\newtheorem{theorem}{Theorem}[section]
\newcommand{\spacedim}{d}
\newcommand{\atom}{\primal} 
\newcommand{\atomopt}{\atom} 
\newcommand{\Atom}{\Primal} 
\newcommand{\AtomicSet}{\Atom_{k}}
\renewcommand{\opt}{^\sharp}
\newcommand{\OptimalSupports}[2]{{\mathcal K}\opt_{#1}\np{#2}}
\renewcommand{\PRIMAL}{\RR^{\spacedim}} 
\renewcommand{\DUAL}{\RR^{\spacedim}} 
\renewcommand{\primalbis}{z}
\renewcommand{\Vset}{\ic{1,{\spacedim}}}
\newcommand{\Vsetzero}{\ic{0,{\spacedim}}}
\newcommand{\BallDual}{\Ball_{\star}}
\renewcommand{\TripleNorm}{\DoubleNorm}
\renewcommand{\TripleNormDual}{\DoubleNormDual}
\renewcommand{\TripleNormBall}{\Ball} 
\renewcommand{\TripleNormDualBall}{\BallDual}
\renewcommand{\TripleNormSphere}{\Sphere}
\newcommand{\TopSupscript}{\top}
\newcommand{\SupportSupscript}{\TopSupscript\!\star} 
\renewcommand{\TopNorm}[2]{{#1}_{(#2)}^{\TopSupscript}}
\renewcommand{\TopDualNorm}[2]{{#1}_{\star,(#2)}^{\TopSupscript}}
\renewcommand{\SupportNorm}[2]{{#1}_{(#2)}^{\SupportSupscript}}
\renewcommand{\SupportDualNorm}[2]{{#1}_{\star,(#2)}^{\SupportSupscript}}
\renewcommand{\LpTopNorm}[3]{\Norm{#1}_{#2,#3}^{\TopSupscript}}
\renewcommand{\LpSupportNorm}[3]{\Norm{#1}_{#2,#3}^{\SupportSupscript}}
\renewcommand{\LpTopBall}[3]{{#1}_{#2,#3}^{\TopSupscript}}
\renewcommand{\LpSupportBall}[3]{{#1}_{#2,#3}^{\SupportSupscript}}
\newcommand{\ProjectionIndexSubset}{\projection_{\IndexSubset}}
\newcommand{\ProjectionIndexSubsetOpt}{\projection_{\IndexSubset\opt}}
\renewcommand{\cardinal}[1]{{\footnotesize\textrm{card}}#1}
\newcommand{\intercardK}{\stackunder{$\bigcap$}{\tiny $\cardinal{K} \leq k$}}
\newcommand{\unioncardK}{\stackunder{$\bigcup$}{\tiny $\cardinal{K} \leq k$}}
\newcommand{\supcardK}{\stackunder{$\mathrm{sup}$}{\tiny $\cardinal{K} \leq k$}}
\title{Geometry of Sparsity-Inducing Norms}
\author[1]{Jean-Philippe Chancelier}
\author[1]{Michel De~Lara}
\author[2]{Antoine Deza}
\author[3]{Lionel~Pournin}
\affil[1]{CERMICS, ENPC, Institut Polytechnique de Paris, CNRS, Marne-la-Vallée, France}
\affil[2]{McMaster University, Hamilton, Ontario, Canada}
\affil[3]{Universit{\'e} Paris 13, Villetaneuse, France}
\begin{document}

\maketitle

\begin{abstract}
  Sparse optimization seeks an optimal solution with few nonzero entries.  To
  achieve this, it is common to add to the criterion a penalty term proportional
  to the $\ell_1$-norm, which is recognized as the archetype of sparsity-inducing
  norms. In this approach, the number of nonzero entries is not controlled a
  priori.  By contrast, in this paper, our motivation is to find an optimal solution
  with at most~$k$ nonzero coordinates (or for short, $k$-sparse vectors), where
  $k$ is a given sparsity threshold (or ``sparsity budget''). For this purpose, we
  study the class of generalized $k$-support dual~norms that arise from any given
  so-called source norm.  When added as a penalty term, we provide conditions under which
  such generalized $k$-support dual~norms promote $k$-sparse solutions.  The result
  follows from an analysis of the exposed faces of closed convex sets generated
  by $k$-sparse vectors, and of how primal support identification can be deduced
  from dual information.  Finally, we study some of the geometric properties of
  the unit balls for the $k$-support dual~norms and their dual norms when the source
  norm belongs to the family of $\ell_p$-norms. In particular, we show a
  striking structural property: every proper face of the unit balls for the
  $k$-support dual~norms is a hypersimplex, i.e., the convex hull of $0/1$-valued
  points with the same $\lzero$-norm. 
\end{abstract}

{{\bf Keywords}: sparsity, $\lzero$~pseudonorm, orthant-monotonicity, top-$k$
  dual~norm, $k$-support dual~norm, hypersimplex}

{{\bf 2020 Mathematics Subject Classification (MSC2020):}
    49N15,  	
    90C25,  	
    52A05,  	
    52A21.  	
  }

\section{Introduction}
\label{CDLDP.sec.0}

After motivating the paper in~\S\ref{Motivation},
we discuss how it relates to the literature in~\S\ref{Related_literature},
and we finally outline the organization of the paper
in~\S\ref{Organization_of_the_paper}. 

\subsection{Motivation}
\label{Motivation}

In 1996, Tibshirani~\cite{Tibshirani:1996} proposed least-squares regression with
an $\ell_1$-norm penalty to achieve sparsity in least-squares minimization.
Figure~\ref{fig:Figure2_Tibshirani} is the replica of
\cite[Figure~2]{Tibshirani:1996}, which provides insight regarding why
corresponding optimal solutions are sparse (we copy the comments
of~\cite[Figure~2]{Tibshirani:1996} with additional precisions in
brackets~[$\cdots$]):

\blockquote{``The elliptical contours of this function [quadratic
criterion] are shown by the full curves in Fig.~2(a); they are centred at the
OLS [optimal least-square] estimates; the constraint region [$\ell_1$-ball in
dimension~2] is the rotated square. The lasso solution is the first place that
the contours touch the square, and this will sometimes occur at a corner,
corresponding to a zero coefficient. The picture for ridge regression is shown
in Fig. 2(b): there are no corners for the contours to hit and hence zero
solutions will rarely result.''}
%
\begin{figure}[ht!]
  \begin{center}
    \includegraphics[width=0.50\textwidth]{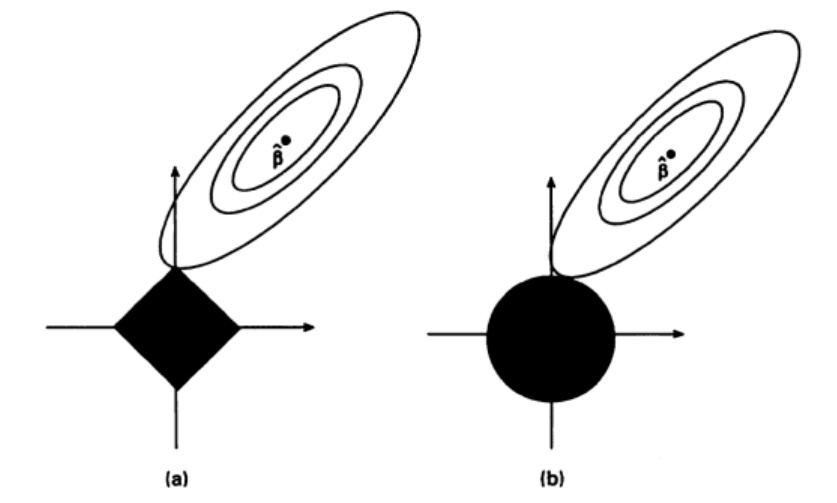}
    \caption{Replica of~\cite[Figure~2]{Tibshirani:1996}
    \label{fig:Figure2_Tibshirani}}
  \end{center}
\end{figure}
Thus, as the kinks of the $\ell_1$-ball are located at sparse points, it is common
to say that the $\ell_1$-norm is {\em sparsity-inducing}.
Figure~\ref{Two_examples_of_unit_balls_with_kinks_located_at_sparse_points}
shows two examples of unit balls with kinks located at sparse points. Both of
them arise from norms that are studied in the sequel.

\begin{figure}[ht!]
        \begin{center}
          \includegraphics[scale=1.3]{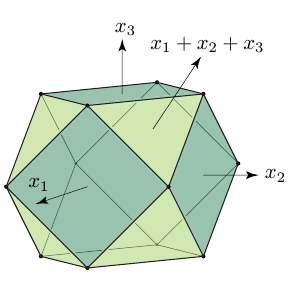}\hspace{5\bigskipamount}
          \includegraphics[scale=1.3]{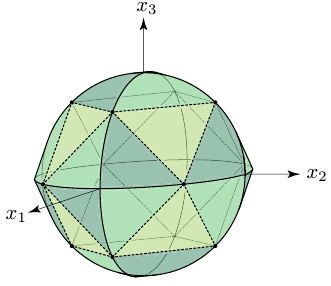}
          \caption{Two examples of unit balls with kinks located at sparse points
            \label{Two_examples_of_unit_balls_with_kinks_located_at_sparse_points}}
        \end{center}
\end{figure}

A natural question that arises from the comments
of~\cite[Figure~2]{Tibshirani:1996} is: What could be mathematical conditions
for inducing sparsity?

Going beyond least-squares regression with an $\ell_1$-norm penalty, one can
consider optimization problems of the form
\( \min_{\primal\in\RR^{\spacedim}} \bp{\fonctionprimal\np{\primal} + \gamma
  \TripleNorm{\primal}} \), where
\( \fonctionprimal : \RR^{\spacedim} \to \RR \) is a smooth convex function,
$\gamma >0$ and $\TripleNorm{\cdot}$ is a norm with unit ball~$\Ball$.  This is the
approach taken in the
papers~\cite{Bach-Jenatton-Mairal-Obozinski:2012,Jenatton-Audibert-Bach:2011},
where the terminology ``sparsity-inducing norm'' has been introduced.  As all
functions take finite values --- and as $\TripleNorm{\cdot}$ is the support
function~$\SupportFunction{\PolarSet{\Ball}}$ of the polar
set~$\PolarSet{\Ball}$ --- a solution\footnote{%
  The superscript~$\opt$ will be used in all the paper to denote an optimal
  solution, that is, a solution to an optimization problem (here, minimization).
  \label{ft:optimal_solution}
}
~$\primal\opt$ of the above problem is
characterized by the Fermat rule
\begin{subequations}
  \begin{equation}
    \label{eq:Fermat_rule}
  0 \in \nabla\fonctionprimal\np{\primal\opt}
  + \gamma \partial\SupportFunction{\PolarSet{\Ball}}\np{\primal\opt}
  \eqfinv  
\end{equation}
where \( \partial\SupportFunction{\PolarSet{\Ball}}\np{\primal\opt} \) is the face
\( \ExposedFace(\PolarSet{\Ball},\primal\opt) \) of the unit ball~$\PolarSet{\Ball}$ exposed
by~\( \primal\opt \).
Thus, the optimality condition reads
\begin{equation}
      \label{eq:Fermat_rule_with_polar_ball}
  -\nabla\fonctionprimal\np{\primal\opt}
  \in \gamma\ExposedFace(\PolarSet{\Ball},\primal\opt)
\end{equation}
and, by polarity~\cite[Theorem~5.1]{Fan-Jeong-Sun-Friedlander:2020}, this is
equivalent to \(\gamma=\TripleNormDual{\nabla\fonctionprimal\np{\primal\opt}}\)
(dual norm) and\footnote{%
If we were to develop an algorithm to find the optimal
  solution~$\primal\opt$,
  this would require knowing the exposed faces of the unit
  ball~$\PolarSet{\Ball}$,
  as in~\eqref{eq:Fermat_rule} and \eqref{eq:Fermat_rule_with_polar_ball}.
However, this paper only studies the exposed faces of the unit
  ball~$\Ball$, as in~\eqref{eq:Fermat_rule_with_original_ball}.}  
\begin{equation}
       \label{eq:Fermat_rule_with_original_ball}
 \frac{\primal\opt}{\TripleNorm{\primal\opt}} \in \ExposedFace(\Ball, -\nabla\fonctionprimal\np{\primal\opt})
  \eqfinp
\end{equation}
\end{subequations}
One could say that the considered norm is {\em sparsity-inducing} if information
about the support of~$\primal\opt$ could be obtained from information about the
exposed faces \( \ExposedFace(\Ball, -\nabla\fonctionprimal\np{\primal\opt}) \) of
the unit ball~$\Ball$ for that norm. This is the approach that we consider in
this paper. More precisely, we analyze the exposed faces of some special convex
sets, and in particular of the unit balls of certain norms, and relate them to
sparsity.

\subsection{Related literature}
\label{Related_literature}

Our work is related to different trends in the literature.
As said above, the terminology ``sparsity-inducing norm'' has been introduced in
the
papers~\cite{Bach-Jenatton-Mairal-Obozinski:2012,Jenatton-Audibert-Bach:2011},
which focus on algorithmic issues, whereas we focus on geometric aspects.  In
particular, we study how the gradient --- at a solution, of the original smooth
function to be minimized --- provides relevant (dual) information about the
sparsity of the (primal) solution.
\medskip

Sparsity is also examined for the solutions of undetermined linear systems,
and we emphasize the three papers
\cite{Candes:2008,Chandrasekaran-Recht-Parrilo-Willsky:2012,Boyer-Chambolle-DeCastro-Duval-deGournay-Weiss:2019}.

The paper~\cite{Candes:2008} studies the solutions of an undetermined linear
system in the context of compressed sensing; it provides a sufficient property
of the sensing matrix, the restricted isometry property, which ensures that the
minimal $\ell_1$-norm solution coincides with the sparse solution.

In~\cite{Chandrasekaran-Recht-Parrilo-Willsky:2012}, it is explained how to
design so-called ``atomic norms'' which promote sparsity, but with respect to a
given (compact) atomic set: the convex closure of the set of atoms is the unit
ball of the atomic norm, and the authors stress that it is ``the favorable facial structure of the atomic norm ball
that makes the atomic norm a suitable convex heuristic to recover simple
models''.  The norms that we present in
Sect.~\ref{The_case_of_generalized_top-k_and_k-support_norms} are atomic norms
where the atomic set is especially designed to provide solutions with an a
priori given ``sparsity budget'' (number of nonzero entries bounded above by a
given integer). 
More precisely, the atomic set that we consider is as follows:
we project the unit ball of a given so-called source norm onto the
different subspaces of $k$-sparse vectors.
Our main contribution is to focus on the geometric description of the facial structure of
the unit balls of these special atomic norms, in relation to the facial
structure of the unit ball of the source norm.
As said above, we study in
particular how a gradient provides relevant (dual) information about sparsity of
the (primal) solution. By contrast,
\cite{Chandrasekaran-Recht-Parrilo-Willsky:2012} focuses more on measuring the
Gaussian width of the tangent cones as a way to achieve more or less
sparsity.

The paper \cite{Boyer-Chambolle-DeCastro-Duval-deGournay-Weiss:2019} also
stresses the role of faces in identifying solutions of undetermined linear
systems that can be expressed as convex combinations of a small number of atoms.
Where \cite{Boyer-Chambolle-DeCastro-Duval-deGournay-Weiss:2019} studies faces,
we focus on \emph{exposed} faces and on how dual information is related to
sparsity of a primal solution.
\medskip

Let us mention three more works, and how they each connect to the present paper.

The question of decomposing a vector as a convex conical combination of
elementary atoms has been studied in~\cite{Fan-Jeong-Sun-Friedlander:2020}, with
a special role given to the so-called alignment, that is, to normal cones and
exposed faces. Our approach intersects that of
\cite{Fan-Jeong-Sun-Friedlander:2020}, but with a focus on the classic sparsity
along coordinate axis and with the goal to describe the geometry of some unit
balls \cite{Chancelier-DeLara:2022_OSM_JCA}.

In~\cite{Fadili-Malick-Peyre:2018}, the focus is put on stratifying the primal
space. Then, an optimal primal-dual pair
\( \np{\primal\opt,\nabla\fonctionprimal\np{\primal\opt}} \) can provide information on the
strata to which~$\primal\opt$ belongs. General regularizers (and not only norms)
are studied.
To connect with our work, we should examine the notion mirror-stratification in
the case of norms, and check whether the exposed faces and normal cones lattices
are in a mirror-stratificable conjugacy. This analysis goes beyond the current
paper. 

%
Convex structured sparsity with norms is the object
of~\cite{Obozinski-Bach:hal-01412385}.
From a combinatorial function (that maps support of vectors onto extended real
numbers) and a \( \ell_p \)-norm, the authors present a general way to produce
a new norm \cite[Equation~(2)]{Obozinski-Bach:hal-01412385}. Then, they study properties of this class of norms, relate them to
well-known norms (by varying the combinatorial function), display visual
representation of the exposed faces of some unit balls \cite[Figures~5 and~6]{Obozinski-Bach:hal-01412385}, propose
variational representations \cite[Section~5]{Obozinski-Bach:hal-01412385} and algorithms to compute proximal
operators \cite[\S~6.3]{Obozinski-Bach:hal-01412385}.
They also provide a probabilistic ``support recovery'' result \cite[Proposition~7]{Obozinski-Bach:hal-01412385}.
In our paper, from a given ``sparsity budget'' (an integer less than the
ambient dimension) and \emph{any} norm, we present a general way to produce
a new norm (\S\ref{Face_characterization_and_support_identification}).
So, compared to~\cite{Obozinski-Bach:hal-01412385}, we are less
general in that our sparsity budget~$k$ is a special case of combinatorial function
--- one that attributes the value~1 to any support with cardinality less
than~$k$, 0 to the empty set, and $+\infty$ else (see
\cite[Illustration~2]{Obozinski-Bach:hal-01412385} for a related expression) ---
but we are more general in that we start with \emph{any} norm, and do not
restrict to \( \ell_p \)-norms (although we provide a more thorough analysis in
Sect.~\ref{Geometry_of_lptopnorm{q}{k}_and__lpsupportnorm{p}{k}s_unit_balls} in
the case of \( \ell_p \)-norms).
We study exposed faces and how they relate to sparsity
(Theorem~\ref{th:Support_identification},
Proposition~\ref{pr:Support_identification_generalized_k-support_norm})
with a special emphasis the case of orthant-monotonic norms
(Proposition~\ref{pr:Support_identification_generalized_k-support_norm_orthant-monotonic})
and orthant-strictly monotonic norms
(Proposition~\ref{pr:Support_identification_generalized_k-support_norm_orthant-strictly_monotonic}).
This is the first main difference with~\cite{Obozinski-Bach:hal-01412385}:
we focus on the geometric analysis of exposed faces of new balls obtained from
any norm, in relation to a given sparsity budget, whereas
\cite{Obozinski-Bach:hal-01412385} is less focused on the geometry of faces
\cite[Figures~5 and~6]{Obozinski-Bach:hal-01412385}, but more motivated by
algorithmic preoccupations.
The second main difference with~\cite{Obozinski-Bach:hal-01412385} relates to
support recovery/identification.
\cite[Proposition~7]{Obozinski-Bach:hal-01412385}
provides a probabilistic ``support recovery'' result in the case of regularized
least-squares minimization.
By contrast, one of our main results, Theorem~\ref{th:support_identification},
is a (deterministic) ``support
identification'' one: it provides a condition
(Equation~\eqref{eq:support_identification_unique_a}) under which the solution
of an optimization problem with generalized $k$-support dual~norm penalty has an optimal
solution with support of cardinality less than~$k$
(Equation~\eqref{eq:support_identification_unique_b}).
\medskip

Finally, \cite{Candes:ICM2014} presents compressive sensing and convex sparsity-promoting methods, 
whereas \cite{Drusvyatskiy-Vavasis-Wolkowicz:2015} studies the geometry of the rank–sparsity ball providing complementary 
	rank–sparsity trade-off results.
\medskip

To summarize, to the difference with the literature mentioned above, 
we focus on sparsity-inducing norms that promote solutions within an a priori ``sparsity budget'' by
using dual information, and our analysis is mostly geometric.

\subsection{Organization of the paper}
\label{Organization_of_the_paper}

The paper is organized as follows.

In Sect.~\ref{Support_identification}, after providing background on sparsity
and on faces of closed convex sets, we define the \emph{Sparse Projection and
  Convexification (SPaC)} method, a systematic way to generate, from a given
(nonempty) set~\( \Primal \),
a closed convex set whose extreme points are $k$-sparse,
called the \emph{$k$-SPaC hull of~\( \Primal \)}.
More precisely, we project the set~\( \Primal \) onto the
different subspaces of $k$-sparse vectors, and then we take the closed convex
closure of this union of projections. As it is well known that this last operation
does not generate extreme points outside this union, we obtain that the resulting 
closed convex set, the $k$-SPaC hull of~\( \Primal \), has extreme points that
are $k$-sparse\footnote{%
This operation --- which consists in \emph{projecting} the set~\( \Primal \)
onto all the subspaces made of $k$-sparse vectors, and then take the closed
convex closure --- is not as natural as \emph{intersecting} the set~\( \Primal \) with all 
$k$-sparse vectors, and then take the closed convex closure.
Both operations lead to closed convex sets whose (different) extreme points are $k$-sparse.
However, the SPaC method is the most adequate to state our main result.}.
Then, we expose our main result: we show that any face of a $k$-SPaC hull, exposed by a given dual vector, is
obtained by projections --- onto a specific selection (depending on the dual vector)
of subspaces of $k$-sparse vectors --- of the convex closure of the original
set.
This specific selection is the basis for support selection
results.

In Sect.~\ref{The_case_of_generalized_top-k_and_k-support_norms}, we recall the
definition of generalized $k$-support dual~norms. Their unit balls appear to be 
produced exactly as above, that is, are $k$-SPaC hulls of a given source unit ball. Thus, we are
able to describe the exposed faces of the unit ball of $k$-support dual~norms as projections,
onto a specific selection of subspaces of $k$-sparse vectors, of the original
unit ball.
As an application we return to the original motivation of the paper as was exposed
in~\S\ref{Motivation}, and we provide dual conditions under which the
primal optimal solution of a minimization problem, penalized by a $k$-support dual~norm, is $k$-sparse.
Then, we recall the notions of orthant-monotonic and orthant-strictly
monotonic norms; for such source norms, we obtain a more precise description of the intersection of the
$k$-sparse vectors with the exposed faces of the resulting $k$-support dual~norm.

In Sect.~\ref{Geometry_of_lptopnorm{q}{k}_and__lpsupportnorm{p}{k}s_unit_balls},
we examine the geometric aspects of the faces and normal cones of the unit balls
of \lpsupportnorm{p}{k}s and \lptopnorm{q}{k}s --- that are the generalized
$k$-support dual~norms of
Sect.~\ref{The_case_of_generalized_top-k_and_k-support_norms},
and their dual norms, in the case where the source norm is one of the $\ell_p$
norms (and $1/p+1/q=1$).
First, we recall known results for the case $p=+\infty$, where the unit
balls are polytopal and therefore all faces are exposed. Second, we present new
results for the case $1<p<+\infty$. In particular, we show that every proper
face, exposed or not, of the unit ball of the $k$-support dual~norms is a
hypersimplex, i.e, the convex hull of $0/1$-valued points with the same
$\lzero$-norm (see~Equation~\eqref{eq:lzeropseudonorm}).

Long proofs are relegated to Appendix~\ref{Proofs}.

\section{Exposed faces of $k$-SPaC hulls}
\label{Support_identification}

In~\S\ref{Background}, we provide background on sparsity and on exposed faces of
closed convex sets.
In~\S\ref{Faces_of_compact_convex_sets_with}, we define $k$-SPaC hulls
(closed convex sets whose extreme points are $k$-sparse),
and then we state our main result: we characterize the exposed faces of $k$-SPaC
hulls. We also deduce support identification.
Proofs are given in Appendix~\ref{Proofs}.

\subsection{Background on sparsity and on exposed faces of closed convex sets}
\label{Background}

Let ${\spacedim} \in \NN^*$ be a natural number.
We consider the finite-dimensional real Euclidean vector space~$\RR^{\spacedim}$
equipped with the scalar product~\( \proscal{\,}{} \).

\subsubsection*{Background on sparsity}
\label{Background_on_sparsity}

We use the notation \( \ic{j,k}=\na{j, j+1,\ldots,k-1,k} \) for any pair of integers
such that \( j \leq k \).
Denoting by $\cardinal{\IndexSubset}$ the cardinality of a subset \( \IndexSubset
\subset \Vset \), we define, for any vector~$\primal$ in $\RR^{\spacedim}$,
the \emph{support} of~\( \primal \) by
\begin{subequations}
  \begin{align}
    \Support{\primal}
    &=
      \bsetco{ j \in \Vset }{\primal_j \not= 0 } \subset \Vset
      \eqsepv
      \intertext{and the  \emph{\lzeropseudonorm} of~\( \primal \) by the number
      of nonzero components, that is, by}
      \lzero\np{\primal}
    &=
      \cardinal{~\Support{\primal}} = \sum_{j \in \Vset }\1_{\primal_j \not= 0 } 
      \eqfinp
      \label{eq:lzeropseudonorm}
  \end{align}
\end{subequations}
\begin{subequations}
  This defines the \emph{\lzeropseudonorm} function
  \( \lzero : \RR^{\spacedim} \to \Vsetzero \). For any $k$ in $\Vset$, we
  denote its level sets, made of all the vectors with at most~$k$ nonzero
  coordinates, by
  \begin{equation}
    \LevelSet{\lzero}{k} = \nsetco{ \primal \in \RR^{\spacedim}}{\lzero\np{\primal} \leq k}
  \end{equation}  
  The vectors in \( \LevelSet{\lzero}{k} \) will be called \emph{$k$-sparse
    vectors}
  (so they can have exactly $k$~nonzero entries, but they can also have strictly
  less than $k$~nonzero entries).
  For any subset \( \IndexSubset\) of \( \Vset \), we introduce 
  the subspace $\FlatRR_{\IndexSubset}$ of~$\RR^{\spacedim}$ made of vectors
  whose components vanish outside of~$\IndexSubset$ as
  \begin{equation}
    \label{eq:FlatRR}
    \FlatRR_{\IndexSubset} =
    \bsetco{ \primal \in \RR^{\spacedim} }%
    { \primal_j=0 \eqsepv \forall j \not\in \IndexSubset }
  \end{equation} 
  with the convention that \( \FlatRR_{\emptyset}= \na{0} \).
  Using \( \bigcup_{\cardinal{\IndexSubset}\leq k} \) as a shorthand for 
  \( \bigcup_{ \IndexSubset \subset\Vset, \cardinal{\IndexSubset}\leq k} \),
  we get
  \begin{equation} 
    {\LevelSet{\lzero}{k}}   =
    \bigcup_{\cardinal{\IndexSubset}\leq k}\FlatRR_{\IndexSubset}
    \eqfinp
    \label{eq:LevelSet_lzero_FlatRR_IndexSubset}
  \end{equation}
\end{subequations}

We denote by
\( \ProjectionIndexSubset : \RR^{\spacedim} \to \FlatRR_{\IndexSubset} \) the
\emph{orthogonal projection mapping}; for any vector \( \primal \) in
\( \RR^{\spacedim} \), the coordinates of the vector
\( \ProjectionIndexSubset\primal \in \FlatRR_{\IndexSubset} \) coincide with those
of~\( \primal \), except for the ones whose indices range outside
of~$\IndexSubset$ that are equal to zero.
The orthogonal projection
mapping~$\ProjectionIndexSubset$ is \emph{self-adjoint} (or \emph{self-dual}),
that is,
\begin{equation}
  \proscal{\ProjectionIndexSubset\primal}{\dual}
  =
  \proscal{\primal}{\ProjectionIndexSubset\dual}
  =
  \proscal{\ProjectionIndexSubset\primal}{\ProjectionIndexSubset\dual}
  \eqsepv 
  \forall \primal \in \RR^{\spacedim} 
  \eqsepv 
  \forall \dual \in \RR^{\spacedim} 
  \eqfinp
  \label{eq:orthogonal_projection_self-dual}
\end{equation}

\subsubsection*{Background on exposed faces of closed convex sets}

For any subset \( \Primal\subset\PRIMAL \), the expression 
\begin{equation}
  \SupportFunction{\Primal}\np{\dual} = 
  \sup_{\primal\in\Primal} \proscal{\primal}{\dual}
  \eqsepv \forall \dual \in \DUAL
  \label{eq:support_function}
\end{equation}
defines a map \( \SupportFunction{\Primal} : \DUAL \to \barRR\) called the 
\emph{support function\footnote{%
    Note that the support \emph{function} has nothing to do with the support of a \emph{vector}.}
  of the subset~$\Primal$}.
The (negative) \emph{polar set}~$\PolarSet{\Primal}$ 
of the subset~$\Primal \subset \PRIMAL$ is the closed convex set
\begin{equation}
  \PolarSet{\Primal}=
  \bsetco{ \dual \in \DUAL }{  \nscal{\primal}{\dual} \leq 1
    \eqsepv \forall \primal \in \Primal }
  =  \MidLevelSet{\SupportFunction{\Primal}}{1}
  \eqfinp
\label{eq:PolarSet}
\end{equation}

The face of a nonempty closed convex subset $\Convex$ of $\PRIMAL$ exposed by a
dual vector~$\dual$ in $\DUAL$ is
\begin{equation}
  \ExposedFace(\Convex,\dual) 
  =\argmax_{\primal\in\Convex} \proscal{\primal}{\dual}\eqfinv
  \label{eq:ExposedFace}
\end{equation}
and the normal cone~$\NormalCone(\Convex,\primal)$ of~$\Convex$ at a primal
vector~\( \primal\in\Convex \) is defined by the conjugacy relation
\begin{equation}
  \primal \in \Convex \mtext{ and } \dual \in \NormalCone(\Convex,\primal) 
  \iff 
  \primal \in \ExposedFace(\Convex,\dual) 
  \eqfinp
\end{equation}

\subsection{Sparse projection and convexification}
\label{Faces_of_compact_convex_sets_with}

As discussed in the introduction, the intuition behind
\cite[Figure~2]{Tibshirani:1996} is that the unit ball of a sparsity-inducing
norm should have extreme points (vertices) precisely at $k$-sparse vectors.  One
way to enforce this is to select a suitable subset
of $k$-sparse vectors, and then take the convex closure,
as described in Definition~\ref{de:SPaC}
of what we call \emph{Sparse Projection and Convexification}.
Then, Theorem~\ref{th:Support_identification} characterizes the exposed faces
of this convex closure.

\begin{definition}(Sparse Projection and Convexification)
  \label{de:SPaC}
  Let \( k\in\Vset \) be a natural number and
  \( \Primal \subset \PRIMAL\) be a (primal) nonempty set.
  The \emph{Sparse Projection and Convexification (SPaC)} method
  consists in projecting the set~\( \Primal \) onto all $k$-sparse subspaces~\(
  \FlatRR_{\IndexSubset} \) in~\eqref{eq:FlatRR} with \(
  \cardinal{\IndexSubset}\leq k \), and obtain the following set~$\AtomicSet$
  made of $k$-sparse vectors, that is,
  \begin{subequations}
    \begin{equation}
    \AtomicSet 
    = \bigcup_{\cardinal{\IndexSubset}\leq k}
    \ProjectionIndexSubset\np{\Primal}
    \subset  {\LevelSet{\lzero}{k}} 
    \eqfinv 
    \label{eq:sparse_atomic_set} 
  \end{equation}
  and its closed convex hull, called the \emph{$k$-SPaC hull of~\( \Primal \)}, 
    \begin{equation}
    \closedconvexhull\AtomicSet 
    = \closedconvexhull \bp{\bigcup_{\cardinal{\IndexSubset}\leq k}
      \ProjectionIndexSubset\np{\Primal} }
    \eqfinp
    \label{eq:SPaC_hull}
  \end{equation}    
  \end{subequations}
\end{definition}

Theorem~\ref{th:Support_identification} below characterizes the exposed faces
of the $k$-SPaC hull.

\begin{theorem}[Characterization of exposed faces of $k$-SPaC hulls]
  \label{th:Support_identification}
  Let \( \dual \in \DUAL \) be a (dual) vector.
  We set\footnote{%
    See Footnote~\ref{ft:optimal_solution} for the notation~$\opt$.}
  \begin{equation}
    \OptimalSupports{\Primal,k}{\dual} =
    \argmax_{\substack{\IndexSubset \subset\Vset \\ \cardinal{\IndexSubset}\leq k}}   
    \SupportFunction{\closedconvexhull\Primal}\np{\ProjectionIndexSubset\dual}
    \eqfinv
    \label{eq:OptimalSupports}
  \end{equation}
which is the nonempty subset of all subsets \( \ba{ \IndexSubset \subset\Vset,
  \cardinal{\IndexSubset}\leq k} \) of indices that achieve the maximum of
\( \IndexSubset \mapsto
\SupportFunction{\closedconvexhull\Primal}\np{\ProjectionIndexSubset\dual} \).
  %
  Then, 
  the exposed faces of the $k$-SPaC hull~$\closedconvexhull\AtomicSet$ in~\eqref{eq:SPaC_hull}
  are related to the exposed faces of~$\closedconvexhull\Primal$ by 
  \begin{equation}
    \label{eq:Support_identification}
    \AtomicSet
    \cap
    \ExposedFace\bp{\overbrace{\closedconvexhull\AtomicSet}^{\substack{\textrm{new closed}\\\textrm{convex set:}\\\textrm{$k$-SPaC hull}}},\dual}
    = 
    \Bsetco{\underbrace{\ProjectionIndexSubsetOpt}_{\textrm{projection}}\bp{\Primal \cap
        \ExposedFace(\overbrace{\closedconvexhull\Primal}^{\substack{\textrm{original
              closed}\\\textrm{convex set}}},
        \underbrace{\ProjectionIndexSubsetOpt\dual}_{\textrm{projection}})}}
    {\underbrace{\IndexSubset\opt \in
        \OptimalSupports{\Primal,k}{\dual}}_{\textrm{projection selection}}}
    \eqfinv
  \end{equation}
  and by
  \begin{equation}
    \label{eq:Face_description}
\underbrace{
  \ExposedFace\bp{\overbrace{\closedconvexhull\AtomicSet}^{\substack{\textrm{new
          closed}\\\textrm{convex set:}\\\textrm{$k$-SPaC hull}}},\dual}
}_{\textrm{exposed face of~$\closedconvexhull\AtomicSet$}}
    = 
    \closedconvexhull
    \Bsetco{\underbrace{\ProjectionIndexSubsetOpt}_{\textrm{projection}}\bp{\Primal \cap
        \underbrace{
          \ExposedFace(\overbrace{\closedconvexhull\Primal}^{\substack{\textrm{original
              closed}\\\textrm{convex set}}},
        \underbrace{\ProjectionIndexSubsetOpt\dual}_{\textrm{projection}})}_{\textrm{exposed
          face of~$\closedconvexhull\Primal$}}}
    }
    {\underbrace{\IndexSubset\opt \in
        \OptimalSupports{\Primal,k}{\dual}}_{\textrm{projection selection}}}
    \eqfinp
  \end{equation}
%
\end{theorem}
By construction, the extreme points of \(
\closedconvexhull\AtomicSet \) are $k$-sparse vectors:
indeed, the extreme points of \( \closedconvexhull\AtomicSet \) are contained in \(
\AtomicSet \), while \(
\AtomicSet \) itself is a subset of \( \bigcup_{\cardinal{\IndexSubset}\leq k}
\FlatRR_{\IndexSubset} =  {\LevelSet{\lzero}{k}} \) (see
Equation~\eqref{eq:sparse_atomic_set}), hence is made of 
$k$-sparse vectors. 

As a corollary of Theorem~\ref{th:Support_identification}, we obtain support identification as follows.

\begin{corollary}[Support identification]
  \label{cor:Support_identification}
  Under the assumptions of Theorem~\ref{th:Support_identification},
  \begin{subequations}
  \begin{align}
    \primal \in \AtomicSet \cap \ExposedFace(\closedconvexhull\AtomicSet,\dual)    
    & \implies
      \Support{\primal} \in \OptimalSupports{\Primal,k}{\dual}
      \eqfinv
      \label{eq:Support_identification_Support_atom_opt}
    \\
    \primal \in \ExposedFace(\closedconvexhull\AtomicSet,\dual)
    & \implies
      \Support{\primal} \subset \bigcup_{ \IndexSubset\opt \in
      \OptimalSupports{\Primal,k}{\dual} }
      \IndexSubset\opt
      \eqfinp 
      \label{eq:Support_identification_Support_primal_opt}
  \end{align}    
  \end{subequations}
\end{corollary}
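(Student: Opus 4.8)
The plan is to derive both implications directly from the face description in Theorem~\ref{th:Support_identification}, using only elementary properties of supports and orthogonal projections. First I would establish the following basic observation about projections: for any subset $\IndexSubset \subset \Vset$ and any vector $\primal \in \PRIMAL$, one has $\Support{\ProjectionIndexSubset\primal} \subset \IndexSubset$, because the projection zeroes out all coordinates outside $\IndexSubset$. Consequently, for any subset $W \subset \FlatRR_{\IndexSubset}$, every element of $W$ has support contained in $\IndexSubset$; and since a convex combination of vectors has support contained in the union of their supports (a coordinate can only be nonzero in the combination if it is nonzero in at least one summand), the same containment $\Support{\primal} \subset \IndexSubset$ holds for every $\primal$ in $\closedconvexhull W$, and by closure for every $\primal$ in $\closedconvexhull W$ as well, since the support of a limit point is contained in the union of supports along any sequence — more carefully, $\Support{\cdot}$ is not lower semicontinuous, but the containment in a \emph{fixed} coordinate subspace $\FlatRR_{\IndexSubset}$ \emph{is} preserved under limits because $\FlatRR_{\IndexSubset}$ is closed.

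Next, for the first implication~\eqref{eq:Support_identification_Support_atom_opt}, I would take $\primal \in \AtomicSet \cap \ExposedFace(\closedconvexhull\AtomicSet,\dual)$. By Equation~\eqref{eq:Support_identification} of Theorem~\ref{th:Support_identification}, there exists $\IndexSubset\opt \in \OptimalSupports{\Primal,k}{\dual}$ with $\primal \in \ProjectionIndexSubsetOpt\bp{\Primal \cap \ExposedFace(\closedconvexhull\Primal,\ProjectionIndexSubsetOpt\dual)}$. This already gives $\Support{\primal} \subset \IndexSubset\opt$ by the projection observation above. To upgrade the inclusion $\Support{\primal} \subset \IndexSubset\opt$ to the \emph{equality} $\Support{\primal} = \IndexSubset\opt$ needed to conclude $\Support{\primal} \in \OptimalSupports{\Primal,k}{\dual}$, I would need an argument showing that the optimal supports can be taken to be exactly realized, i.e.\ that $\IndexSubset\opt$ contains no "superfluous" indices — and this is precisely where I expect the main obstacle to lie. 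One natural route: observe that if $j \in \IndexSubset\opt$ but $\primal_j = 0$, then $\projection_{\IndexSubset\opt \setminus \{j\}}\primal = \primal$, and one should be able to show $\IndexSubset\opt \setminus \{j\}$ is also optimal in~\eqref{eq:OptimalSupports}, so that \emph{some} optimal support equals $\Support{\primal}$ exactly; carrying this out requires checking that $\SupportFunction{\closedconvexhull\Primal}\np{\projection_{\IndexSubset\opt \setminus \{j\}}\dual} = \SupportFunction{\closedconvexhull\Primal}\np{\ProjectionIndexSubsetOpt\dual}$, which should follow from $\primal$ attaining the maximum and lying in the smaller subspace, together with the self-adjointness~\eqref{eq:orthogonal_projection_self-dual} of the projections. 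If instead the intended reading is that $\OptimalSupports{\Primal,k}{\dual}$ is already downward-closed enough, or that membership is meant up to this kind of reduction, I would state that explicitly; in any case the inclusion direction is immediate and the reverse requires this minimality bookkeeping.

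Finally, for the second implication~\eqref{eq:Support_identification_Support_primal_opt}, I would take $\primal \in \ExposedFace(\closedconvexhull\AtomicSet,\dual)$ and apply Equation~\eqref{eq:Face_description}: $\primal$ lies in the closed convex hull of $\bigcup_{\IndexSubset\opt \in \OptimalSupports{\Primal,k}{\dual}} \ProjectionIndexSubsetOpt\bp{\Primal \cap \ExposedFace(\closedconvexhull\Primal,\ProjectionIndexSubsetOpt\dual)}$, which is a subset of $\FlatRR_{\IndexSubsetUnion}$ where $\IndexSubsetUnion = \bigcup_{\IndexSubset\opt \in \OptimalSupports{\Primal,k}{\dual}} \IndexSubset\opt$, since each projection $\ProjectionIndexSubsetOpt$ has image in $\FlatRR_{\IndexSubset\opt} \subset \FlatRR_{\IndexSubsetUnion}$. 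As $\FlatRR_{\IndexSubsetUnion}$ is a closed subspace, taking convex hulls and closures keeps $\primal$ inside it, whence $\Support{\primal} \subset \IndexSubsetUnion$, which is exactly~\eqref{eq:Support_identification_Support_primal_opt}. This direction is routine once the projection-and-subspace lemma from the first paragraph is in hand; no further obstacle is expected here.
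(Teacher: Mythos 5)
Your handling of the second implication is exactly the paper's argument: apply \eqref{eq:Face_description}, observe that every generating set $\ProjectionIndexSubsetOpt\bp{\Primal \cap \ExposedFace(\closedconvexhull\Primal,\ProjectionIndexSubsetOpt\dual)}$ lies in the closed subspace $\FlatRR_{\IndexSubset\opt}$, hence the closed convex hull lies in $\FlatRR_{L}$ with $L$ the union of the optimal supports. No issue there. On the first implication you go further than the paper, and rightly so. The paper's own proof derives from \eqref{eq:Support_identification} only that $\Support{\primal}\subset\IndexSubset\opt$ for some $\IndexSubset\opt\in\OptimalSupports{\Primal,k}{\dual}$ — i.e.\ it proves the inclusion version and treats that as establishing the membership $\Support{\primal}\in\OptimalSupports{\Primal,k}{\dual}$ asserted in \eqref{eq:Support_identification_Support_atom_opt}. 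You correctly identify that the literal statement needs an upgrade, and the reduction you sketch does close the gap; it is cleanest in one step with $J=\Support{\primal}$ rather than deleting indices one at a time. Writing $\primal=\ProjectionIndexSubsetOpt\primalbis$ with $\primalbis\in\Primal\cap\ExposedFace(\closedconvexhull\Primal,\ProjectionIndexSubsetOpt\dual)$, one has $\pi_{J}\primalbis=\pi_{J}\ProjectionIndexSubsetOpt\primalbis=\pi_{J}\primal=\primal$ since $J\subset\IndexSubset\opt$, whence by self-adjointness
\[
\SupportFunction{\closedconvexhull\Primal}\np{\pi_{J}\dual}\;\geq\;\proscal{\primalbis}{\pi_{J}\dual}\;=\;\proscal{\primal}{\dual}\;=\;\proscal{\primalbis}{\ProjectionIndexSubsetOpt\dual}\;=\;\SupportFunction{\closedconvexhull\Primal}\np{\ProjectionIndexSubsetOpt\dual}\eqfinv
\]
and since $\cardinal{J}\leq k$ makes $J$ feasible in \eqref{eq:OptimalSupports}, the reverse inequality is automatic, so $J\in\OptimalSupports{\Primal,k}{\dual}$. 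In short: your plan, once this computation is filled in, proves the corollary as stated, and is more careful than the paper's proof, which only establishes the containment of the support in some optimal $\IndexSubset\opt$.
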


\section{Exposed faces of unit balls of 
  $k$-support dual~norms}
\label{The_case_of_generalized_top-k_and_k-support_norms}

In~\S\ref{Geometry_of_the_unit_balls_of_generalized_top-k_and_k-support_dual_norms},
  we provide background on generalized top-$k$ and $k$-support dual~norms.
In~\S\ref{Face_characterization_and_support_identification}, we apply
Theorem~\ref{th:Support_identification} with (primal) set the unit ball
of a norm, and obtain thus a characterization of the exposed faces of the unit
ball of $k$-support dual~norms.
In~\S\ref{Finding_optimal_solutions_with_at_most_k_nonzero_coordinates}, we
return to the original motivation of the paper as was exposed
in~\S\ref{Motivation}: we provide dual conditions under which the
primal optimal solution of a minimization problem, penalized by a $k$-support dual~norm, is $k$-sparse.
In~\S\ref{The_orthant-monotonic_case}, we recall the notion of orthant-monotonic
norm and, in this case, we obtain a simpler characterization of the 
$k$-sparse vectors in the exposed faces of the unit
ball of $k$-support dual~norms.
Finally, in~\S\ref{The_orthant-strictly_monotonic_case}, we recall the notion of
orthant-strictly monotonic norm and, in this case, we obtain an even simpler characterization of the 
$k$-sparse vectors in the exposed faces of the unit
ball of $k$-support dual~norms.
\medskip

We recall that ${\spacedim} \in \NN^*$ is a natural number, and that 
we consider the finite-dimensional real Euclidean vector space~$\RR^{\spacedim}$
equipped with the scalar product~\( \proscal{\,}{} \).

\subsection{Background on generalized top-$k$ and $k$-support dual~norms}
\label{Geometry_of_the_unit_balls_of_generalized_top-k_and_k-support_dual_norms}

We provide background on generalized top-$k$ and $k$-support dual~norms
that are constructed by means of a source norm.
These families of norms have been introduced
in~\cite{Chancelier-DeLara:2022_SVVA} in relation to hidden convexity in the
\lzeropseudonorm.
In what follows, we will observe that the unit ball of the generalized $k$-support dual~norm is exactly
the $k$-SPaC of the unit ball of the source norm, as introduced in Definition~\ref{de:SPaC}.

In the following, the symbol~$\star$ in the superscript indicates that the generalized
$k$-support dual~norm \( \SupportDualNorm{\TripleNorm{\cdot}}{k} \) is the dual norm
of the generalized top-$k$ dual norm \( \TopDualNorm{\TripleNorm{\dual}}{k} \)
and, thus, is a norm on the primal space. To stress the point, we use~$\primal$
for a primal vector, like in~\( \SupportDualNorm{\TripleNorm{\primal}}{k} \),
and~$\dual$ for a dual vector, like
in~\( \TopDualNorm{\TripleNorm{\dual}}{k} \).
\medskip

The following recalls can also be found in
Table~\ref{tab:generalized_support_norms}
in~\S\ref{Summary_table_on_generalized_top-k_and_k-support_norms}, 
which provides a summary of generalized top-$k$ and $k$-support norms and dual~norms
that appeared in \cite{Chancelier-DeLara:2022_OSM_JCA,
  Chancelier-DeLara:2022_SVVA}.

\begin{definition}[\cite{Chancelier-DeLara:2022_SVVA}]
  \label{de:top_norm}
  Let $\TripleNorm{\cdot}$ be a (source) norm on~$ \RR^{\spacedim}$, with unit ball~$\TripleNormBall$.
  The unit
  ball~\( \TripleNormDualBall \) of the dual norm~\( \TripleNormDual{\cdot} \) is
  the polar set of~$\TripleNormBall$, that is,
  \( \TripleNormDualBall = \PolarSet{\TripleNormBall} \) as defined
  in~\eqref{eq:PolarSet}.
  For any \( k \in \Vset \), and using \(\sup_{\cardinal{\IndexSubset} \leq k} \) as a
  shorthand for
  \( \sup_{ { \IndexSubset \subset \ic{1,d}, \cardinal{\IndexSubset} \leq k}}\), we call
  \begin{itemize}
  \item[$(i)$] 
    \emph{generalized top-$k$ dual~norm} 
    the norm defined by~\cite[Eq.~(10)]{Chancelier-DeLara:2022_SVVA}
    \begin{equation}
      \TopDualNorm{\TripleNorm{\dual}}{k}
      =
      \underbrace{  \sup_{\cardinal{\IndexSubset} \leq k}
        \TripleNormDual{\overbrace{\ProjectionIndexSubset\np{\dual}}%
          ^{\substack{\textrm{$k$-sparse}\\ \textrm{projection} \\ \textrm{on $\FlatRR_{\IndexSubset}$}}}}
      }_{\substack{\textrm{exploring all}\\ \textrm{$k$-sparse projections}\\
        \textrm{and keeping the one} \\ \textrm{with largest dual norm}}}
      \eqsepv \forall \dual \in \RR^{\spacedim}
      \eqfinv 
    \end{equation}
    whose unit ball is 
    \begin{equation}
      \TopDualNorm{\TripleNormBall}{k} 
      =    \bsetco{\dual \in \RR^{\spacedim}}{\TopDualNorm{\TripleNorm{\dual}}{k} \leq 1} 
      =  \bigcap_{\cardinal{\IndexSubset} \leq k}
       \underbrace{ \Converse{\ProjectionIndexSubset} \np{\FlatRR_{\IndexSubset}
           \cap \TripleNormDualBall} }_{\substack{\textrm{cylinder}}}
      \eqfinv
      \label{eq:generalized_top-k_dual_norm_unit_ball}
    \end{equation}
    hence is an intersection of cylinders, 
  \item[$(ii)$] 
    \emph{generalized $k$-support dual~norm} the corresponding dual norm (of the generalized top-$k$ dual~norm)
    \cite[Eq.~(11)]{Chancelier-DeLara:2022_SVVA}
    \begin{equation}
      \SupportDualNorm{\TripleNorm{\cdot}}{k} = \bp{
        \TopDualNorm{\TripleNorm{\cdot}}{k} }_{\star} 
      \eqfinv
      \label{eq:generalized_k-support_dual_norm}
    \end{equation}
    whose unit ball is 
    \begin{equation}
      \SupportDualNorm{\TripleNormBall}{k} = 
      \defsetco{\primal \in \RR^{\spacedim}}{\SupportDualNorm{\TripleNorm{\primal}}{k} \leq 1} 
      = \closedconvexhull\bp{ \bigcup_{ \cardinal{\IndexSubset} \leq k} 
        \ProjectionIndexSubset\np{ \TripleNormBall } }
      = \closedconvexhull\bp{ \bigcup_{ \cardinal{\IndexSubset} \leq k} 
        \ProjectionIndexSubset\np{
          \TripleNormSphere } }
      \eqfinv
      \label{eq:generalized_k-support_dual_norm_unit_ball}
    \end{equation}
    and whose unit sphere is denoted by~\( \SupportNorm{\TripleNormSphere}{k} \).
  \end{itemize}
\end{definition}

So, it appears that the unit ball~\( \SupportDualNorm{\TripleNormBall}{k}\) of the generalized $k$-support dual~norm, as
expressed in~\eqref{eq:generalized_k-support_dual_norm_unit_ball}, is exactly
the $k$-SPaC of the unit ball~$\TripleNormBall$ of the source
norm~$\TripleNorm{\cdot}$
as expressed in~\eqref{eq:SPaC_hull} (it is also the $k$-SPaC hull of the unit
sphere~$\TripleNormSphere$).

\subsection{Exposed faces of unit balls of $k$-support dual~norms}
\label{Face_characterization_and_support_identification}

Here, we apply the result of Theorem~\ref{th:Support_identification} with
(primal) set the unit ball of a norm.

\begin{proposition}
  \label{pr:Support_identification_generalized_k-support_norm}
  Let $\TripleNorm{\cdot}$ be a norm on~$ \RR^{\spacedim}$, that we call the source norm,
  with unit ball~$\TripleNormBall$.
Then, for any \( k \in \Vset \) and any dual vector~$\dual \in \DUAL$, 
the exposed faces of the unit ball~$\SupportDualNorm{\TripleNormBall}{k}$
  of the generalized $k$-support dual~norm are related to the exposed faces
of the unit ball~$\TripleNormBall$ of the source norm by   
  \begin{equation}
    \overbrace{
      \ExposedFace\np{\SupportDualNorm{\TripleNormBall}{k},
        \underbrace{\dual}_{\substack{\textrm{dual} \\ \textrm{vector}}}
      }
}^{{\substack{\textrm{exposed face of} \\ \textrm{the unit
        ball~$\SupportDualNorm{\TripleNormBall}{k}$} \\ \textrm{of the generalized} \\ \textrm{$k$-support dual~norm} }}}
      =
    \closedconvexhull
    \Bsetco{
      \underbrace{\ProjectionIndexSubsetOpt}%
      _{\substack{\textrm{$k$-sparse}\\ \textrm{projection} \\ \textrm{on $\FlatRR_{\IndexSubset\opt}$}}}
      \bp{
        \overbrace{ \ExposedFace\np{\TripleNormBall,
            \underbrace{\ProjectionIndexSubset\opt\dual}%
            _{\substack{\textrm{$k$-sparse}\\ \textrm{projection} \\
          \textrm{of the} \\ \textrm{dual vector} \\ \textrm{on
            $\FlatRR_{\IndexSubset\opt}$} } }
    }
       }^{{\substack{\textrm{exposed face of} \\ \textrm{the unit
        ball~$\TripleNormBall$} \\ \textrm{of the source norm} }}}
     }}
    {\underbrace{\IndexSubset\opt \in 
      \argmax_{\cardinal{\IndexSubset}\leq k}
      \TripleNormDual{\ProjectionIndexSubset\dual}}%
    _{\substack{\textrm{selection of} \\ \textrm{$k$-sparse}\\
        \textrm{projection spaces}  \\ \textrm{$\FlatRR_{\IndexSubset\opt}$}}}
  }
    \eqfinp
    \label{eq:Face_description_generalized_k-support_norm}
  \end{equation}
  %
\end{proposition}

\begin{proof}
  The proof results from Theorem~\ref{th:Support_identification} with
  \( \Primal= \TripleNormBall \),
  \(  \AtomicSet = \bigcup_{\cardinal{\IndexSubset}\leq k}
  \ProjectionIndexSubset\np{\Primal}
  = \bigcup_{\cardinal{\IndexSubset}\leq k}
  \ProjectionIndexSubset\np{\TripleNormBall} \)
  in~\eqref{eq:sparse_atomic_set}, and
  \[
    \OptimalSupports{\Primal,k}{\dual}
    =
    \argmax_{\substack{\IndexSubset \subset\Vset \\ \cardinal{\IndexSubset}\leq k}}   
    \SupportFunction{\TripleNormBall}\np{\ProjectionIndexSubset\dual}
    =
    \argmax_{\substack{\IndexSubset \subset\Vset \\ \cardinal{\IndexSubset}\leq k}}   
    \SupportFunction{\closedconvexhull\Primal}\np{\ProjectionIndexSubset\dual}
    =
    \argmax_{\cardinal{\IndexSubset}\leq k}
    \TripleNormDual{\ProjectionIndexSubset\dual}
  \]
  in~\eqref{eq:OptimalSupports}, as \(  \SupportFunction{\TripleNormBall} \) is
  the dual norm~$\TripleNormDual{\cdot}$.
  Then --- using the
  expression~\eqref{eq:generalized_k-support_dual_norm_unit_ball} of
  \( \SupportDualNorm{\TripleNormBall}{k} = \closedconvexhull\bp{ \bigcup_{
      \cardinal{\IndexSubset} \leq k} \ProjectionIndexSubset\np{ \TripleNormBall }
  } \) --- Equation~\eqref{eq:Support_identification} gives
  \begin{equation}
    \label{eq:Support_identification_generalized_k-support_norm}
    \bp{ \bigcup_{ \cardinal{\IndexSubset} \leq k} 
      \ProjectionIndexSubset\np{ \TripleNormBall } }
    \cap    \ExposedFace\bp{\SupportDualNorm{\TripleNormBall}{k},\dual}
    = 
    \Bsetco{\ProjectionIndexSubsetOpt\bp{
        \ExposedFace(\TripleNormBall,\ProjectionIndexSubsetOpt\dual)}}
    {\IndexSubset\opt \in
       \argmax_{\cardinal{\IndexSubset}\leq k} \TripleNormDual{\ProjectionIndexSubset\dual}}
    \eqfinv
  \end{equation}
and Equation~\eqref{eq:Face_description} 
gives
  Equation~\eqref{eq:Face_description_generalized_k-support_norm}.
\end{proof}

\subsection{Finding optimal solutions with at most~$k$ nonzero coordinates}
\label{Finding_optimal_solutions_with_at_most_k_nonzero_coordinates}

Here, we make the connection with the original motivation as exposed in~\S\ref{Motivation}.
To obtain sparse solution to a minimization problem, the standard approach --- 
which consists in adding an ($\ell_1$)-norm penalty (Lasso) --- does not provide
a direct mechanism to control the number of nonzero entries in the solution.  
We now show that, by adding a generalized top-$k$ dual~norm, we can control this number under a
condition provided below.

\begin{theorem}
  \label{th:support_identification}
  Let \( \fonctionprimal : \RR^{\spacedim} \to \RR \) be a smooth convex function, and
  $\gamma >0$.
  Let $\TripleNorm{\cdot}$ be a norm on~$ \RR^{\spacedim}$. 
  For given {sparsity threshold~\( k \in \ic{1,{\spacedim}} \)},
  we consider the {generalized top-$k$ dual~norm}~\(
  \SupportDualNorm{\DoubleNorm{\cdot}}{{k}} \) (see
  Definition~\ref{de:top_norm}).
  Then, an {optimal solution~\(  \primal\opt \)} of 
  \begin{subequations}
    \begin{equation}
      \min_{\primal\in\RR^{\spacedim}}
      \bp{\fonctionprimal\np{\primal} + \gamma
        \SupportDualNorm{\DoubleNorm{\primal}}{{k}}  }
    \end{equation}
    has support
    \begin{equation}
      {  \Support{\atom\opt} } \subset
      \bigcup_{\substack{\IndexSubset\opt
          \in \argmax_{\cardinal{\IndexSubset}\leq {k}} \\
          \DoubleNormDual{\pi_{\IndexSubset}\np{-\nabla\fonctionprimal\np{\primal\opt}}} } }
      \IndexSubset\opt
      \eqfinp
      \label{eq:support_identification_b}
    \end{equation}
    \label{eq:support_identification}
  \end{subequations}
  As a consequence, if
  \begin{subequations}
    \begin{equation}
      \argmax_{\cardinal{\IndexSubset}\leq {k}}
      \DoubleNormDual{\pi_{\IndexSubset}\np{-\nabla\fonctionprimal\np{\primal\opt}}}
      = \IndexSubset\opt \quad  {\textrm{is unique}}
      \eqfinv 
     \label{eq:support_identification_unique_a}
   \end{equation}
    \begin{equation}
      \textrm{then }\quad   { \Support{\atom\opt} \subset     \IndexSubset\opt }
      \text{ with } { \cardinal{\IndexSubset\opt}\leq k }
      \eqfinv 
   \label{eq:support_identification_unique_b}
     \end{equation}
    \label{eq:support_identification_unique}
  \end{subequations}
  so that  the {optimal solution~\( \primal\opt \)} is {$k$-sparse}.
\end{theorem}

\begin{proof}
  We have that 
  \begin{align*}
    &
      \primal\opt \in \argmin_{\primal\in\RR^{\spacedim}}
      \bp{\fonctionprimal\np{\primal} + \gamma
      \SupportDualNorm{\DoubleNorm{\primal}}{{k}} }
    \\
    \iff
    &
      \primal\opt \in \argmin_{\primal\in\RR^{\spacedim}}
      \bp{\fonctionprimal\np{\primal} + \gamma
      \SupportFunction{\TopDualNorm{\TripleNormBall}{k}}  }
      \tag{by~\eqref{eq:generalized_top-k_dual_norm_unit_ball}}      
    \\
    \iff
    &
      0 \in \partial \bp{ \fonctionprimal
      + \gamma \SupportFunction{\TopDualNorm{\TripleNormBall}{k}}}\np{\primal\opt}
      \tag{by the Fermat rule}
    \\
    \iff
    &
      0 \in \partial {\fonctionprimal}\np{\primal\opt}
      + \gamma
      \partial\SupportFunction{\TopDualNorm{\TripleNormBall}{k}}\np{\primal\opt}
      \intertext{by \cite[Corollary~16.48]{Bauschke-Combettes:2017}, as both
      functions are proper convex lsc, and $\dom\fonctionprimal=\RR^{\spacedim}$,}
      \iff
    &
      0 \in \nabla{\fonctionprimal}\np{\primal\opt}
      + \gamma
      \partial\SupportFunction{\TopDualNorm{\TripleNormBall}{k}}\np{\primal\opt}
      \tag{by \cite[Proposition~17.31~(i)]{Bauschke-Combettes:2017}}
    \\
    \iff
    &
      0 \in \nabla{\fonctionprimal}\np{\primal\opt}
      + \gamma \ExposedFace(\TopDualNorm{\TripleNormBall}{k},\primal\opt)
      \intertext{as the subdifferential of a support function is the support
      function of the corresponding face,           see for instance
      \cite[Theorem~1.7.2]{Schneider:2014}, \cite[Corollary~8.25]{Rockafellar-Wets:1998},}
      \iff
    &
      -\nabla\fonctionprimal\np{\primal\opt} \in
      \gamma \ExposedFace(\TopDualNorm{\TripleNormBall}{k},\primal\opt)
    \\
    \iff
    &
      \begin{cases}
        \text{either } \primal\opt=0 \eqsepv
        -\nabla\fonctionprimal\np{0}\in\TopDualNorm{\TripleNormBall}{k}
        \\
        \text{or } \primal\opt\not=0 \eqsepv
        \frac{\primal\opt}{\SupportDualNorm{\TripleNorm{\primal\opt}}{k}} \in
        \ExposedFace\np{\SupportDualNorm{\TripleNormBall}{k},-\nabla\fonctionprimal\np{\primal\opt}}
      \end{cases}
      \tag{by polarity~\cite[Theorem~5.1]{Fan-Jeong-Sun-Friedlander:2020}}
    \\
    &\implies
      \begin{cases}
        \text{either } \primal\opt=0 \eqsepv
        \Support{\atom\opt} = \emptyset 
        \\
        \text{or } \primal\opt\not=0 \eqsepv
        \Support{\atom\opt} \subset \bigcup_{\substack{\IndexSubset\opt
        \in \argmax_{\cardinal{\IndexSubset}\leq {k}} \\
        \DoubleNormDual{\pi_{\IndexSubset}\np{-\nabla\fonctionprimal\np{\primal\opt}}} } }
        \IndexSubset\opt
      \end{cases}
  \end{align*}
by Equation~\eqref{eq:Support_identification_Support_primal_opt} in the support
identification Corollary~\ref{cor:Support_identification},
with \( \Primal = \TripleNormBall \) the unit ball of the
norm~$\TripleNorm{\cdot}$,
and \(  \AtomicSet = \bigcup_{\cardinal{\IndexSubset}\leq k}
  \ProjectionIndexSubset\np{\Primal}
  = \bigcup_{\cardinal{\IndexSubset}\leq k}
  \ProjectionIndexSubset\np{\TripleNormBall} \) in~\eqref{eq:sparse_atomic_set},
and \( \closedconvexhull\AtomicSet = \SupportDualNorm{\TripleNormBall}{k} \)
by~\eqref{eq:generalized_k-support_dual_norm_unit_ball}.
  
  Thus, we have proven~\eqref{eq:support_identification_b}.
  Equation~\eqref{eq:support_identification_unique} follows trivially.
\end{proof}

Recall that a norm \( \TripleNorm{\cdot}\) on~\( \RR^{\spacedim} \) is called
    \emph{monotonic}~\cite{Bauer-Stoer-Witzgall:1961}
    if, for all 
    $\primal$, $\primal'$ in~$ \RR^{\spacedim}$, we have 
    \(
    |\primal| \le |\primal'| \Rightarrow 
    \TripleNorm{\primal} \le \TripleNorm{\primal'}
    \),
    where $|\primal| \leq |\primal'|$ means 
    $|\primal_i| \leq |\primal'_i|$ for all $i\in\ic{1,\spacedim}$.
If, in Theorem~\ref{th:support_identification},
we suppose that the source
  norm~$\TripleNorm{\cdot}$ is monotonic and symmetric, we get
  the following additional result: in~\eqref{eq:support_identification_b},
\( \argmax_{\cardinal{\IndexSubset}\leq {k}} 
          \DoubleNormDual{\pi_{\IndexSubset}\np{-\nabla\fonctionprimal\np{\primal\opt}}} \) can be obtained by
       ordering the components of the vector \( \module{\nabla\fonctionprimal\np{\primal\opt}} \) and then
       taking indices of the~$k$ largest entries.
       Indeed, if $\TripleNorm{\cdot}$ is monotonic, so is its
       dual norm~\( \TripleNormDual{\cdot} \) as shown in~\cite{Bauer-Stoer-Witzgall:1961};
if $\TripleNorm{\cdot}$ is symmetric, so is its
       dual norm~\( \TripleNormDual{\cdot} \) (straightforward proof). The claimed result follows.
\bigskip

We conclude this part with the following Corollary, certainly well known in the Lasso literature.
\begin{corollary}
  Let \( \fonctionprimal : \RR^{\spacedim} \to \RR \) be a smooth convex function,
  $\gamma >0$ and $\DoubleNorm{\cdot}_1$ be the $\ell_1$-norm.  An {optimal
    solution~\( \primal\opt \)} of
  \begin{subequations}
    \begin{equation}
      \min_{\primal\in\RR^{\spacedim}}
      \bp{\fonctionprimal\np{\primal} + \gamma\DoubleNorm{\primal}_1 }
    \end{equation}
    has support
    \begin{equation}
      {  \Support{\atom\opt} \subset
        \argmax_{j \in \ic{1,{\spacedim}} }
        \module{\nabla_{j}\fonctionprimal\np{\primal\opt}} }
      \eqfinp 
      \label{eq:support_identification_corollary}
    \end{equation}  
  \end{subequations}
\end{corollary}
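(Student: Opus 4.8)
The plan is to recognize the $\ell_1$-norm as the generalized $k$-support dual~norm $\SupportDualNorm{\DoubleNorm{\cdot}}{1}$ associated with the source norm $\DoubleNorm{\cdot}_1$ and the sparsity threshold $k=1$, and then to specialize Theorem~\ref{th:support_identification}. First I would check this identification. Taking $\TripleNorm{\cdot}=\DoubleNorm{\cdot}_1$ and $k=1$, for a subset $\IndexSubset$ with $\cardinal{\IndexSubset}\leq 1$ the set $\pi_{\IndexSubset}\np{\TripleNormBall}$ equals $\na{0}$ if $\IndexSubset=\emptyset$, and equals the segment with endpoints $-e_j$ and $e_j$ if $\IndexSubset=\na{j}$, because $\DoubleNorm{e_j}_1=1$ (here $e_j$ denotes the $j$-th canonical basis vector). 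Hence, by the expression~\eqref{eq:generalized_k-support_dual_norm_unit_ball} of the unit ball, $\SupportDualNorm{\TripleNormBall}{1}=\closedconvexhull\na{e_1,-e_1,\ldots,e_{\spacedim},-e_{\spacedim}}$ is the cross-polytope, that is, the $\ell_1$-unit ball; a norm being determined by its unit ball, this gives $\SupportDualNorm{\DoubleNorm{\cdot}}{1}=\DoubleNorm{\cdot}_1$. (Equivalently, one may argue on the dual side that $\TopDualNorm{\DoubleNorm{\dual}}{1}=\sup_{\cardinal{\IndexSubset}\leq 1}\DoubleNormDual{\pi_{\IndexSubset}\dual}=\sup_{j\in\Vset}\module{\dual_j}=\DoubleNorm{\dual}_\infty$, whose dual norm is $\DoubleNorm{\cdot}_1$.)

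With this identification in hand, the optimization problem of the corollary is exactly the one of Theorem~\ref{th:support_identification} for $k=1$ and source norm $\DoubleNorm{\cdot}_1$, whose dual norm $\DoubleNormDual{\cdot}$ is the $\ell_\infty$-norm. Invoking~\eqref{eq:support_identification_b}, I get $\Support{\primal\opt}\subset\bigcup\IndexSubset\opt$, the union ranging over $\IndexSubset\opt\in\argmax_{\cardinal{\IndexSubset}\leq 1}\DoubleNorm{\pi_{\IndexSubset}\np{-\nabla\fonctionprimal\np{\primal\opt}}}_\infty$. It then remains to evaluate that index set: since $\DoubleNorm{\pi_\emptyset\np{-\nabla\fonctionprimal\np{\primal\opt}}}_\infty=0$ and $\DoubleNorm{\pi_{\na{j}}\np{-\nabla\fonctionprimal\np{\primal\opt}}}_\infty=\module{\nabla_j\fonctionprimal\np{\primal\opt}}$, whenever $\nabla\fonctionprimal\np{\primal\opt}\neq 0$ the maximizing subsets are precisely the singletons $\na{j}$ with $j\in\argmax_{j\in\Vset}\module{\nabla_j\fonctionprimal\np{\primal\opt}}$ (the empty subset drops out, its value $0$ being strictly below the positive maximum), and the union of these singletons is $\argmax_{j\in\Vset}\module{\nabla_j\fonctionprimal\np{\primal\opt}}$, which is the claimed~\eqref{eq:support_identification_corollary}. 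In the degenerate case $\nabla\fonctionprimal\np{\primal\opt}=0$, the Fermat rule gives $0\in\partial\DoubleNorm{\cdot}_1\np{\primal\opt}$, hence $\primal\opt=0$ and $\Support{\primal\opt}=\emptyset$, so the inclusion holds trivially (and consistently $\argmax_{j\in\Vset}\module{\nabla_j\fonctionprimal\np{\primal\opt}}=\Vset$ there).

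I do not anticipate any genuine difficulty: once the reduction to $k=1$ with $\ell_1$ source norm is made, everything is inherited from Theorem~\ref{th:support_identification}. The only step that needs a little care --- and the one I would call the ``hard part'', though it is really just bookkeeping --- is the evaluation of the $\argmax$ over subsets of cardinality at most one, namely keeping track of the empty subset and of the degenerate case $\nabla\fonctionprimal\np{\primal\opt}=0$.
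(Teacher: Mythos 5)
Your proposal is correct and follows essentially the same route as the paper: identify $\SupportDualNorm{\DoubleNorm{\cdot}}{1}$ with the $\ell_1$-norm when the source norm is $\ell_1$ (the paper simply cites its table of examples where you derive it directly), apply Theorem~\ref{th:support_identification} with $k=1$, and evaluate the union over the $\argmax$ of subsets of cardinality at most one as $\argmax_{j}\module{\nabla_{j}\fonctionprimal\np{\primal\opt}}$. Your explicit treatment of the empty subset and of the degenerate case $\nabla\fonctionprimal\np{\primal\opt}=0$ is a welcome refinement of a step the paper leaves implicit.
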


\begin{proof}
  If $\TripleNorm{\cdot}$ is the $\ell_1$-norm $\TripleNorm{\cdot}_{1}$
  on~$ \RR^{\spacedim}$, then the {generalized top-$k$
    dual~norm}~\( \SupportDualNorm{\DoubleNorm{\cdot}}{{k}} \) is also the
  $\ell_1$-norm $\TripleNorm{\cdot}_{1}$, for any \( k \in \ic{1,{\spacedim}} \) (see
  Table~\ref{tab:Examples_of_generalized_top-k_and_k-support_dual_norms}).
  Then, we apply Theorem~\ref{th:support_identification} in the case~$k=1$, and
  we get~\eqref{eq:support_identification_b}, which is
  exactly~\eqref{eq:support_identification_corollary} as
  \( \bigcup_{\substack{\IndexSubset\opt
      \in \argmax_{\cardinal{\IndexSubset}\leq {1}} \\
      \DoubleNormDual{\pi_{\IndexSubset}\np{-\nabla\fonctionprimal\np{\primal\opt}}} }
  } \IndexSubset\opt = \argmax_{j \in \ic{1,{\spacedim}} }
  \module{\nabla_{j}\fonctionprimal\np{\primal\opt}} \).
\end{proof}

\subsection{The case of orthant-monotonic source norms}
\label{The_orthant-monotonic_case}

The notion of \emph{orthant-monotonic norm}\footnote{%
  A norm is orthant-monotonic if and only if it is monotonic in every orthant,
  see~\cite[Lemma~2.12]{Gries:1967}, hence the name.} has been introduced
in~\cite{Gries:1967,Gries-Stoer:1967} and an equivalent characterization is
provided in~\cite[Item~7 in Proposition~4]{Chancelier-DeLara:2022_OSM_JCA}: a
norm~$\TripleNorm{\cdot}$ on~\( \RR^{\spacedim} \) is orthant-monotonic if and only if it is \emph{increasing
  with the coordinate subspaces}, in the sense that
$ \TripleNorm{\primal_{J}} \leq \TripleNorm{\primal_\IndexSubset}$ for any
\( \primal \in \RR^{\spacedim} \) and any two subsets $J$ and $K$ of $\Vset$
satisfying \( J \subset \IndexSubset \).  In fact, this is equivalent to
$\TripleNorm{\primal_{J}} \leq \TripleNorm{\primal}$ for any vector
\( \primal \) in \( \RR^{\spacedim} \) and any subset \( J \) of \( \Vset \).

\begin{proposition}
  \label{pr:Support_identification_generalized_k-support_norm_orthant-monotonic}
   Let $\TripleNorm{\cdot}$ be a (source) norm on~$ \RR^{\spacedim}$, with unit ball~$\TripleNormBall$.
    For given {sparsity threshold~\( k \in \ic{1,{\spacedim}} \)},
    we consider the {generalized top-$k$ dual~norm}~\(
    \SupportDualNorm{\DoubleNorm{\cdot}}{{k}} \) (see
    Definition~\ref{de:top_norm}).
  If the source
  norm~$\TripleNorm{\cdot}$ 
  is orthant-monotonic, 
  then for 
  any nonzero dual vector~$\dual \in \DUAL\setminus\na{0}$, we have that

 \begin{equation}
    \label{eq:Support_identification_generalized_k-support_norm_orthant-monotonic}
    \overbrace{
    {\LevelSet{\lzero}{k}} 
    \cap \ExposedFace\bp{ 
        \SupportDualNorm{\TripleNormBall}{k}
        ,
        \underbrace{\dual}%
        _{\substack{\textrm{dual} \\ \textrm{vector}}}
      }
    }^{{\substack{\textrm{$k$-sparse vectors of the exposed face} \\ \textrm{of the unit
        ball~$\SupportDualNorm{\TripleNormBall}{k}$ of} \\ \textrm{the
        generalized $k$-support dual~norm} }}}
    = 
    \Bsetco{ \underbrace{\ProjectionIndexSubsetOpt}%
     _{\substack{\textrm{$k$-sparse}\\ \textrm{projection} \\ \textrm{on $\FlatRR_{\IndexSubset\opt}$}}}
      \bp{ \overbrace{ 
        \ExposedFace\np{ 
          \TripleNormBall
          ,
        \underbrace{\ProjectionIndexSubsetOpt\dual}%
            _{\substack{\textrm{$k$-sparse}\\ \textrm{projection} \\
          \textrm{of the} \\ \textrm{dual vector} \\ \textrm{on
            $\FlatRR_{\IndexSubset\opt}$} } }
    }
           }^{{\substack{\textrm{exposed face of} \\ \textrm{the unit
        ball~$\TripleNormBall$} \\ \textrm{of the source norm} }}}
      }
    }
    {\underbrace{\IndexSubset\opt \in
       \argmax_{\cardinal{\IndexSubset}\leq k}
       \TripleNormDual{\ProjectionIndexSubset\dual}}%
         _{\substack{\textrm{selection of} \\ \textrm{$k$-sparse}\\
        \textrm{projection spaces}  \\ \textrm{$\FlatRR_{\IndexSubset\opt}$}}}
   }
    \eqfinv
  \end{equation}
%
  and the exposed faces of~$\SupportDualNorm{\TripleNormBall}{k}$ are related to the exposed faces
  of~$\TripleNormBall$ by~\eqref{eq:Face_description_generalized_k-support_norm}. 
\end{proposition}

To the difference
of~\eqref{eq:Support_identification_generalized_k-support_norm}, the left-hand
side
of~\eqref{eq:Support_identification_generalized_k-support_norm_orthant-monotonic}
is exactly the intersection of the level set~${\LevelSet{\lzero}{k}}$ of the
\lzeropseudonorm\ with the exposed
face~$\ExposedFace\bp{\SupportDualNorm{\TripleNormBall}{k},\dual}$, whereas it
was the intersection of a \emph{subset} of the level
set~${\LevelSet{\lzero}{k}}$ of the \lzeropseudonorm\ with the exposed
face~$\ExposedFace\bp{\SupportDualNorm{\TripleNormBall}{k},\dual}$ in the
left-hand side of~\eqref{eq:Support_identification_generalized_k-support_norm}.
\bigskip

\begin{proof}
  The assumptions of
  Proposition~\ref{pr:Support_identification_generalized_k-support_norm} are
  satisfied. Thus, the
  equality~\eqref{eq:Support_identification_generalized_k-support_norm} holds
  true.  The right-hand sides
  of~\eqref{eq:Support_identification_generalized_k-support_norm} and
  of~\eqref{eq:Support_identification_generalized_k-support_norm_orthant-monotonic}
  are identical.  By comparing the left-hand side
  of~\eqref{eq:Support_identification_generalized_k-support_norm} --- namely,
  \( \bigcup_{ \cardinal{\IndexSubset} \leq k} \ProjectionIndexSubset\np{ \TripleNormBall
  } \cap \ExposedFace(\SupportDualNorm{\TripleNormBall}{k},\dual) \) --- with the
  left-hand side
  of~\eqref{eq:Support_identification_generalized_k-support_norm_orthant-monotonic}
  --- namely,
  \( {\LevelSet{\lzero}{k}} \cap
  \ExposedFace(\SupportDualNorm{\TripleNormBall}{k},\dual) \) --- we conclude that
  proving~\eqref{eq:Support_identification_generalized_k-support_norm_orthant-monotonic}
  amounts to showing that
  \begin{equation*}
    \bigcup_{ \cardinal{\IndexSubset} \leq k} 
    \ProjectionIndexSubset\np{ \TripleNormBall } 
    \cap \ExposedFace(\SupportDualNorm{\TripleNormBall}{k},\dual)
    =
    {\LevelSet{\lzero}{k}} \cap
    \ExposedFace(\SupportDualNorm{\TripleNormBall}{k},\dual)
    \eqfinp
  \end{equation*}
  We prove the equality by two opposite inclusions.

  On the one hand, we have that
  \begin{align*}
    \bigcup_{ \cardinal{\IndexSubset} \leq k} \ProjectionIndexSubset\np{\TripleNormBall}
    \cap 
    \ExposedFace(\SupportDualNorm{\TripleNormBall}{k},\dual)
    & \subset
      \bigcup_{ \cardinal{\IndexSubset} \leq k} 
      \FlatRR_{\IndexSubset} \cap
      \ExposedFace(\SupportDualNorm{\TripleNormBall}{k},\dual)
      \tag{as \( \ProjectionIndexSubset\np{\TripleNormBall} \subset \FlatRR_{\IndexSubset}
      \) for all~$\IndexSubset$, by definition of the 
      {orthogonal projection mapping}~$\ProjectionIndexSubset$}
    \\ 
    &=
      {\LevelSet{\lzero}{k}} \cap
      \ExposedFace(\SupportDualNorm{\TripleNormBall}{k},\dual)
      \tag{by~\eqref{eq:LevelSet_lzero_FlatRR_IndexSubset}}
      \eqfinp 
  \end{align*}
  On the other hand, we prove the reverse inclusion
  \( {\LevelSet{\lzero}{k}} \cap
  \ExposedFace(\SupportDualNorm{\TripleNormBall}{k},\dual) 
  \subset \bigcup_{ \cardinal{\IndexSubset} \leq k} \ProjectionIndexSubset\np{\TripleNormBall}
  \cap 
  \ExposedFace(\SupportDualNorm{\TripleNormBall}{k},\dual) \).
  Indeed, for any nonzero dual vector~$\dual \in \DUAL\setminus\na{0}$, we have that
  \begin{align*}
    {\LevelSet{\lzero}{k}} \cap 
    \ExposedFace(\SupportDualNorm{\TripleNormBall}{k},\dual)
    &=
      {\LevelSet{\lzero}{k}} \cap \SupportDualNorm{\TripleNormSphere}{k} \cap
      \ExposedFace(\SupportDualNorm{\TripleNormBall}{k},\dual)
      \tag{because \( \ExposedFace(\SupportDualNorm{\TripleNormBall}{k},\dual)
      \subset \SupportDualNorm{\TripleNormSphere}{k} \) as \( \dual \neq 0 \)}
    \\
    &=
      {\LevelSet{\lzero}{k}} \cap \SupportDualNorm{\TripleNormSphere}{\spacedim} \cap
      \ExposedFace(\SupportDualNorm{\TripleNormBall}{k},\dual)
      \intertext{by \cite[Equation~(37) in
      Proposition~20]{Chancelier-DeLara:2022_OSM_JCA}, giving
      \( {\LevelSet{\lzero}{k}} \cap \SupportDualNorm{\TripleNormSphere}{k} =
      {\LevelSet{\lzero}{k}} \cap \SupportDualNorm{\TripleNormSphere}{\spacedim} \) 
      using the property that $\TripleNorm{\cdot}$ is orthant-monotonic,}
       &=
         {\LevelSet{\lzero}{k}} \cap \TripleNormSphere \cap 
         \ExposedFace(\SupportDualNorm{\TripleNormBall}{k},\dual)
         \intertext{by \cite[Item~2 in Proposition~13]{Chancelier-DeLara:2022_OSM_JCA},
         giving \( \SupportDualNorm{\TripleNorm{\cdot}}{\spacedim} =\TripleNorm{\cdot} \)
         using the property that $\TripleNorm{\cdot}$ is orthant-monotonic,}
       &=
         \bigcup_{ \cardinal{\IndexSubset} \leq k} \FlatRR_{\IndexSubset} \cap \TripleNormSphere \cap 
         \ExposedFace(\SupportDualNorm{\TripleNormBall}{k},\dual)
         \tag{as \( {\LevelSet{\lzero}{k}} =\bigcup_{ \cardinal{\IndexSubset}
         \leq k} \FlatRR_{\IndexSubset} \) by~\eqref{eq:LevelSet_lzero_FlatRR_IndexSubset}}
  \\
  &\subset
    \bigcup_{ \cardinal{\IndexSubset} \leq k} \ProjectionIndexSubset\np{\TripleNormSphere}
    \cap 
    \ExposedFace(\SupportDualNorm{\TripleNormBall}{k},\dual)
    \tag{as \( \FlatRR_{\IndexSubset} \cap \TripleNormSphere
    \subset \ProjectionIndexSubset\np{\TripleNormSphere} \), by definition of the 
    {orthogonal projection mapping}~$\ProjectionIndexSubset$}
   \\
    &\subset
    \bigcup_{ \cardinal{\IndexSubset} \leq k} \ProjectionIndexSubset\np{\TripleNormBall}
    \cap 
    \ExposedFace(\SupportDualNorm{\TripleNormBall}{k},\dual)
    \tag{as \( \ProjectionIndexSubset\np{\TripleNormSphere}
    \subset \ProjectionIndexSubset\np{\TripleNormBall} \) }
    \eqfinp
  \end{align*}
  This ends the proof.
\end{proof}

If, in
Proposition~\ref{pr:Support_identification_generalized_k-support_norm_orthant-monotonic},
we suppose that the source
  norm~$\TripleNorm{\cdot}$ 
  is not only orthant-monotonic, but is monotonic (which is stronger) and symmetric, we get
  the following additional result: to the right-hand side
of~\eqref{eq:Support_identification_generalized_k-support_norm_orthant-monotonic},
\( \argmax_{\cardinal{\IndexSubset}\leq k}
       \TripleNormDual{\ProjectionIndexSubset\dual} \) can be obtained by
       ordering the components of the vector \( \module{\dual} \) and then
       taking indices of the~$k$ largest entries
              (see the comment following the proof of
              Theorem~\ref{th:support_identification}).

\subsection{The case of orthant-strictly monotonic source norms}
\label{The_orthant-strictly_monotonic_case}

The notion of \emph{orthant-strictly monotonic norm} has been introduced
in~\cite[Definition~5]{Chancelier-DeLara:2022_OSM_JCA}: a
norm~$\TripleNorm{\cdot}$ is orthant-strictly monotonic if and only if, for all
$\primal$, $\primal'$ in~$ \RR^{\spacedim}$, we have \np{
  \( |\primal| < |\primal'| \text{ and } \primal~\circ~\primal' \ge 0 \Rightarrow
  \TripleNorm{\primal} < \TripleNorm{\primal'} \) },
where, for any \( \primal=\np{\primal_1,\ldots,\primal_d} \in \RR^{\spacedim} \), we
  denote   \( \module{\primal}
  =\np{\module{\primal_1},\ldots,\module{\primal_d}} \in \RR^{\spacedim} \).
An equivalent characterization is provided in~\cite[Item~3 in
Proposition~6]{Chancelier-DeLara:2022_OSM_JCA}.  A norm~$\TripleNorm{\cdot}$ is
orthant-strictly monotonic if and only if it is \emph{strictly increasing with
  the coordinate subspaces}, in the sense that\footnote{%
  By \( J \subsetneq K \), we mean that \( J \subset K \) and \( J \neq K \). }, for any
\( \primal \in \RR^{\spacedim} \) and any
\( J \subsetneq K \subset\ic{1,{\spacedim}} \), we have
$ \primal_J \neq \primal_K \Rightarrow \TripleNorm{\primal_{J}} < \TripleNorm{\primal_K}$.
An orthant-strictly monotonic norm is orthant-monotonic.

\begin{proposition}
  \label{pr:Support_identification_generalized_k-support_norm_orthant-strictly_monotonic}
  Let $\TripleNorm{\cdot}$ be a (source) norm on~$ \RR^{\spacedim}$, with unit
  ball~$\TripleNormBall$.  For given {sparsity
    threshold~\( k \in \ic{1,{\spacedim}} \)}, we consider the {generalized
    top-$k$ dual~norm}~\( \SupportDualNorm{\DoubleNorm{\cdot}}{{k}} \) (see
  Definition~\ref{de:top_norm}).
  If the source
  norm~$\TripleNorm{\cdot}$ 
  is orthant-strictly
  monotonic, 
  then for 
  any nonzero dual vector~$\dual \in \DUAL\setminus\na{0}$, we have that
  \begin{equation}
        \label{eq:Support_identification_generalized_k-support_norm_orthant-strictly_monotonic}
    \overbrace{
    {\LevelSet{\lzero}{k}} 
    \cap \ExposedFace\bp{ 
        \SupportDualNorm{\TripleNormBall}{k}
        ,
        \underbrace{\dual}%
        _{\substack{\textrm{dual} \\ \textrm{vector}}}
      }
    }^{{\substack{\textrm{$k$-sparse vectors of the exposed face} \\ \textrm{of the unit
        ball~$\SupportDualNorm{\TripleNormBall}{k}$ of} \\ \textrm{the
        generalized $k$-support dual~norm} }}}
    = 
    \Bsetco{ 
      \overbrace{ 
        \ExposedFace\np{ 
          \TripleNormBall
          ,
        \underbrace{\ProjectionIndexSubsetOpt\dual}%
            _{\substack{\textrm{$k$-sparse}\\ \textrm{projection} \\
          \textrm{of the} \\ \textrm{dual vector} \\ \textrm{on
            $\FlatRR_{\IndexSubset\opt}$} } }
    }
           }^{{\substack{\textrm{exposed face of} \\ \textrm{the unit
        ball~$\TripleNormBall$} \\ \textrm{of the source norm} }}}
    }
    {\underbrace{\IndexSubset\opt \in
       \argmax_{\cardinal{\IndexSubset}\leq k}
       \TripleNormDual{\ProjectionIndexSubset\dual}}%
         _{\substack{\textrm{selection of} \\ \textrm{$k$-sparse}\\
        \textrm{projection spaces}  \\ \textrm{$\FlatRR_{\IndexSubset\opt}$}}}
   }
    \eqfinv
  \end{equation}
%
  and the exposed faces of~$\SupportDualNorm{\TripleNormBall}{k}$ are related to the exposed faces
  of~$\TripleNormBall$ by
  \begin{equation}
    \ExposedFace(\SupportDualNorm{\TripleNormBall}{k},\dual)=
    \closedconvexhull
    \Bsetco{
      \ExposedFace(\TripleNormBall,\ProjectionIndexSubsetOpt\dual)}
    {\IndexSubset\opt \in 
      \argmax_{\cardinal{\IndexSubset}\leq k} \TripleNormDual{\ProjectionIndexSubset\dual}}
    \eqfinp
    \label{eq:Face_description_generalized_k-support_norm_orthant-strictly_monotonic}
  \end{equation}
\end{proposition}
To the difference of the right-hand sides
of~\eqref{eq:Support_identification_generalized_k-support_norm},
\eqref{eq:Face_description_generalized_k-support_norm}, and
\eqref{eq:Support_identification_generalized_k-support_norm_orthant-monotonic},
there is no projection
\( \ProjectionIndexSubsetOpt\bp{
  \ExposedFace(\TripleNormBall,\ProjectionIndexSubsetOpt\dual)} \), but just
\( \ExposedFace(\TripleNormBall,\ProjectionIndexSubsetOpt\dual) \) in the
right-hand sides
of~\eqref{eq:Support_identification_generalized_k-support_norm_orthant-strictly_monotonic}
and
\eqref{eq:Face_description_generalized_k-support_norm_orthant-strictly_monotonic}.
\bigskip

\noindent
\begin{proof}
  An orthant-strictly monotonic norm is orthant-monotonic, the assumptions of
  Proposition~\ref{pr:Support_identification_generalized_k-support_norm_orthant-monotonic}
  hold true.  Thus,
  Equation~\eqref{eq:Support_identification_generalized_k-support_norm_orthant-monotonic}
  holds true and hence, to
  prove~\eqref{eq:Support_identification_generalized_k-support_norm_orthant-strictly_monotonic},
  it suffices to show that
  \( \pi_{\IndexSubset\opt}\bp{
    \ExposedFace(\TripleNormBall,\pi_{\IndexSubset\opt}\dual)} =
  \ExposedFace(\TripleNormBall,\pi_{\IndexSubset\opt}\dual) \).\\\\
  First, notice that, for any \( \IndexSubset\opt \in 
  \argmax_{\cardinal{\IndexSubset}\leq k}
  \TripleNormDual{\ProjectionIndexSubset\dual} \), we have that 
  \( \TripleNormDual{\pi_{\IndexSubset\opt}\dual} > 0 \).
  Indeed, on the contrary, we would have that
  \( \max_{\cardinal{\IndexSubset}\leq k}
  \TripleNormDual{\ProjectionIndexSubset\dual} =0\), hence that
  \( \ProjectionIndexSubset\dual=0 \) for any $\IndexSubset$ with
  $\cardinal{\IndexSubset}\leq k$. As $k\geq 1$, this would imply that
  $\dual=0$.\\\\
  Second, we have that
  \begin{align*}
    \primal\in \ExposedFace(\TripleNormBall,\pi_{\IndexSubset\opt}\dual)
    &\iff
      \proscal{\primal}{\pi_{\IndexSubset\opt}\dual} =
      \max_{\primalbis\in\TripleNormBall} \proscal{\primalbis}{\pi_{\IndexSubset\opt}\dual}
            \mtext{ and } \primal\in\TripleNormBall
      \tag{by definition~\eqref{eq:ExposedFace} of the exposed face
      \( \ExposedFace(\TripleNormBall,\pi_{\IndexSubset\opt}\dual) \)} 
    \\
    &\iff
      \proscal{\primal}{\pi_{\IndexSubset\opt}\dual} =
      \TripleNormDual{\pi_{\IndexSubset\opt}\dual} 
            \mtext{ and } \primal\in\TripleNormBall
      \tag{by definition of the dual norm~$\TripleNormDual{\cdot}$}
    \\
   &\iff
      \proscal{\primal}{\pi_{\IndexSubset\opt}\dual} =
      \TripleNorm{\primal} \TripleNormDual{\pi_{\IndexSubset\opt}\dual}
      \mtext{ and } \TripleNorm{\primal}=1
    \\
    &\iff
      \proscal{\pi_{\IndexSubset\opt}\primal}{\pi_{\IndexSubset\opt}\dual} =
      \TripleNorm{\primal} \TripleNormDual{\pi_{\IndexSubset\opt}\dual}
      \mtext{ and } \TripleNorm{\primal}=1
      \tag{as the orthogonal projection~\( \ProjectionIndexSubset \) is self-adjoint
  (see~\eqref{eq:orthogonal_projection_self-dual})}
    \\
    &\implies
      \TripleNorm{\pi_{\IndexSubset\opt}\primal} \TripleNormDual{\pi_{\IndexSubset\opt}\dual}
      \geq \proscal{\pi_{\IndexSubset\opt}\primal}{\pi_{\IndexSubset\opt}\dual} =
      \TripleNorm{\primal} \TripleNormDual{\pi_{\IndexSubset\opt}\dual}
      \tag{by polar inequality}
    \\
    &\implies
      \TripleNorm{\pi_{\IndexSubset\opt}\primal}\geq  \TripleNorm{\primal}
      \tag{since             \( \TripleNormDual{\pi_{\IndexSubset\opt}\dual} > 0 \)}
    \\
    &\implies
      \TripleNorm{\pi_{\IndexSubset\opt}\primal} = \TripleNorm{\primal}
      \intertext{because the norm~$\TripleNorm{\cdot}$ is orthant-strictly
      monotonic, hence  orthant-monotonic,
      hence \( \TripleNorm{\pi_{\IndexSubset\opt}\primal}\leq
      \TripleNorm{\primal} \)}
    &\implies
      \pi_{\IndexSubset\opt}\primal=\primal
      \eqfinv
  \end{align*}
  because if we had \( \pi_{\IndexSubset\opt}\primal\neq\primal \), we would conclude
  that \( \TripleNorm{\pi_{\IndexSubset\opt}\primal} < \TripleNorm{\primal} \),
as the norm~$\TripleNorm{\cdot}$ is orthant-strictly monotonic. 

  We conclude that 
  \( \pi_{\IndexSubset\opt}\bp{
    \ExposedFace(\TripleNormBall,\pi_{\IndexSubset\opt}\dual)}
  =  \ExposedFace(\TripleNormBall,\pi_{\IndexSubset\opt}\dual) \).
\end{proof}

\section{Geometry of the top-($q$,$k$) and \lpsupportnorm{p}{k}s}
\label{Geometry_of_lptopnorm{q}{k}_and__lpsupportnorm{p}{k}s_unit_balls}

We recall that ${\spacedim} \in \NN^*$ is a natural number, and that 
we consider the finite-dimensional real Euclidean vector space~$\RR^{\spacedim}$
equipped with the scalar product~\( \proscal{\,}{} \).

This section is devoted to the geometric analysis of the faces and normal
cones\footnote{%
  Recall that normal cones are in one-to-one correspondence with exposed faces;
  each can be recovered from the other by duality. When the unit ball is
  polytopal, as in~\S\ref{The_case_where_p_is_equal_to_1_or_to_+infty}, every
  face is exposed.}
of the unit balls of the generalized top-$k$ and $k$-support dual~norms,
as defined in Sect.~\ref{The_case_of_generalized_top-k_and_k-support_norms},
when they are generated by the $\ell_p$ source norms 
    \( \TripleNorm{\cdot} = \Norm{\cdot}_{p} \) for $p\in [1,\infty]$.
    In~\S\ref{lptopnorm{q}{k}_and__lpsupportnorm{p}{k}s}, we recall the definition
    of the $\ell_p$-norms, and then of \lpsupportnorm{p}{k}s and
    \lptopnorm{q}{k}s where $1/p + 1/q=1$.
Then, we relate these latter to the generalized top-$k$ and $k$-support dual~norms,
as defined in Sect.~\ref{The_case_of_generalized_top-k_and_k-support_norms},
when the source norm is the $\ell_p$ norm.
%
In~\S\ref{The_case_where_p_is_equal_to_1_or_to_+infty}, we study the case where
$p$ is equal to $+\infty$; we recall known results, where the unit
balls are polytopal and therefore all faces are exposed.
In~\S\ref{The_case_where_1<p<+infty}, we study the case where $1<p<+\infty$.
Here, we present new results. Rather strikingly, we show that every proper
face, exposed or not, of the unit ball of the $k$-support dual~norms is a
hypersimplex, i.e, the convex hull of $0/1$-valued points with the same
$\lzero$-norm (see~Equation~\eqref{eq:lzeropseudonorm}).
This property connects geometry of unit balls with the more analytical
  original motivation, as exposed in~\S\ref{Motivation}, of looking for
  sparsity-inducing norms.

\subsection{The \lptopnorm{q}{k} and  \lpsupportnorm{p}{k}s}
\label{lptopnorm{q}{k}_and__lpsupportnorm{p}{k}s}

For any $p$ in $[1,+\infty[$ and $\primal$ in $\RR^{\spacedim}$, let us recall that
the $\ell_p$-norm of $\primal$ is
\[
  \norm{\primal}_{p} = \Bp{\sum_{i=1}^{\spacedim} \abs{\primal_i}^p}^{\frac{1}{p}}
  \eqfinv 
\]
and that its $\ell_\infty$-norm is
\[
  \norm{\primal}_{\infty} = \max_{i\in\Vset} |\primal_i| \eqfinp
\]

For any $p$ in $[1,+\infty]$, we denote by~\( {\Ball}_{p} \)
and~\( {\Sphere}_{p} \) the unit ball and the unit sphere for the
$\ell_p$-norm. When the source norm is the $\ell_p$-norm, 
following Definition~\ref{de:top_norm}, 
we get that\footnote{%
We refer the reader to \cite[\S3.1]{Chancelier-DeLara:2022_OSM_JCA}, but
  where there the source norm is~\(\norm{\cdot}_{p}\), whereas here the 
source norm is the dual norm \(\norm{\cdot}_{q}\) of  \(\norm{\cdot}_{p}\), with \( 1/p + 1/q =1 \).
\label{ft:lpnorms}
}
\begin{itemize}
\item[$\circ$]
  the corresponding generalized $k$-support dual~norm 
  \( \SupportDualNorm{ \bp{ \norm{\cdot}_{p} } }{k}  \)
  is the \emph{\lpsupportnorm{p}{k}} denoted by~\( \LpSupportNorm{\cdot}{p}{k} \),
  with unit ball~\( \LpSupportBall{\Ball}{p}{k} \)
  and unit sphere~\( \LpSupportBall{\Sphere}{p}{k} \),
\item[$\circ$]
  the corresponding generalized top-$k$ dual~norm
  \( \TopDualNorm{ \bp{ \norm{\cdot}_{p} } }{k} \)
  is the \emph{\lptopnorm{q}{k}} denoted by~\( \LpTopNorm{\cdot}{q}{k} \), where \( 1/p + 1/q =1 \), 
  with unit ball~\( \LpTopBall{\Ball}{q}{k} \)
  and unit sphere~\( \LpTopBall{\Sphere}{q}{k} \).
\end{itemize}
For any $p$ and $q$ in $[1,+\infty]$ such that \( 1/p + 1/q =1 \), we have
\begin{equation}
  \LpTopNorm{\cdot}{p}{k} = \SupportFunction{\LpSupportBall{\Ball}{q}{k}}
  \eqsepv
  \LpTopBall{\Ball}{p}{k} = \PolarSet{\bp{\LpSupportBall{\Ball}{q}{k}}}
  \mtext{ and }
  \LpSupportNorm{\cdot}{q}{k} = \SupportFunction{\LpTopBall{\Ball}{p}{k}}
  \eqsepv
  \LpSupportBall{\Ball}{q}{k} = \PolarSet{\bp{\LpTopBall{\Ball}{p}{k}}}
  \eqfinp
\end{equation}

The norms obtained when $p$ varies from $1$ to $+\infty$ are summarized in
Table~\ref{tab:Examples_of_generalized_top-k_and_k-support_dual_norms}. The
top-($1$,$k$) and top-($2$,$k$) norms arise in various contexts under different
names (see~\cite{Gotoh-Takeda-Tono:2018} and references therein). They are called
the \emph{vector $k$-norm} in~\cite[Sect.~2]{Wu-Ding-Sun-Toh:2014}, the
\emph{largest $k$-norm} or \emph{CVaR norm} for the $\ell_\infty$-norm
in~\cite[Sect.~1]{Gotoh-Uryasev:2016}, the \emph{$2$-$k$-symmetric gauge norm}
in~\cite{Mirsky:1960}, and the \emph{Ky Fan vector norm} for the $\ell_2$-norm
in~\cite{Obozinski-Bach:hal-01412385}.
Similarly, the \lpsupportnorm{2}{k} is referred to as {$k$-support norm}
in~\cite{Argyriou-Foygel-Srebro:2012}.
The {\lpsupportnorm{p}{k}} for $p\in [1,\infty]$ is defined in
\cite[Definition~21]{McDonald-Pontil-Stamos:2016} where it is showed that the
dual norm of the \lptopnorm{p}{k} is the \lpsupportnorm{q}{k}, where \( 1/p + 1/q = 1 \).
Therefore (see Footnote~\ref{ft:lpnorms}), the generalized $k$-support dual~norm is the \lpsupportnorm{p}{k}
(denoted by~\( \LpSupportNorm{\cdot}{p}{k} \)) when the source
norm~$\TripleNorm{\cdot}$ is the $\ell_p$-norm $\Norm{\cdot}_{p}$.

Let us briefly discuss the cases when $p$ is equal to $1$ or to $+\infty$.  When
$p$ is equal to $1$, it follows from the definition that the unit ball
$\LpSupportBall{\Ball}{p}{k}$ is the cross-polytope~$B_1$, independently of~$k$,
and that its polar $\LpTopBall{\Ball}{q}{k}$ coincides with the unit hypercube
$B_\infty$. When $p$ is equal to $+\infty$, the balls
$\LpSupportBall{\Ball}{p}{k}$ and $\LpTopBall{\Ball}{q}{k}$ form two families of
polytopes that interpolate between the cross-polytope and the
hypercube~\cite{DezaHiriart-UrrutyPournin2021}, as illustrated in
Figures~\ref{fig2} and~\ref{fig1} when the dimension~$\spacedim$ is equal to~$3$.
\medskip

In \S\ref{The_case_where_p_is_equal_to_1_or_to_+infty}, we will apply the result of
Proposition~\ref{pr:Support_identification_generalized_k-support_norm_orthant-monotonic}
to the orthant-monotonic norm~\( \ell_{\infty}\), and obtain a characterization of
\( {\LevelSet{\lzero}{k}} \cap \ExposedFace(\LpSupportBall{\Ball}{\infty}{k},\dual) \)
in terms of the sets
\( \projection_{\IndexSubset}\bp{{\Sphere}_{\infty}\cap
  \ExposedFace({\Ball}_{\infty},\projection_{\IndexSubset}\dual)} \) for certain
subsets $\IndexSubset$ of $\Vset$.
In~\S\ref{The_case_where_1<p<+infty}, we will apply the result of
Proposition~\ref{pr:Support_identification_generalized_k-support_norm_orthant-strictly_monotonic}
to the orthant-strictly monotonic norms~\( \ell_p\), where $p$ belongs to
$[1,\infty[$, and obtain a characterization of
\( {\LevelSet{\lzero}{k}} \cap \ExposedFace(\LpSupportBall{\Ball}{p}{k},\dual) \)
in terms of the sets
\( {\Sphere}_{p}\cap \ExposedFace({\Ball}_{p},\projection_{\IndexSubset}\dual) \)
for certain subsets $\IndexSubset$ of $\Vset$.

\begin{table}
  \centering
  \begin{tabular}{||c||c|c||}
    \hline\hline 
    {source norm} 
    & generalized top-$k$ dual~norms
    & generalized $k$-support dual~norms
    \\
    \( \TripleNorm{\cdot} \) 
    & \( \TopDualNorm{\TripleNorm{\cdot}}{k} \), $k\in\Vset$
    & \( \SupportDualNorm{\TripleNorm{\cdot}}{k} \), $k\in\Vset$
    \\
    \hline\hline 
    \( \Norm{\cdot}_{p} \)
    & \lptopnorm{q}{k} 
    & \lpsupportnorm{p}{k} 
    \\
    & \( \LpTopNorm{\dual}{q}{k} \) %
    & \( \LpSupportNorm{\primal}{p}{k} \) %
    \\
    & \(\LpTopNorm{\dual}{q}{k}=\bp{ \sum_{l=1}^{k} \module{ \dual_{\nu(l)} }^q }^{\frac{1}{q}} \)
    & no analytic expression %
   \\[1mm]
    & &
    \\
    & \( \LpTopNorm{\dual}{q}{1}= \Norm{\dual}_{\infty} \)
    & \( \LpSupportNorm{\primal}{p}{1} = \Norm{\primal}_{1} \)
    \\
    \hline
    \( \Norm{\cdot}_{1} \) 
    & \lptopnorm{\infty}{k} 
    & \lpsupportnorm{1}{k} 
    \\
    & $\ell_{\infty}$-norm 
    & $\ell_{1}$-norm
    \\[1mm]
    & &
    \\
    & \( \LpTopNorm{\dual}{\infty}{k} 
      = \Norm{\dual}_{\infty} \), $\forall k\in\Vset$
    & \( \LpSupportNorm{\primal}{1}{k} = \Norm{\primal}_{1} \), $\forall k\in\Vset$
    \\
    \hline 
    \( \Norm{\cdot}_{2} \) 
    & \lptopnorm{2}{k} 
    & \lpsupportnorm{2}{k} 
    \\
    & \( \LpTopNorm{\dual}{2}{k} = \sqrt{ \sum_{l=1}^{k} \module{ \dual_{\nu(l)} }^2 } \)
    & \( \LpSupportNorm{\primal}{2}{k} \)
      no analytic expression
    \\
    & & (computation \cite[Prop. 2.1]{Argyriou-Foygel-Srebro:2012})
    \\[1mm]
    & &
    \\
    & \( \LpTopNorm{\dual}{2}{1}= \Norm{\dual}_{\infty} \)
    & \( \LpSupportNorm{\primal}{2}{1} = \Norm{\primal}_{1} \)
    \\
    \hline 
    \( \Norm{\cdot}_{\infty} \)
    & \lptopnorm{1}{k} 
    & \lpsupportnorm{\infty}{k} 
    \\
    & \( \LpTopNorm{\dual}{1}{k} = \sum_{l=1}^{k} \module{ \dual_{\nu(l)} } \)
    & \( \LpSupportNorm{\primal}{\infty}{k} =
      \max \na{ \frac{\Norm{\primal}_{1}}{k} , \Norm{\primal}_{\infty} } \) 
    \\[1mm]
    & &
    \\
    & \( \LpTopNorm{\dual}{1}{1}= \Norm{\dual}_{\infty} \)
    & \( \LpSupportNorm{\primal}{\infty}{1} = \Norm{\primal}_{1} \)

    \\
    \hline\hline
  \end{tabular}
  \caption{Examples of generalized top-$k$ and $k$-support dual~norms
    generated by the $\ell_p$ source norms 
    \( \TripleNorm{\cdot} = \Norm{\cdot}_{p} \) for $p\in [1,\infty]$ and $1/p +
    1/q =1$
     (from \cite[Table~1]{Chancelier-DeLara:2022_OSM_JCA}, but see Footnote~\ref{ft:lpnorms}).
    For \( \dual \in \RR^{\spacedim} \), $\nu$ denotes a permutation of \( \Vset \) such that
    \( \module{ \dual_{\nu(1)} } \geq \module{ \dual_{\nu(2)} } 
    \geq \cdots \geq \module{ \dual_{\nu(\spacedim)} } \).
  }
    \label{tab:Examples_of_generalized_top-k_and_k-support_dual_norms}
\end{table}

\subsection{The case when $p$ is equal to $+\infty$}
\label{The_case_where_p_is_equal_to_1_or_to_+infty}

When the source norm is the $\ell_\infty$-norm, the corresponding last row of
Table~\ref{tab:Examples_of_generalized_top-k_and_k-support_dual_norms} tells us
that we should study the unit balls of \lptopnorm{1}{k}s and
\lpsupportnorm{\infty}{k}s.  This results in families of polytopes whose geometry and
combinatorics have been studied in~\cite{DezaHiriart-UrrutyPournin2021}. In this section,
we review these families of polytopes.  Following the notation of
Coxeter, denote by $\gamma_{\spacedim}$ the ${\spacedim}$-dimensional hypercube
$[-1,1]^{\spacedim}$ and by $\beta_{\spacedim}$ the cross-polytope whose vertices
are the centers of the facets of $\gamma_{\spacedim}$.  Note that these two polytopes
are related by polarity. It follows from \cite[Equations (1.2) and
(1.3)]{DezaHiriart-UrrutyPournin2021} that, for every $k$ in $\llbracket1,{\spacedim}\rrbracket$,
\begin{equation}
  \LpTopBall{\Ball}{1}{k}=\conv\biggl(\beta_{\spacedim}\cup\frac{1}{k}\gamma_{\spacedim}\biggr)
  \text{ and } 
  \LpSupportBall{\Ball}{\infty}{k}=k\beta_{\spacedim}\cap\gamma_{\spacedim}
  \eqfinp
\end{equation}

\begin{figure}
  \begin{center}
          \mbox{\includegraphics[scale=1.3]{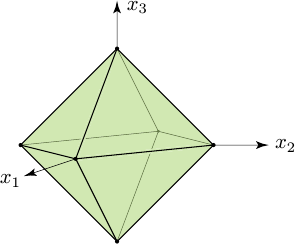}} \qquad
          \mbox{\includegraphics[scale=1.3]{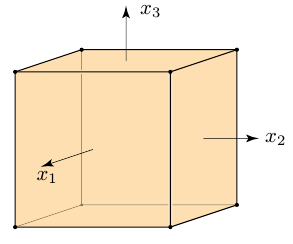}}    
  \end{center}
  \caption{Unit balls $\LpSupportBall{\Ball}{\infty}{1}$ (left) and
      $\LpTopBall{\Ball}{1}{1}$ (right) when $\spacedim=3$}
    \label{fig2}
\end{figure}

\begin{figure}
  \begin{center}
          \mbox{\includegraphics[scale=1.3]{DualL1alt.pdf}} \qquad
          \mbox{\includegraphics[scale=1.3]{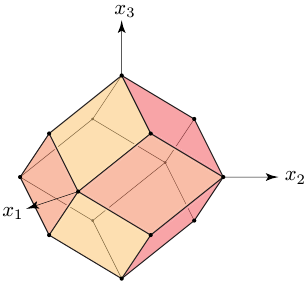}}    
  \end{center}
  \caption{Unit balls $\LpSupportBall{\Ball}{\infty}{2}$ (left) and
      $\LpTopBall{\Ball}{1}{2}$ (right) when $\spacedim=3$}
    \label{fig1}
\end{figure}

When $k$ is equal to $1$, $\LpTopBall{\Ball}{1}{k}$ coincides with the hypercube
$\gamma_{\spacedim}$ and $\LpSupportBall{\Ball}{\infty}{k}$ with the cross-polytope
$\beta_{\spacedim}$. When $k$ is equal to ${\spacedim}$, the opposite holds:
$\LpTopBall{\Ball}{1}{k}$ coincides with the cross-polytope $\beta_{\spacedim}$ and
$\LpSupportBall{\Ball}{\infty}{k}$ with the hypercube $\gamma_{\spacedim}$. In particular
these two families interpolate between the hypercube and the cross-polytope and,
as pointed out in \cite{DezaHiriart-UrrutyPournin2021},
$\LpTopBall{\Ball}{1}{k}$ and $\LpSupportBall{\Ball}{\infty}{k}$ are related by
polarity for all $k$ in $\llbracket1,{\spacedim}\rrbracket$ and not just when
$k$ is equal to $1$ or ${\spacedim}$.  Note that
in~\cite{DezaHiriart-UrrutyPournin2021} the parameter $k$ is allowed to take any
(possibly non integral) value in the interval $[1,{\spacedim}]$. In dimension
$3$, these two polytopes are shown on Figure~\ref{fig2} when $k$ is equal to $1$
and in Figure~\ref{fig1} when $k$ is equal to $2$. Theorem 2.1 from
\cite{DezaHiriart-UrrutyPournin2021} can be rephrased as follows.

\begin{theorem}
  \label{CDLDP.sec.3.2.thm.1}
  The facets of $\LpTopBall{\Ball}{1}{k}$ are precisely the sets of the form
  \begin{equation}\label{CDLDP.sec.3.2.thm.1.eq.1}
    \conv\biggl(\ExposedFace(\beta_{\spacedim},\dual)\cup{\frac{1}{k}\ExposedFace(\gamma_{\spacedim},\dual)}\biggr)
  \end{equation}
  where $\dual$ is a vector in $\{-1,0,1\}^{\spacedim}$ with exactly $k$ nonzero coordinates.
\end{theorem}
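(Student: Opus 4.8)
The plan is to identify $\LpTopBall{\Ball}{1}{k} = \conv(\beta_{\spacedim}\cup\frac1k\gamma_{\spacedim})$ as an instance of the convex closure $\closedconvexhull\AtomicSet$ of Theorem~\ref{th:Support_identification}, and then read off its facets from the face description~\eqref{eq:Face_description}. Concretely, I would take the source norm to be $\ell_\infty$, so that $\TripleNormBall = \gamma_{\spacedim}$, and recall from Definition~\ref{de:top_norm} (via the $p=\infty$ row of Table~\ref{tab:Examples_of_generalized_top-k_and_k-support_dual_norms} and the displayed identities just above the theorem) that $\LpTopBall{\Ball}{1}{k}=\conv(\beta_{\spacedim}\cup\frac1k\gamma_{\spacedim})$. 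The goal is to characterize the maximal proper faces, i.e.\ the $(\spacedim-1)$-dimensional ones. Since $\LpTopBall{\Ball}{1}{k}$ is a polytope, every proper exposed face is of the form $\ExposedFace(\LpTopBall{\Ball}{1}{k},\dual)$ for some nonzero $\dual$, so I would run through all $\dual$ and compute these exposed faces using~\eqref{eq:Face_description}, then isolate which $\dual$ yield full-dimensional faces of the boundary.

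The key computation is the following. For $\TripleNormBall=\gamma_{\spacedim}$ and a dual vector $\dual$, the dual norm is $\DoubleNorm{\dual}_1 = \sum_j|\dual_j|$, and $\DoubleNormDual{\pi_{\IndexSubset}\dual} = \sum_{j\in\IndexSubset}|\dual_j|$. Hence $\argmax_{\cardinal{\IndexSubset}\le k}\DoubleNormDual{\pi_{\IndexSubset}\dual}$ consists of the index sets of size $k$ collecting the $k$ largest values of $|\dual_j|$ (with ties giving several maximizers). The exposed face $\ExposedFace(\gamma_{\spacedim},\pi_{\IndexSubset}\dual)$ is the face of the hypercube selected by the sign pattern of $\dual$ on $\IndexSubset$; projecting by $\pi_{\IndexSubset}$ zeroes out the coordinates outside $\IndexSubset$. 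By~\eqref{eq:Face_description_generalized_k-support_norm} (Proposition~\ref{pr:Support_identification_generalized_k-support_norm} applied with $\Primal=\gamma_{\spacedim}$, which is orthant-monotonic so Proposition~\ref{pr:Support_identification_generalized_k-support_norm_orthant-monotonic} also applies), we then get $\ExposedFace(\LpSupportBall{\Ball}{\infty}{k},\dual)$, but what we actually want is the \emph{polar} statement for $\LpTopBall{\Ball}{1}{k}=\PolarSet{(\LpSupportBall{\Ball}{\infty}{k})}$; alternatively, and more directly, I would compute $\ExposedFace(\LpTopBall{\Ball}{1}{k},\dual)$ straight from $\LpTopBall{\Ball}{1}{k}=\conv(\beta_{\spacedim}\cup\frac1k\gamma_{\spacedim})$ by noting that the max of $\langle\cdot,\dual\rangle$ over a convex hull of two sets is the max over the two, and its argmax is the convex hull of the argmaxes whenever the two maxima coincide. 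One checks $\max_{\beta_{\spacedim}}\langle\cdot,\dual\rangle = \DoubleNorm{\dual}_\infty$ attained on $\ExposedFace(\beta_{\spacedim},\dual)$, and $\max_{\frac1k\gamma_{\spacedim}}\langle\cdot,\dual\rangle = \frac1k\DoubleNorm{\dual}_1$ attained on $\frac1k\ExposedFace(\gamma_{\spacedim},\dual)$. The two coincide exactly when $\DoubleNorm{\dual}_1 = k\DoubleNorm{\dual}_\infty$, which for the relevant $\dual$ (normalizing $\DoubleNorm{\dual}_\infty=1$) forces $\dual\in\{-1,0,1\}^{\spacedim}$ with exactly $k$ nonzero coordinates. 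In that case $\ExposedFace(\LpTopBall{\Ball}{1}{k},\dual)=\conv\bigl(\ExposedFace(\beta_{\spacedim},\dual)\cup\frac1k\ExposedFace(\gamma_{\spacedim},\dual)\bigr)$, matching~\eqref{CDLDP.sec.3.2.thm.1.eq.1}.

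It then remains to argue that (i) these faces are genuinely facets, i.e.\ $(\spacedim-1)$-dimensional, and (ii) no other $\dual$ gives a facet. For (i): with $\dual\in\{-1,0,1\}^{\spacedim}$ having $k$ nonzero entries, $\ExposedFace(\beta_{\spacedim},\dual)$ is the $(k-1)$-simplex spanned by the signed unit vectors $\operatorname{sign}(\dual_j)e_j$, $j\in\supp\dual$, while $\frac1k\ExposedFace(\gamma_{\spacedim},\dual)$ is a $(\spacedim-k)$-dimensional cube face living in the complementary coordinates scaled by $1/k$ and shifted by the fixed sign pattern on $\supp\dual$; these two affine pieces are in "general position" and their convex hull has dimension $(k-1)+(\spacedim-k)+1 - 1 = \spacedim-1$ — I would verify this by exhibiting $\spacedim$ affinely independent vertices. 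For (ii): if $\dual$ is not of this form, then after normalizing either $\DoubleNorm{\dual}_1 < k$ (so the max is attained only on $\beta_{\spacedim}$, giving a face of $\beta_{\spacedim}$, which has dimension $\le k-1 < \spacedim-1$ since $k\le\spacedim$, with equality only when $k=\spacedim$, a case handled by the $\dual\in\{-1,1\}^{\spacedim}$ instance) or $\DoubleNorm{\dual}_1 > k$ would be impossible once $\DoubleNorm{\dual}_\infty=1$ unless some $|\dual_j|\in(0,1)$, in which case one checks the resulting face of $\frac1k\gamma_{\spacedim}$ (a cube face of dimension $<\spacedim-1$, since at least one coordinate with $|\dual_j|\in(0,1)$ is not free) is again too small. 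The main obstacle is the bookkeeping in (i)–(ii): carefully tracking dimensions of the convex hull of a simplex and a cube face glued across complementary coordinate blocks, and ruling out all intermediate $\dual$. Since Theorem~2.1 of~\cite{DezaHiriart-UrrutyPournin2021} is being quoted, I would lean on that reference for the dimension count and simply present the translation into the language of exposed faces as above.
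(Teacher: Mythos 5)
First, a point of reference: the paper does not prove this theorem at all --- it is imported verbatim as a rephrasing of Theorem~2.1 of \cite{DezaHiriart-UrrutyPournin2021} --- so there is no internal proof to compare against, and your proposal has to stand on its own. Your overall strategy is the natural one: write $\LpTopBall{\Ball}{1}{k}=\conv\bigl(\beta_{\spacedim}\cup\frac{1}{k}\gamma_{\spacedim}\bigr)$, note that the maximum of $\proscal{\cdot}{\dual}$ over a convex hull of a union is the larger of the two maxima $\Norm{\dual}_{\infty}$ (over $\beta_{\spacedim}$) and $\frac{1}{k}\Norm{\dual}_{1}$ (over $\frac{1}{k}\gamma_{\spacedim}$), so the exposed face is the convex hull of the two exposed faces exactly when these coincide, and then sort the resulting faces by dimension.

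However, there is a genuine error at the pivotal step. You assert that $\Norm{\dual}_{1}=k\Norm{\dual}_{\infty}$ together with $\Norm{\dual}_{\infty}=1$ \emph{forces} $\dual\in\{-1,0,1\}^{\spacedim}$ with exactly $k$ nonzero coordinates. This is false: for $\spacedim=3$ and $k=2$, the vector $\dual=(1,\tfrac12,\tfrac12)$ satisfies $\Norm{\dual}_{\infty}=1$ and $\Norm{\dual}_{1}=2$, yet it exposes only the segment joining $(1,0,0)$ to $(\tfrac12,\tfrac12,\tfrac12)$, which is one-dimensional and not a facet. Consequently your case analysis in (ii), which treats only the strict inequalities $\Norm{\dual}_{1}<k$ and $\Norm{\dual}_{1}>k$, does not exhaust the vectors outside the claimed family (and the parenthetical claim that $\Norm{\dual}_{1}>k$ is impossible unless some $|\dual_j|\in(0,1)$ is also wrong, e.g.\ $\dual=(1,1,1)$ with $k=2$). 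The repair is a dimension count in the equality case: with $\Norm{\dual}_{\infty}=1$, let $m$ be the number of coordinates with $|\dual_j|=1$ and $z$ the number of zero coordinates, so that $\ExposedFace(\beta_{\spacedim},\dual)$ has dimension $m-1$ and $\frac{1}{k}\ExposedFace(\gamma_{\spacedim},\dual)$ has dimension $z$. When $m+z=\spacedim$ the two pieces meet in a single relative-interior point and the hull has dimension $(m-1)+z=\spacedim-1$, giving the stated facets; when some coordinate is fractional the pieces are disjoint, the hull has dimension at most $(m-1)+z+1=m+z=\spacedim-(\text{number of fractional coordinates})$, and this reaches $\spacedim-1$ only if exactly one coordinate is fractional --- which is impossible because then $\Norm{\dual}_{1}$ would equal $m$ plus a number strictly between $0$ and $1$ and could not be the integer $k$. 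With that integrality observation inserted, your argument closes; absent it, the classification of facets is incomplete.
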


Observe that (\ref{CDLDP.sec.3.2.thm.1.eq.1}) is precisely the exposed face
$\ExposedFace(\LpTopBall{\Ball}{1}{k},\dual)$. As noted
in~\cite{DezaHiriart-UrrutyPournin2021}, for any vector $\dual$ in
$\{-1,0,1\}^{\spacedim}$, the affine hulls of
$\ExposedFace(\beta_{\spacedim},\dual)$ and
$\ExposedFace(\gamma_{\spacedim},\dual)$ are orthogonal subspaces of
$\mathbb{R}^{\spacedim}$. More precisely, if we denote by $k$ the number of nonzero
coordinates of $\dual$, hence $k=\lzero\np{\dual}$, then the two polytopes
$\ExposedFace(\beta_{\spacedim},\dual)$ and
$\ExposedFace(\gamma_{\spacedim},\dual)/k$ intersect in a single point that belongs
to the relative interior of both of these polytopes. We then get, as an
immediate consequence of Theorem \ref{CDLDP.sec.3.2.thm.1}, the following description of all the proper faces of
$\LpTopBall{\Ball}{1}{k}$.

\begin{corollary}\label{CDLDP.sec.3.2.cor.1}
  The proper faces of $\LpTopBall{\Ball}{1}{k}$ are precisely the sets of the form
  \begin{equation}
    \conv\biggl(F\cup{\frac{1}{k}G}\biggr)
  \end{equation}
  where, for some vector $\dual$ in $\{-1,0,1\}^{\spacedim}$ with exactly $k$ nonzero coordinates,
  \begin{enumerate}
  \item[(i)] $F$ and $G$ are exposed faces of $\ExposedFace(\beta_{\spacedim},\dual)$
    and $\ExposedFace(\gamma_{\spacedim},\dual)$, respectively,
  \item[(ii)] $F$ and $G$ are not both empty, 
  \item[(iii)] $F$ is equal to $\ExposedFace(\beta_{\spacedim},\dual)$ if and only if $G$
    is equal to $\ExposedFace(\gamma_{\spacedim},\dual)$.
  \end{enumerate}
\end{corollary}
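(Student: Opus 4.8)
The plan is to derive the corollary from Theorem~\ref{CDLDP.sec.3.2.thm.1} by the standard reduction "every proper face of a polytope is a face of one of its facets, and conversely every face of a facet is a face of the polytope". So it suffices to describe the face lattice of a single facet of $\LpTopBall{\Ball}{1}{k}$ and then collect. By Theorem~\ref{CDLDP.sec.3.2.thm.1}, a facet has the form $\conv(A\cup\tfrac{1}{k}B)$ with $A=\ExposedFace(\beta_{\spacedim},\dual)$ and $B=\ExposedFace(\gamma_{\spacedim},\dual)$ for some $\dual\in\{-1,0,1\}^{\spacedim}$ with exactly $k$ nonzero coordinates; as recalled just before the corollary, the affine hulls of $A$ and $\tfrac{1}{k}B$ are orthogonal and these two polytopes meet in a single point $c$ lying in the relative interior of each. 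Translating by $-c$ turns the facet into the \emph{free sum} $A'\oplus B':=\conv(A'\cup B')$, where $A'=A-c$ and $B'=\tfrac{1}{k}B-c$ span complementary linear subspaces $U$ and $V$ of $\RR^{\spacedim}$ and $0\in\operatorname{relint}A'\cap\operatorname{relint}B'$. So the first step is this translation/normalization; the second is a general fact about free sums.

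The core step is to compute the faces of $A'\oplus B'$ under the hypothesis $0\in\operatorname{relint}A'\cap\operatorname{relint}B'$. I would argue directly with supporting functionals. Any linear form $\ell$ on $\RR^{\spacedim}$ restricts to forms $\ell_U$ on $U$ and $\ell_V$ on $V$, and since $A'\oplus B'=\conv(A'\cup B')$ one has $\max_{A'\oplus B'}\ell=\max\{m_1,m_2\}$ with $m_1=\max_{A'}\ell_U$ and $m_2=\max_{B'}\ell_V$; both are $\ge 0$ because $0\in A'$ and $0\in B'$. Because $0$ is in the relative interior of $A'$, $m_1=0$ forces $\ell_U\equiv 0$ on $U$ (a linear form attaining its maximum at a relative interior point is constant there), whence $\ExposedFace(A',\ell_U)=A'$; and if $m_1>0$ then $\ell_U$ is nonconstant on $A'$ and $\ExposedFace(A',\ell_U)$ is a \emph{proper} (nonempty) face of $A'$; symmetrically for $B'$. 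A short case analysis on the sign of $m_1-m_2$ — a convex combination $\lambda x_1+(1-\lambda)x_2$ with $x_1\in A'$, $x_2\in B'$ attains $\max\{m_1,m_2\}$ only if each summand that is "active" is itself maximized — shows that $\ExposedFace(A'\oplus B',\ell)=\conv(F_1\cup F_2)$, where $F_1$ is $\ExposedFace(A',\ell_U)$ if $m_1\ge m_2$ and $\emptyset$ otherwise, and symmetrically $F_2$; moreover $m_1=m_2=0$ is the only case giving the improper face $A'\oplus B'$, and in that case $F_1=A'$, $F_2=B'$. Conversely, given any face $F_1\ne A'$ of $A'$ and any face $F_2\ne B'$ of $B'$ not both empty, one exposes $F_1$ in $A'$ and $F_2$ in $B'$ by forms normalized to have maximum $1$ (possible since such a face is exposed with positive maximal value, using again $0\in\operatorname{relint}$), or takes the zero form on a side contributing $\emptyset$; the resulting $\ell$ exposes $\conv(F_1\cup F_2)$. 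Hence the proper nonempty faces of $A'\oplus B'$ are exactly the $\conv(F_1\cup F_2)$ with $F_1$ a face of $A'$, $F_1\ne A'$, $F_2$ a face of $B'$, $F_2\ne B'$, and $F_1,F_2$ not both empty.

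Translating back by $+c$, and using that the faces of $\tfrac{1}{k}B$ are exactly $\tfrac{1}{k}$ times the faces of $B$, this says: the facet $\conv(A\cup\tfrac{1}{k}B)$ itself is recovered as $\conv(F\cup\tfrac1k G)$ with $F=A$, $G=B$, while its other proper nonempty faces are the $\conv(F\cup\tfrac{1}{k}G)$ with $F$ a face of $\ExposedFace(\beta_{\spacedim},\dual)$ distinct from $\ExposedFace(\beta_{\spacedim},\dual)$, $G$ a face of $\ExposedFace(\gamma_{\spacedim},\dual)$ distinct from $\ExposedFace(\gamma_{\spacedim},\dual)$, and $F,G$ not both empty. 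Ranging over all admissible $\dual$, and using that the proper faces of $\LpTopBall{\Ball}{1}{k}$ are precisely the facets together with all proper nonempty faces of facets, one gets that they are exactly the $\conv(F\cup\tfrac1k G)$ with: $F$ a face of $\ExposedFace(\beta_{\spacedim},\dual)$, $G$ a face of $\ExposedFace(\gamma_{\spacedim},\dual)$; $F$ and $G$ not both empty; and the two conditions "$F=\ExposedFace(\beta_{\spacedim},\dual)$" and "$G=\ExposedFace(\gamma_{\spacedim},\dual)$" holding simultaneously or not at all — which is condition (iii). Finally, to match the statement verbatim I would invoke the elementary fact that every nonempty face of a polytope is the intersection of the facets containing it, hence exposed (and the empty face is counted as exposed by convention), so "face" may be replaced by "exposed face" throughout, giving (i)--(iii).

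The main obstacle I expect is the clean verification of the double equivalence "$F_1=A'\iff F_2=B'$" in the free-sum case analysis, together with keeping track of the degenerate cases without tacitly assuming the summands are full-dimensional in $\RR^{\spacedim}$ — in particular $k=\spacedim$, where $\ExposedFace(\gamma_{\spacedim},\dual)$ is a single vertex and $B'=\{0\}$, and $k=1$, where $\ExposedFace(\beta_{\spacedim},\dual)$ is a single vertex and $A'=\{0\}$; in those cases $\LpTopBall{\Ball}{1}{k}$ degenerates to $\beta_{\spacedim}$ or $\gamma_{\spacedim}$ and the description must still read off correctly. Everything else is either quoted (Theorem~\ref{CDLDP.sec.3.2.thm.1} and the orthogonality/relative-interior observation preceding the corollary) or a routine supporting-functional computation.
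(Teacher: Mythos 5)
Your proposal is correct and follows exactly the route the paper intends: the paper states the corollary as an ``immediate consequence'' of Theorem~\ref{CDLDP.sec.3.2.thm.1} together with the observation that $\ExposedFace(\beta_{\spacedim},\dual)$ and $\frac{1}{k}\ExposedFace(\gamma_{\spacedim},\dual)$ have orthogonal affine hulls and meet at a single common relative-interior point, which is precisely the free-sum structure you exploit. Your supporting-functional case analysis on $\max\{m_1,m_2\}$ supplies the details the paper leaves implicit, and it correctly yields conditions (i)--(iii), including the degenerate cases $k=1$ and $k=\spacedim$.
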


Corollary \ref{CDLDP.sec.3.2.cor.1} completely describes the faces of
$\LpTopBall{\Ball}{1}{k}$ (which is further enumerated in
\cite{DezaHiriart-UrrutyPournin2021}). Since $\LpSupportBall{\Ball}{\infty}{k}$ is
the polar of $\LpTopBall{\Ball}{1}{k}$, the normal cones of
$\LpSupportBall{\Ball}{\infty}{k}$ are precisely the cones spanned by the exposed faces of
$\LpTopBall{\Ball}{1}{k}$ and, as a consequence, Corollary
\ref{CDLDP.sec.3.2.cor.1} also describes the normal fan of
$\LpSupportBall{\Ball}{\infty}{k}$.
Since $\LpTopBall{\Ball}{1}{k}$ is polytopal, all its proper faces are exposed,
and the same holds by polarity for $\LpSupportBall{\Ball}{\infty}{k}$. Moreover,
because both unit balls are polytopal, faces are in a one-to-one correspondence
with normal cones. Thus, one can recover either from the other.

\subsection{The case when $1<p<+\infty$}
\label{The_case_where_1<p<+infty}

When the source norm is the $\ell_p$-norm where $1<p<+\infty$, the first row of
Table~\ref{tab:Examples_of_generalized_top-k_and_k-support_dual_norms} tells us
that we should study the unit balls of the \lptopnorm{q}{k}, with $1/p+1/q=1$,
and its dual \lpsupportnorm{p}{k}. Thus, we will describe the exposed faces and
the normal cones of $\LpSupportBall{\Ball}{p}{k}$. The exposed faces and normal
cones of $\LpTopBall{\Ball}{p}{k}$ can then be recovered by polarity. The balls
$\LpSupportBall{\Ball}{2}{2}$ and $\LpTopBall{\Ball}{2}{2}$ are shown on
Figure~\ref{CDLDP.sec.3.3.fig.1} when ${\spacedim}$ is equal to $3$. One can see
that $\LpTopBall{\Ball}{2}{2}$ is the intersection of three cylinders colored
yellow, orange, and red. By duality, $\LpSupportBall{\Ball}{2}{2}$ has eight
triangular faces. While these triangular faces are exposed, their edges, shown
as dotted lines, are faces of $\LpSupportBall{\Ball}{2}{2}$ that are not
exposed.

\begin{figure}
  \begin{center}
    \mbox{\includegraphics[scale=1.3]{DualL2alt.pdf}} \qquad
    \mbox{\includegraphics[scale=1.3]{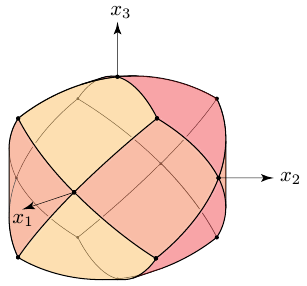}}    
  \end{center}
  \caption{Unit balls $\LpSupportBall{\Ball}{2}{2}$ (left) and
      $\LpTopBall{\Ball}{2}{2}$ (right) when $\spacedim=3$}
  \label{CDLDP.sec.3.3.fig.1}
\end{figure}

The face of $\Ball_{p}$ exposed by a nonzero vector $\dual$ from
$\mathbb{R}^{\spacedim}$ can be recovered from the equality case of H{\"o}lder's
inequality. Indeed, by this inequality,
\begin{equation}\label{eq:Holder}
\bigl|\proscal{z\!}{\!\!y}\bigr|\leq\|\dual\|_q
\end{equation}
for any point~$z$ in $\Ball_{p}$ with equality when
\begin{equation}\label{eq:Holdereq}
  |z_i|^p=\lambda|\dual_i|^q
\end{equation}
for some nonnegative number~$\lambda$ and all integers $i \in \Vset$, where
\begin{equation}
  \frac{1}{p}+\frac{1}{q}=1
  \eqfinp
\end{equation}

Now assume that~$z$ belongs to the face of $\Ball_{p}$ exposed by $\dual$. In that
case, $\|z\|_p$ is equal to~$1$ and (\ref{eq:Holder}) must turn into an
equality. In particular, there exists a nonnegative number~$\lambda$ satisfying
(\ref{eq:Holdereq}) for every $i$. The value of~$\lambda$ can be recovered by summing
(\ref{eq:Holdereq}) over $i$:
\begin{equation}
  \lambda=\frac{1}{\|\dual\|_q^q}
  \eqfinp
\end{equation}
As a consequence, $z$ must be the point such that, when $\dual_i$ is equal to $0$
then so is $z_i$ and when $\dual_i$ is nonzero, then $z_i$ is the number with the
same sign than $\dual_i$ satisfying
\begin{equation}
  |z_i|=\biggl(\frac{|\dual_i|}{\|\dual\|_{q}}\biggr)^{\!q/p}
  \eqfinp
  \label{vp_def}
\end{equation}
Hence, the face of $\Ball_{p}$ exposed by $\dual$ is made of just the point~$z$. From now
on, we shall denote~$z$ by $v_p(\dual)$. Note that $v_2(\dual)$ and $\dual$ are
colinear, more precisely,
\begin{equation}
  v_2(\dual)=\frac{\dual}{\|\dual\|_2}
  \eqfinp
\end{equation}
We are now able to characterize the exposed faces of $\LpSupportBall{\Ball}{p}{k}$.
\begin{theorem}\label{th:faces-of-BTstar2k}
For any number $p$ satisfying $1<p<+\infty$, the face of
  $\LpSupportBall{\Ball}{p}{k}$ exposed by a given nonzero vector~$\dual$ is the
  convex hull of all the points of the form $v_p(\pi_{\IndexSubset\opt}\dual)$ where
  \begin{equation}
    \IndexSubset\opt\in \argmax_{\cardinal{\IndexSubset}\leq k}{\|{\projection_{\IndexSubset}\dual}}\|_{1}
  \end{equation}
is obtained by ordering the components of the vector \( \module{\dual} \) and then
       taking indices of the~$k$ largest entries.
\end{theorem}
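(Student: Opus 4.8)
The plan is to read the statement off from Proposition~\ref{pr:Support_identification_generalized_k-support_norm_orthant-strictly_monotonic}, specialized to the source norm $\TripleNorm{\cdot}=\Norm{\cdot}_{p}$, and then to turn its abstract right-hand side into the concrete one of the theorem by invoking the H\"older equality analysis carried out just above. Three facts are needed: that $\Norm{\cdot}_{p}$ is orthant-strictly monotonic when $1<p<+\infty$; that, for such $p$, the face of $\Ball_{p}$ exposed by any nonzero vector $y$ is the singleton $\na{v_{p}(y)}$, with $v_{p}(y)$ as in~\eqref{vp_def}; and that, for a vector $\dual$, the subsets realizing $\max_{\cardinal{\IndexSubset}\leq k}\Norm{\ProjectionIndexSubset\dual}_{q}$ are exactly those realizing $\max_{\cardinal{\IndexSubset}\leq k}\Norm{\ProjectionIndexSubset\dual}_{1}$, where $1/p+1/q=1$.

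First, the dual norm of $\Norm{\cdot}_{p}$ is $\Norm{\cdot}_{q}$ and, taking $\Primal=\TripleNormBall=\Ball_{p}$, one has $\SupportDualNorm{\TripleNormBall}{k}=\LpSupportBall{\Ball}{p}{k}$ by~\eqref{eq:generalized_k-support_dual_norm_unit_ball}. Since $\Norm{\cdot}_{p}$ is orthant-strictly monotonic for $1<p<+\infty$, Proposition~\ref{pr:Support_identification_generalized_k-support_norm_orthant-strictly_monotonic} applies and~\eqref{eq:Face_description_generalized_k-support_norm_orthant-strictly_monotonic} yields
\[
  \ExposedFace(\LpSupportBall{\Ball}{p}{k},\dual)=
  \closedconvexhull\Bsetco{\ExposedFace(\Ball_{p},\ProjectionIndexSubsetOpt\dual)}
  {\IndexSubset\opt\in\argmax_{\cardinal{\IndexSubset}\leq k}\Norm{\ProjectionIndexSubset\dual}_{q}}
  \eqfinp
\]
Moreover, the proof of that proposition established that $\Norm{\ProjectionIndexSubsetOpt\dual}_{q}>0$ for every $\IndexSubset\opt$ in this $\argmax$, so each $\ProjectionIndexSubsetOpt\dual$ is nonzero and the H\"older equality discussion preceding the theorem gives $\ExposedFace(\Ball_{p},\ProjectionIndexSubsetOpt\dual)=\na{v_{p}(\ProjectionIndexSubsetOpt\dual)}$. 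Substituting these singletons, and noting that $\Vset$ has only finitely many subsets so that the closed convex hull of the resulting finite point set equals its convex hull, we obtain
\[
  \ExposedFace(\LpSupportBall{\Ball}{p}{k},\dual)=
  \conv\Bsetco{v_{p}(\ProjectionIndexSubsetOpt\dual)}
  {\IndexSubset\opt\in\argmax_{\cardinal{\IndexSubset}\leq k}\Norm{\ProjectionIndexSubset\dual}_{q}}
  \eqfinp
\]

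It then remains to replace $\Norm{\cdot}_{q}$ by $\Norm{\cdot}_{1}$ in the index condition, which is the only genuinely computational point. Setting $a_{i}=\module{\dual_{i}}$, both $\Norm{\ProjectionIndexSubset\dual}_{q}^{q}=\sum_{i\in\IndexSubset}a_{i}^{q}$ and $\Norm{\ProjectionIndexSubset\dual}_{1}=\sum_{i\in\IndexSubset}a_{i}$ have the form $\sum_{i\in\IndexSubset}\phi(a_{i})$ with $\phi$ strictly increasing on $[0,+\infty[$ and $\phi(0)=0$. For any such $\phi$, ordering the $a_{i}$ non-increasingly and letting $\tau$ be their $k$-th largest value, a standard greedy argument shows that a subset $\IndexSubset$ with $\cardinal{\IndexSubset}\leq k$ maximizes $\sum_{i\in\IndexSubset}\phi(a_{i})$ if and only if either $\tau>0$, $\cardinal{\IndexSubset}=k$, and $\bsetco{i}{a_{i}>\tau}\subset\IndexSubset\subset\bsetco{i}{a_{i}\geq\tau}$, or $\tau=0$ and $\bsetco{i}{a_{i}>0}\subset\IndexSubset$. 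Since this description does not involve $\phi$, the two $\argmax$ sets coincide, and the proof is complete.

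I expect the only place requiring real care to be this last step: handling the tie-breaking at the threshold $\tau$ and the case $\lzero\np{\dual}<k$ carefully enough that the two $\argmax$ collections are shown to be \emph{equal} rather than merely to share a member. Everything else is a direct application of results already established in the paper.
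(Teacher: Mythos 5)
Your proof is correct and follows essentially the same route as the paper's: apply Proposition~\ref{pr:Support_identification_generalized_k-support_norm_orthant-strictly_monotonic} to the orthant-strictly monotonic source norm $\Norm{\cdot}_{p}$, identify each $\ExposedFace(\Ball_{p},\ProjectionIndexSubsetOpt\dual)$ as the singleton $\na{v_{p}(\ProjectionIndexSubsetOpt\dual)}$ via the H\"older equality analysis, and observe that the $\argmax$ over subsets is the same for any $\ell_{r}$-norm. You are in fact slightly more careful than the paper on two minor points --- writing the dual norm as $\Norm{\cdot}_{q}$ rather than $\Norm{\cdot}_{p}$ in the intermediate $\argmax$, and spelling out the greedy characterization that makes the two $\argmax$ collections equal --- but the argument is the same.
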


\begin{proof}
  Using
  Equation~\eqref{eq:Face_description_generalized_k-support_norm_orthant-strictly_monotonic}
  for the orthant-strictly monotonic $\ell_p$-norm, one obtains that
  \begin{equation*}
    \ExposedFace\Bigl(\LpSupportBall{\Ball}{p}{k},\dual\Bigr)=
    \closedconvexhull
    \Bsetco{ \ExposedFace(\Ball_{p},\pi_{\IndexSubset\opt}\dual) }
    {\IndexSubset\opt \in 
      \argmax_{\cardinal{\IndexSubset}\leq k} \norm{\projection_{\IndexSubset}\dual}_p}
    \eqfinp
  \end{equation*}
  Consider any size $k$ subset $\IndexSubset\opt$ of
  $\llbracket1,{\spacedim}\rrbracket$ that belongs to
  ${\argmax_{\cardinal{\IndexSubset}\leq k}
    \norm{\projection_{\IndexSubset}\dual}_p}$. As seen in
  Equation~\eqref{vp_def}, the face of $\Ball_{p}$ exposed by
  $\pi_{\IndexSubset\opt}\dual$ is the vertex
  $v_p(\pi_{\IndexSubset\opt}\dual)$. Hence, we get that
  \begin{equation*}
    \ExposedFace\Bigl(\LpSupportBall{\Ball}{p}{k},\dual\Bigr)=
    \closedconvexhull
    \Bsetco{ v_p(\pi_{\IndexSubset\opt}\dual)}
    {\IndexSubset\opt \in 
      \argmax_{\cardinal{\IndexSubset}\leq k} \norm{\projection_{\IndexSubset}\dual}_p}
    \eqfinp
  \end{equation*}
  %
As the \( \ell_{p} \)-norm \( \| \cdot \|_{p} \) is monotonic and
    symmetric, the above argmax can be obtained by
       ordering the components of the vector \( \module{\dual} \) and then
       taking indices of the~$k$ largest entries
       (see also the comment following the proof of Theorem~\ref{th:support_identification}). 
As the argument is valid for $p=1$, we get that 
  \begin{equation*}
    \argmax_{\cardinal{\IndexSubset}\leq k} \|\projection_{\IndexSubset}\dual\|_p
    = \argmax_{\cardinal{\IndexSubset}\leq k}{\|{\projection_{\IndexSubset}\dual}}\|_{1}\mbox{,}
  \end{equation*}
  which completes the proof.
\end{proof}

Let us first introduce some notation. Consider an integer $k$ in
$\llbracket1,{\spacedim}\rrbracket$ and a nonzero vector $\dual$ from
$\mathbb{R}^{\spacedim}$.  Denote by $m_k(\dual)$ the largest number such that the set
\begin{equation*}
  \bsetco{i\in\Vset}{|\dual_i|\geq{m_k(\dual)}}
\end{equation*}
contains at least $k$ indices. In other words,
\begin{equation*}
  m_k(\dual) = \sup \Bsetco{\lambda \geq 0}{\bcardinal{\nsetco{i\in \Vset}{|\dual_i|\geq{\lambda}}} \geq k }
  \eqfinp 
\end{equation*}
We will refer by $L_k(\dual)$ to the set of the indices $i$ such that $|\dual_i|$ is
greater than $m_k(\dual)$ and by $\overline{L}_k(\dual)$ the set of the indices
$i$ such that $|\dual_i|$ is greater than or equal to $m_k(\dual)$:
\begin{subequations}
  \begin{align}
    L_k(\dual) &= \defsetco{i \in \ic{1,{\spacedim}} }{|\dual_i| > m_k(\dual)}
                 \eqfinv 
    \\
    \overline{L}_k(\dual) &= \defsetco{i \in \ic{1,{\spacedim}} }{|\dual_i| \geq m_k(\dual)}
                            \eqfinp              
  \end{align}  
\end{subequations}
The following statement is an immediate consequence of these definitions.

\begin{proposition}\label{CDLDP.sec.3.3.prop.1}
  For any integer $k$ in $\llbracket1,{\spacedim}\rrbracket$ and any nonzero
  vector $\dual$ in $\mathbb{R}^{\spacedim}$,
  we have that 
  \[
  L_k(\dual)=\bigcap_{\IndexSubset\opt}\IndexSubset\opt
\eqsepv 
  \overline{L}_k(\dual)=\bigcup_{\IndexSubset\opt}\IndexSubset\opt
\eqfinv
  \]
  where the union and the intersection range over the elements
  $\IndexSubset\opt$ of
  $\argmax_{\cardinal{\IndexSubset}\leq k}
  \norm{\projection_{\IndexSubset}\dual}_1$.
\end{proposition}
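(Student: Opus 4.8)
The plan is to unwind the definitions of $m_k(\dual)$, $L_k(\dual)$, and $\overline{L}_k(\dual)$ and compare them directly with the optimization problem $\argmax_{\cardinal{\IndexSubset}\leq k}\norm{\projection_{\IndexSubset}\dual}_1$. The key observation is that $\norm{\projection_{\IndexSubset}\dual}_1 = \sum_{i\in\IndexSubset}|\dual_i|$, so maximizing this over subsets $\IndexSubset$ of size at most $k$ is solved greedily: pick the $k$ indices with the largest values of $|\dual_i|$ (if $\dual$ has fewer than $k$ nonzero coordinates one simply takes all the nonzero ones, but since we may restrict to $\cardinal{\IndexSubset}\leq k$ and ties in value $0$ are irrelevant to the sum, this is harmless; in any case $m_k(\dual)$ and the sets are still well defined). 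The quantity $m_k(\dual)$ is precisely the $k$-th largest value of $|\dual_i|$ in the sorted order from Table~\ref{tab:Examples_of_generalized_top-k_and_k-support_dual_norms}, i.e., $m_k(\dual)=|\dual_{\nu(k)}|$ for the permutation $\nu$ appearing there.

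First I would show $\bigcap_{\IndexSubset\opt}\IndexSubset\opt = L_k(\dual)$. Every optimal $\IndexSubset\opt$ must contain all indices $i$ with $|\dual_i| > m_k(\dual)$: if some such $i$ were missing, then $\IndexSubset\opt$ (having at most $k$ elements) would contain some index $j$ with $|\dual_j|\leq m_k(\dual) < |\dual_i|$ or have fewer than $k$ elements, and swapping $j$ for $i$ (or adding $i$) strictly increases $\sum_{i'\in\IndexSubset\opt}|\dual_{i'}|$, contradicting optimality. Hence $L_k(\dual)\subset\bigcap_{\IndexSubset\opt}\IndexSubset\opt$. Conversely, for any index $i\notin L_k(\dual)$, i.e., $|\dual_i|\leq m_k(\dual)$, I would exhibit an optimal $\IndexSubset\opt$ not containing $i$: take the $k$ indices of largest absolute value, breaking ties so as to avoid $i$ whenever possible — this is feasible exactly because $|\dual_i|$ is not strictly among the top $k$ values, as $\cardinal{\overline{L}_k(\dual)}\geq k$ by definition of $m_k(\dual)$ and one can choose $k$ elements of $\overline{L}_k(\dual)$ other than $i$ (using $L_k(\dual)$ together with enough of the remaining equal-valued indices). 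This shows $i\notin\bigcap_{\IndexSubset\opt}\IndexSubset\opt$, giving the reverse inclusion.

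Next I would show $\bigcup_{\IndexSubset\opt}\IndexSubset\opt = \overline{L}_k(\dual)$. For the inclusion $\bigcup_{\IndexSubset\opt}\IndexSubset\opt\subset\overline{L}_k(\dual)$: any optimal $\IndexSubset\opt$ consists only of indices $i$ with $|\dual_i|\geq m_k(\dual)$, since including an index of value strictly below $m_k(\dual)$ would be sub-optimal (by the same swap argument, such an index could be exchanged for one in $\overline{L}_k(\dual)\setminus\IndexSubset\opt$, which is nonempty when $\cardinal{\overline{L}_k(\dual)} > \cardinal{\IndexSubset\opt\cap\overline{L}_k(\dual)}$, forced by $\cardinal{\overline{L}_k(\dual)}\geq k$). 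Conversely, every $i\in\overline{L}_k(\dual)$ lies in some optimal $\IndexSubset\opt$: start from $L_k(\dual)$ (which has fewer than $k$ elements, again by definition of $m_k$), add $i$, and complete to a size-$k$ set using any other indices of $\overline{L}_k(\dual)$; the resulting sum equals the maximum since it uses the $k$ largest attainable values. Hence $\overline{L}_k(\dual)\subset\bigcup_{\IndexSubset\opt}\IndexSubset\opt$.

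The main obstacle, and the only place requiring genuine care, is the bookkeeping of ties: when several coordinates of $\dual$ share the value $m_k(\dual)$, there are multiple optimal $\IndexSubset\opt$, and one must argue that an index of value exactly $m_k(\dual)$ is \emph{in some} but \emph{not in all} of them — this is why we get the intersection $L_k(\dual)$ (strict inequality) and the union $\overline{L}_k(\dual)$ (non-strict inequality). The cleanest way to handle this is to note that the ``swap'' operation preserves optimality exactly when exchanging two indices of equal absolute value, so the optimal sets are obtained from one another by permuting indices within the level set $\{i : |\dual_i| = m_k(\dual)\}$; the indices forced into every optimal set are those strictly above $m_k(\dual)$, and the indices reachable by some optimal set are those at least $m_k(\dual)$. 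A degenerate case to mention in passing is when $\dual$ has strictly fewer than $k$ nonzero coordinates: then $m_k(\dual)=0$, $\overline{L}_k(\dual)=\Vset$, and every size-$k$ set containing $\Support{\dual}$ is optimal — consistent with the claimed formulas since $L_k(\dual)=\Support{\dual}$ equals the intersection and $\Vset$ equals the union.
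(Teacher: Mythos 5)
The paper gives no argument for this proposition (it is declared ``an immediate consequence of these definitions''), so yours is the only proof on the table. Your treatment of the union is correct, and so is the inclusion $L_k(\dual)\subset\bigcap_{\IndexSubset\opt}\IndexSubset\opt$: the swap/augment argument does show that every optimal set contains every index $i$ with $|\dual_i|>m_k(\dual)$.

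The gap is in the reverse inclusion $\bigcap_{\IndexSubset\opt}\IndexSubset\opt\subset L_k(\dual)$, precisely at the step where you claim that ``one can choose $k$ elements of $\overline{L}_k(\dual)$ other than $i$.'' That requires $\cardinal{\overline{L}_k(\dual)}\geq k+1$, whereas the definition of $m_k(\dual)$ only guarantees $\cardinal{\overline{L}_k(\dual)}\geq k$. When $m_k(\dual)>0$ and $\cardinal{\overline{L}_k(\dual)}=k$ --- the \emph{generic} case, where the $k$-th and $(k+1)$-th largest absolute values of the coordinates differ --- the argmax consists of the single set $\overline{L}_k(\dual)$, so the intersection equals $\overline{L}_k(\dual)$, which always strictly contains $L_k(\dual)$ (the index attaining the $k$-th largest absolute value lies in $\overline{L}_k(\dual)\setminus L_k(\dual)$). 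Concretely, for $\dual=(3,2,1)$ in $\mathbb{R}^3$ and $k=2$ one has $m_2(\dual)=2$, $L_2(\dual)=\{1\}$, $\overline{L}_2(\dual)=\{1,2\}$, and the unique maximizer is $\{1,2\}$, so the intersection is $\{1,2\}\neq L_2(\dual)$. Hence no proof can close this gap: the intersection identity is false as stated, and what holds in general is only $L_k(\dual)\subset\bigcap_{\IndexSubset\opt}\IndexSubset\opt$, with equality exactly when $m_k(\dual)=0$ or $\cardinal{\overline{L}_k(\dual)}\geq k+1$. (Only the union half is used later, in the proof of Theorem~\ref{CDLDP.sec.3.3.thm.2}, to conclude that $\overline{L}_k(y)=\overline{L}_k(z)$, so the downstream results are unaffected; but you should record the inclusion, not attempt the stated equality.)
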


Using these notations, Theorem \ref{th:faces-of-BTstar2k} allows to recover the
description of the normal cones of $\LpSupportBall{\Ball}{p}{k}$ given in
\cite[Proposition~23]{LeFranc-Chancelier-DeLara:2022}. In our setting, the
normal cone of $\LpSupportBall{\Ball}{p}{k}$ at one of its exposed faces $F$
refers to the closure of the set of the vectors $\dual$ in $\mathbb{R}^{\spacedim}$ such that
$F$ is the face of $\LpSupportBall{\Ball}{p}{k}$ exposed by $\dual$. The normal fans
of $\LpSupportBall{\Ball}{2}{2}$ and $\LpTopBall{\Ball}{2}{2}$ are illustrated
in Fig.~\ref{CDLDP.sec.3.3.fig.3} when $d$ is equal to~$3$. Fig.~\ref{CDLDP.sec.3.3.fig.3} only
shows a portion of these fans but both can be reconstructed by symmetry.
 
\begin{figure}
  \begin{center}
    \mbox{\includegraphics[scale=1.3]{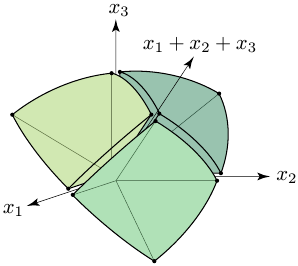}} \qquad
    \mbox{\includegraphics[scale=1.3]{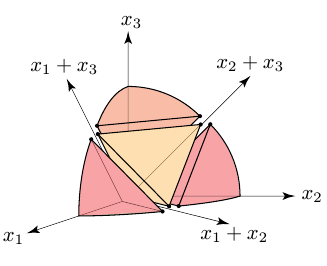}}    
  \end{center}
  \caption{Portions of the normal fan of $\LpSupportBall{\Ball}{2}{2}$ (left) and
     of $\LpTopBall{\Ball}{2}{2}$ (right) when $\spacedim=3$}
  \label{CDLDP.sec.3.3.fig.3}
\end{figure}

\begin{theorem}[{{\cite[Proposition~23]{LeFranc-Chancelier-DeLara:2022}}}]
  \label{CDLDP.sec.3.3.thm.2}
For any~$p$ satisfying $1<p<+\infty$, the normal cones of $\LpSupportBall{\Ball}{p}{k}$ at its exposed faces are the
  sets of the form
  \begin{equation}\label{CDLDP.sec.3.3.thm.2.eq.0}
    \overline{\mathrm{cone}}{ \bsetco{\dual\in\mathbb{R}^{\spacedim}}{\pi_{\overline{L}_k(z)}y=z,\,\overline{L}_k(\dual) =\overline{L}_k(z)}}
    \eqfinv 
  \end{equation}
  where $\overline{\mathrm{cone}}(X)$ denotes the closure of the cone spanned by $X$
  and~$z$ is a nonzero vector from $\mathbb{R}^{\spacedim}$ such that $z$ coincides with $\pi_{\overline{L}_k(z)}z$.
\end{theorem}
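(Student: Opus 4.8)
The plan is to combine Theorem~\ref{th:faces-of-BTstar2k}, which describes the exposed faces of $\LpSupportBall{\Ball}{p}{k}$ as convex hulls of the vertices $v_p(\pi_{\IndexSubset\opt}\dual)$ for $\IndexSubset\opt$ ranging over $\argmax_{\cardinal{\IndexSubset}\leq k}\|\projection_{\IndexSubset}\dual\|_1$, with Proposition~\ref{CDLDP.sec.3.3.prop.1}, which identifies $\overline{L}_k(\dual)$ and $L_k(\dual)$ as the union and intersection of those optimal supports. First I would fix a nonzero vector $\dual$ and, using formula~\eqref{vp_def} for $v_p$, observe that every vertex $v_p(\pi_{\IndexSubset\opt}\dual)$ is supported on $\IndexSubset\opt$, that all these vertices agree on the coordinates in $L_k(\dual)=\bigcap\IndexSubset\opt$ (where $|\dual_i|$ is strictly above the threshold $m_k(\dual)$, so $i$ lies in \emph{every} $\IndexSubset\opt$), and that they all vanish outside $\overline{L}_k(\dual)=\bigcup\IndexSubset\opt$. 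Consequently, the exposed face $F=\ExposedFace(\LpSupportBall{\Ball}{p}{k},\dual)$ satisfies $\pi_{\overline{L}_k(\dual)}F=F$, i.e.\ $F$ is contained in $\FlatRR_{\overline{L}_k(\dual)}$; moreover $F$ carries a distinguished unit vector $z$ (any point of $F$, or a specific vertex, depending on how one normalizes) with $z=\pi_{\overline{L}_k(z)}z$, which fixes the object appearing in~\eqref{CDLDP.sec.3.3.thm.2.eq.0}.

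Next I would establish the two inclusions that give equality of the normal cone $N(\LpSupportBall{\Ball}{p}{k},F)$ with $\overline{\mathrm{cone}}\,\{\dual':\pi_{\overline{L}_k(z)}\dual'=z,\ \overline{L}_k(\dual')=\overline{L}_k(z)\}$. For the inclusion ``$\supseteq$'': if $\dual'$ satisfies $\overline{L}_k(\dual')=\overline{L}_k(z)$ and $\pi_{\overline{L}_k(z)}\dual'=z$, then $\dual'$ is supported on $\overline{L}_k(z)$, its $k$ largest-modulus coordinates sit inside $\overline{L}_k(\dual')$ in the pattern prescribed by $z$, and by running Theorem~\ref{th:faces-of-BTstar2k} for $\dual'$ one checks that the optimal supports $\IndexSubset\opt$ for $\dual'$ are exactly those for $z$, hence $F$ is the face exposed by $\dual'$; closing the set under positive scaling and closure is harmless because exposed faces are invariant under $\dual'\mapsto\lambda\dual'$ for $\lambda>0$. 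For the reverse inclusion ``$\subseteq$'': take any $\dual'$ whose exposed face is $F$; then the collection $\{v_p(\pi_{\IndexSubset\opt}\dual')\}$ must coincide with the vertex set of $F$, which forces $\overline{L}_k(\dual')$ (the union of its optimal supports, by Proposition~\ref{CDLDP.sec.3.3.prop.1}) to equal $\overline{L}_k(z)$ and forces the normalized restriction $\pi_{\overline{L}_k(\dual')}v_p(\cdot)$-pattern to match that of $z$; a short argument using the injectivity of $v_p$ on each coordinate and the orthant-strict monotonicity of $\ell_p$ then yields $\pi_{\overline{L}_k(z)}\dual'$ proportional to $z$, and after taking the cone closure this gives membership in the right-hand side.

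The main obstacle I anticipate is the bookkeeping around the \emph{threshold ties}: when several coordinates of $\dual$ have modulus exactly $m_k(\dual)$, the optimal support $\IndexSubset\opt$ is not unique, the set $\overline{L}_k(\dual)$ is strictly larger than $L_k(\dual)$, and the face $F$ is a genuine convex hull of several distinct vertices rather than a single vertex. Verifying that the characterization in~\eqref{CDLDP.sec.3.3.thm.2.eq.0} correctly captures exactly this ``tie set'' --- i.e.\ that perturbing $\dual$ within the cone keeps the \emph{same} family of $\argmax$ supports, and that the condition $\pi_{\overline{L}_k(z)}\dual'=z$ pins down the common coordinates while the condition $\overline{L}_k(\dual')=\overline{L}_k(z)$ pins down which coordinates are allowed to be ``on the boundary'' --- is where the real care is needed, and is also why the statement requires the closure $\overline{\mathrm{cone}}$ rather than an open cone. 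Once the tie analysis is in place, matching up with the explicit description in \cite[Proposition~23]{LeFranc-Chancelier-DeLara:2022} is routine, since that reference proves the same fact by a direct computation; here it is being re-derived as a clean corollary of Theorem~\ref{th:faces-of-BTstar2k} and Proposition~\ref{CDLDP.sec.3.3.prop.1}.
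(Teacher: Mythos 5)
Your overall strategy is the one the paper actually follows: combine Theorem~\ref{th:faces-of-BTstar2k} with Proposition~\ref{CDLDP.sec.3.3.prop.1}, handle the degenerate case $m_k(z)=0$ (single-point face, half-line normal cone) separately, and for the general case compare the vertex sets of the face as seen from $z$ and from an arbitrary exposing vector $y'$. The ``$\supseteq$'' inclusion as you sketch it is fine. However, there is a genuine gap at the decisive step of the ``$\subseteq$'' inclusion. From $v_p(\pi_{\IndexSubset\opt}y')=v_p(\pi_{\IndexSubset\opt}z)$ you can only conclude, for each optimal support $\IndexSubset\opt$ \emph{separately}, that $\pi_{\IndexSubset\opt}y'=\alpha_{\IndexSubset\opt}\,\pi_{\IndexSubset\opt}z$ for some $\alpha_{\IndexSubset\opt}>0$: the map $v_p$ is invariant under positive rescaling (see~\eqref{vp_def}), so it is injective only up to a ray, and orthant-strict monotonicity does not help here. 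To get that $\pi_{\overline{L}_k(z)}y'$ is a single positive multiple of $z$ you must prove that all the scalars $\alpha_{\IndexSubset\opt}$ coincide, and your ``short argument'' does not do this.

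Concretely: if $L_k(z)\neq\emptyset$, every optimal support contains a common index $i$ with $z_i\neq 0$, so $\alpha_{\IndexSubset\opt}=y'_i/z_i$ is independent of $\IndexSubset\opt$ and you are done. But in the pure tie case $L_k(z)=\emptyset$ (all the relevant moduli equal $m_k(z)$), the optimal supports are \emph{all} the size-$k$ subsets of $\overline{L}_k(z)$ and two of them may be disjoint, so there is no shared coordinate to compare. The paper closes this by two separate arguments: for $k=1$, the optimal supports are all the singletons of $\overline{L}_1(z)$, which forces $L_1(y')=\emptyset$ as well, hence all coordinates $|y'_i|$ with $i\in\overline{L}_1(y')$ equal $m_1(y')$ and the ratios agree; for $k\geq 2$, one observes that any two nondisjoint size-$k$ subsets force equal scalars, and that the graph whose vertices are the size-$k$ subsets of $\overline{L}_k(z)$ with edges between nondisjoint pairs is connected, so equality propagates to all of them. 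You correctly flagged the tie case as the delicate point, but the actual mechanism resolving it --- this connectivity argument together with the separate $k=1$ treatment --- is missing from your proposal and is not supplied by injectivity of $v_p$ or monotonicity of the $\ell_p$-norm.
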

\begin{proof}
  Consider a nonzero vector~$z$ in $\mathbb{R}^{\spacedim}$ and denote
  \begin{equation*}
    G=F_\perp\Bigl(\LpSupportBall{\Ball}{p}{k},z\Bigr)\mbox{.}
  \end{equation*}
  According to Theorem \ref{th:faces-of-BTstar2k}, we get that 
  \begin{equation}\label{CDLDP.sec.3.3.thm.2.eq.0.5}
    G=\closedconvexhull\Bsetco{ v_p(\pi_{\IndexSubset\opt}z)}{\IndexSubset\opt\in\argmax_{\cardinal{\IndexSubset}\leq k} \norm{\projection_{\IndexSubset}z}_1}\mbox{.}
  \end{equation}
  It suffices to determine all the vectors $\dual$ such that $G$ is the face of
  $\LpSupportBall{\Ball}{p}{k}$ exposed by $\dual$. First observe that a subset
  $\IndexSubset\opt$ of $\llbracket1,{\spacedim}\rrbracket$ belongs to
  ${\argmax_{\cardinal{\IndexSubset}\leq k}{\|{\projection_{\IndexSubset}z}}\|_1}$ if
  and only if
  \begin{equation}\label{CDLDP.sec.3.3.thm.2.eq.1}
    L_k(z)\subset\IndexSubset\opt\subset\overline{L}_k(z)
  \end{equation}
  and either $m_k(z)$ is equal to $0$ or $\IndexSubset\opt$ has exactly $k$
  elements.
  In particular, by Theorem \ref{th:faces-of-BTstar2k}, we get that 
  \begin{equation}
    G=F_\perp\Bigl(\LpSupportBall{\Ball}{p}{k},\pi_{\overline{L}_k(z)}z\Bigr)
    \eqfinv
  \end{equation}
  and we can assume, without loss of generality, that $z$ coincides with
  $\pi_{\IndexSubset\opt}z$. We will treat two separate cases depending on whether
  $m_k(z)$ is equal to $0$ or not. First, if $m_k(z)$ is equal to $0$, then
  $\overline{L}_k(z)$ is equal to $\llbracket1,{\spacedim}\rrbracket$ and
  (\ref{CDLDP.sec.3.3.thm.2.eq.0}) just states that the normal cone of
  $\LpSupportBall{\Ball}{p}{k}$ at $G$ is the half-line spanned by~$z$. However,
  in that case, Theorem~\ref{th:faces-of-BTstar2k} states that $G$ is equal to
  $\{v_p(z)\}$. Hence if $\dual$ is another nonzero vector such that $G$ is the face
  of $\LpSupportBall{\Ball}{p}{k}$ exposed by $\dual$, the points $v_p(z)$ and
  $v_p(\dual)$ must coincide, which implies that $x$ and $\dual$ are multiples of one
  another by a positive factor and that the normal cone of
  $\LpSupportBall{\Ball}{p}{k}$ at $G$ is the half-line spanned by~$z$, as
  desired.

  Now assume that $m_k(z)$ is not equal to $0$ and consider a nonzero vector $\dual$
  such that $G$ is the face of $\LpSupportBall{\Ball}{p}{k}$ exposed by $\dual$. It
  follows from Theorem \ref{th:faces-of-BTstar2k} that
  \begin{equation}\label{CDLDP.sec.3.3.thm.2.eq.2}
    G=\closedconvexhull\Bsetco{ v_p(\pi_{\IndexSubset\opt}y)}{\IndexSubset\opt\in\argmax_{\cardinal{\IndexSubset}\leq k}
      \norm{\projection_{\IndexSubset}y}_1}
    \eqfinp
  \end{equation}
  Consider a subset $\IndexSubset\opt$ of $\llbracket1,{\spacedim}\rrbracket$ such that
  \[
  \IndexSubset\opt\in\argmax_{\cardinal{\IndexSubset}\leq k} \norm{\projection_{\IndexSubset}z}_1
  \eqfinp
  \]
  Since $m_k(z)$ is not equal to $0$, $\IndexSubset\opt$ contains exactly $k$
  elements and $|z_i|$ is nonzero when $i$ belongs to $\IndexSubset\opt$. By
  construction, a coordinate of $v_p(\pi_{\IndexSubset\opt}z)$ is nonzero if and
  only if the corresponding coordinate of $\pi_{\IndexSubset\opt}z$ is nonzero. As
  a consequence, according to (\ref{CDLDP.sec.3.3.thm.2.eq.0.5}),
  $v_p(\pi_{\IndexSubset\opt}z)$ is the unique vertex of $G$ contained in
  $\FlatRR_{\IndexSubset\opt}$. It then follows from
  (\ref{CDLDP.sec.3.3.thm.2.eq.2}) that
  \begin{equation}\label{CDLDP.sec.3.3.thm.2.eq.3}
    \argmax_{\cardinal{\IndexSubset}\leq k} \norm{\projection_{\IndexSubset}y}_1=\argmax_{\cardinal{\IndexSubset}\leq k}
    \norm{\projection_{\IndexSubset}z}_1
    \eqfinv
  \end{equation}
  and that, for every set $\IndexSubset\opt$ contained in
  $\argmax_{\cardinal{\IndexSubset}\leq k} \norm{\projection_{\IndexSubset}z}_1$,
  \begin{equation}\label{CDLDP.sec.3.3.thm.2.eq.4}
    v_p(\pi_{\IndexSubset\opt}y)=v_p(\pi_{\IndexSubset\opt}z)\eqfinp
  \end{equation}
  According to Proposition \ref{CDLDP.sec.3.3.prop.1} and to
  (\ref{CDLDP.sec.3.3.thm.2.eq.3}), $\overline{L}_k(z)$ and $\overline{L}_k(\dual)$
  coincide. Now recall that the normal cones of $B_p$ at its proper exposed~faces are
  half lines incident to the origin of $\mathbb{R}^{\spacedim}$. Hence, according to
  (\ref{CDLDP.sec.3.3.thm.2.eq.4}), there exists a positive number
  $\alpha_{\IndexSubset\opt}$ such that
  \begin{equation*}
    \pi_{\IndexSubset\opt}y=\alpha_{\IndexSubset\opt}\pi_{\IndexSubset\opt}z
    \eqfinp
  \end{equation*}
  It remains to show that the value of $\alpha_{\IndexSubset\opt}$ does not depend on
  $\IndexSubset\opt$. Indeed, this will imply that, up to a positive
  multiplicative factor, $\pi_{\overline{L}_k(z)}\dual$ coincides with~$z$ and results
  in the desired form (\ref{CDLDP.sec.3.3.thm.2.eq.0}) for the normal cone of $\LpSupportBall{\Ball}{p}{k}$ at
  $G$. If $L_k(z)$ is nonempty, this is immediate. Indeed,
  \begin{equation*}
    \alpha_{\IndexSubset\opt}=\frac{\dual_i}{z_i}
    \eqfinv
  \end{equation*}
  for any element $i$ of $L_k(z)$ and as a consequence,
  $\alpha_{\IndexSubset\opt}$ does not depend on $\IndexSubset\opt$. Therefore,
  assume that $L_k(z)$ is empty. In that case, $z_i$ is equal to $m_k(z)$ when
  $i$ belongs to $\overline{L}_k(z)$, and equal to $0$ otherwise. Moreover, the
  sets $\IndexSubset\opt$ are precisely the size $k$ subsets of
  $\overline{L}_k(z)$. If $k$ is equal to $1$, these sets are all the singletons
  from $\overline{L}_k(z)$ and this implies that $L_k(\dual)$ is also
  empty. Therefore $\dual_i$ is equal to $m_k(\dual)$ when $i$ belongs to
  $\overline{L}_k(\dual)$ which implies that $\alpha_{\IndexSubset\opt}$ does not depend
  on $\IndexSubset\opt$.

  Finally assume that $k$ is at least $2$ and observe that, for any two
  nondisjoint size $k$ subsets $\IndexSubset\opt$ and
  $\widetilde{\IndexSubset}\opt$ of $\overline{L}_k(z)$, the values of
  $\alpha_{\IndexSubset\opt}$ and $\alpha_{\widetilde{\IndexSubset}\opt}$ necessarily
  coincide. As $k$ is at least $2$, the graph whose vertices are the size $k$
  subsets of $\overline{L}_k(z)$ and whose edges connect two of them when they
  are nondisjoint is connected; it follows that $\alpha_{\IndexSubset\opt}$ does not
  depend on~$\IndexSubset\opt$.
\end{proof}

By analogy with the polytopal case, the \emph{normal fan} of
$\LpSupportBall{\Ball}{p}{k}$ refers to the set
$\NORMALCONE(\LpSupportBall{\Ball}{p}{k})$ of its normal cones. It is a
consequence of Theorem \ref{CDLDP.sec.3.3.thm.2} that the normal cones and,
therefore, the normal fan of $\LpSupportBall{\Ball}{p}{k}$ do not depend on~$p$
when $1<p<+\infty$. Interestingly, $\Ball_p$ has the same property: its normal cones
are $\{0\}$ and the half-lines incident to the origin independently on~$p$ when
$1<p<+\infty$. We show that $\NORMALCONE(\LpSupportBall{\Ball}{p}{k})$ refines
$\NORMALCONE(\LpSupportBall{\Ball}{\infty}{k})$ in the sense of \cite[Section
7]{Ziegler1995}.
\begin{corollary}\label{CDLDP.sec.3.2.cor.2}
For any~$p$ satisfying $1<p<+\infty$, every cone from $\NORMALCONE(\LpSupportBall{\Ball}{p}{k})$ is contained in a
  cone from $\NORMALCONE(\LpSupportBall{\Ball}{\infty}{k})$.
\end{corollary}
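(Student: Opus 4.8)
The plan is to reduce the statement to a claim about exposed faces and then feed that claim two results from the excerpt. First I would recall two standard dictionary facts. On the one hand, $\LpSupportBall{\Ball}{\infty}{k}$ is a polytope, so $\NORMALCONE(\LpSupportBall{\Ball}{\infty}{k})$ is a fan of \emph{closed} polyhedral cones, and for any nonempty face $H$ of $\LpSupportBall{\Ball}{\infty}{k}$ its normal cone $N_\infty(H)$ is the closure of the set $S_\infty(H)=\{\dual\neq 0: \ExposedFace(\LpSupportBall{\Ball}{\infty}{k},\dual)=H\}$. On the other hand, by Theorem~\ref{CDLDP.sec.3.3.thm.2} (and the definition of normal cone used there), a cone of $\NORMALCONE(\LpSupportBall{\Ball}{p}{k})$ is exactly a set $N_p(G)=\overline{S_p(G)}$ with $S_p(G)=\{\dual\neq 0 : \ExposedFace(\LpSupportBall{\Ball}{p}{k},\dual)=G\}$ for $G$ an exposed face of $\LpSupportBall{\Ball}{p}{k}$. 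Hence it suffices to prove the following: for a fixed exposed face $G$ of $\LpSupportBall{\Ball}{p}{k}$, all vectors of $S_p(G)$ expose one and the same face $H$ of $\LpSupportBall{\Ball}{\infty}{k}$. Indeed, this gives $S_p(G)\subseteq S_\infty(H)$, hence $N_p(G)=\overline{S_p(G)}\subseteq\overline{S_\infty(H)}=N_\infty(H)$, using that $N_\infty(H)$ is closed — which is precisely the asserted refinement.

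To prove this key claim I would combine two ingredients. The first is Proposition~\ref{pr:Support_identification_generalized_k-support_norm} applied to the source norm $\ell_\infty$, whose dual is $\ell_1$: for every nonzero $\dual$,
\begin{equation*}
  \ExposedFace(\LpSupportBall{\Ball}{\infty}{k},\dual) = \closedconvexhull \Bsetco{\pi_{\IndexSubset\opt}\bp{\ExposedFace(\Ball_\infty,\pi_{\IndexSubset\opt}\dual)}}{\IndexSubset\opt \in \argmax_{\cardinal{\IndexSubset}\leq k} \|\pi_{\IndexSubset}\dual\|_1}\eqfinp
\end{equation*}
Since the face of the hypercube $\Ball_\infty$ exposed by a vector depends only on that vector's sign pattern — in particular, it is unchanged when the vector is scaled by a positive factor — the right-hand side depends on $\dual$ only through the maximizing family $\argmax_{\cardinal{\IndexSubset}\leq k}\|\pi_{\IndexSubset}\dual\|_1$ and, for each member $\IndexSubset\opt$ of it, through the direction of $\pi_{\IndexSubset\opt}\dual$. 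The second ingredient is that the proof of Theorem~\ref{CDLDP.sec.3.3.thm.2} already establishes precisely this data: if $\dual$ and $\dual'$ both expose $G$, then $\argmax_{\cardinal{\IndexSubset}\leq k}\|\pi_{\IndexSubset}\dual\|_1 = \argmax_{\cardinal{\IndexSubset}\leq k}\|\pi_{\IndexSubset}\dual'\|_1$, and for every $\IndexSubset\opt$ in this common family the vectors $\pi_{\IndexSubset\opt}\dual$ and $\pi_{\IndexSubset\opt}\dual'$ are positive multiples of one another. Feeding the second ingredient into the first yields $\ExposedFace(\LpSupportBall{\Ball}{\infty}{k},\dual)=\ExposedFace(\LpSupportBall{\Ball}{\infty}{k},\dual')$ for all $\dual,\dual'\in S_p(G)$, which is the key claim.

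The main work, I expect, is extracting the second ingredient cleanly from the proof of Theorem~\ref{CDLDP.sec.3.3.thm.2}, which splits into two cases. When $m_k(\dual)=0$ the face $G$ reduces to the single vertex $v_p(\dual)$, and two vectors exposing $G$ are directly seen to be positive multiples of one another, so both conclusions hold trivially. When $m_k(\dual)\neq 0$, that proof obtains the equality of the maximizing families and the relations $\pi_{\IndexSubset\opt}\dual=\alpha_{\IndexSubset\opt}\,\pi_{\IndexSubset\opt}\dual'$ with $\alpha_{\IndexSubset\opt}>0$ from the equalities $v_p(\pi_{\IndexSubset\opt}\dual)=v_p(\pi_{\IndexSubset\opt}\dual')$ of vertices of $G$, together with the fact that the proper faces of $\Ball_p$ have half-line normal cones. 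One just has to record that, read across both cases, these are exactly the data that the formula in the first ingredient consumes; no new computation is needed, and the closure step at the end of the reduction is immediate since $\NORMALCONE(\LpSupportBall{\Ball}{\infty}{k})$ is a fan of closed cones.
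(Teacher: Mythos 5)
Your proposal is correct, but it follows a genuinely different route from the paper's. The paper argues by direct containment: it first reduces to $p=2$ using the $p$-independence of the normal fan, takes the explicit form of a normal cone $C$ from Theorem~\ref{CDLDP.sec.3.3.thm.2}, builds from the representative $z$ a sign vector $\dual\in\{-1,0,1\}^{\spacedim}$ with exactly $k$ nonzero coordinates, invokes Theorem~\ref{CDLDP.sec.3.2.thm.1} to get the corresponding facet $F$ of $\LpTopBall{\Ball}{1}{k}$, and then shows by an explicit decomposition $\primal=\primal'+\primal''$ that every generator of $C$ lies in the cone spanned by $F$, concluding by polarity. You instead prove the refinement in its functional form --- all vectors exposing a fixed face $G$ of $\LpSupportBall{\Ball}{p}{k}$ expose one and the same face of $\LpSupportBall{\Ball}{\infty}{k}$ --- by observing that the face formula of Proposition~\ref{pr:Support_identification_generalized_k-support_norm} for the source norm $\ell_\infty$ consumes only the family $\argmax_{\cardinal{\IndexSubset}\leq k}\|\pi_{\IndexSubset}\dual\|_1$ and the sign patterns of the $\pi_{\IndexSubset\opt}\dual$, and that precisely these data are shown to be constant on the set of exposing vectors of $G$ inside the proof of Theorem~\ref{CDLDP.sec.3.3.thm.2} (equality of the argmax families, and $\pi_{\IndexSubset\opt}\dual=\alpha_{\IndexSubset\opt}\pi_{\IndexSubset\opt}\dual'$ with $\alpha_{\IndexSubset\opt}>0$). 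This buys a cleaner conceptual explanation of why the refinement holds (the $\ell_\infty$-face depends on strictly less information than the $\ell_p$-face), works uniformly in $p$ without the reduction to $p=2$, and bypasses both Theorem~\ref{CDLDP.sec.3.2.thm.1} and the explicit cone decomposition. The costs are that you rely on intermediate equalities established inside the proof of Theorem~\ref{CDLDP.sec.3.3.thm.2} rather than on its statement, so a self-contained write-up should isolate them as a lemma, and that you should add a sentence dispatching the trivial cones ($\emptyset$ and $\{0\}$), which the paper's proof treats explicitly.
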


\begin{proof}
  Since the normal cones of $\LpSupportBall{\Ball}{p}{k}$ do not depend on $p$,
  it suffices to prove the statement when $p$ is equal to $2$. Consider a cone
  $C$ in $\NORMALCONE(\LpSupportBall{\Ball}{2}{k})$. If $C$ is empty or equal to
  $\{0\}$, then this is immediate. Assume that $C$ is the normal cone of
  $\LpSupportBall{\Ball}{2}{k}$ at an exposed face which we will denote by
  $\Face$. According to Theorem \ref{CDLDP.sec.3.3.thm.2}, there exists a
  nonzero vector~$z$ in $\mathbb{R}^{\spacedim}$ that coincides with
  $\pi_{\overline{L}_k(z)}z$ such that
  \begin{equation}\label{eq:ConeC}
    C= \closedcone{\bsetco{\dual\in\mathbb{R}^{\spacedim}}
      {\pi_{\overline{L}_k(z)}y=z,\overline{L}_k(\dual)=\overline{L}_k(z)}}
    \eqfinp
  \end{equation}
  Observe that if $m_k(z)$ is equal to $0$, then $\overline{L}_k(z)$ is equal to
  $\llbracket1,{\spacedim}\rrbracket$. In that case, $C$ is the half-line
  spanned by~$z$ and it is contained in a normal cone of
  $\LpSupportBall{\Ball}{\infty}{k}$. Therefore, we shall assume from now on that
  $m_k(z)$ is positive. Consider a size $k$ set $\IndexSubset\opt$ such that
  $L_k(z)\subset{\IndexSubset\opt}\subset\overline{L}_k(z)$. Since $m_k(z)$ is positive,
  $z_i$ is nonzero when $i$ belongs to $\IndexSubset\opt$. Denote by $\dual$ the
  vector such that
  \begin{equation}
    y_i=
    \begin{cases}
      -1 & \mbox{if }z_i<0\mbox{ and }i\in{\IndexSubset\opt} \eqfinv \\
      1 & \mbox{if }z_i>0\mbox{ and }i\in{\IndexSubset\opt} \eqfinv \\
      0 & \mbox{if }i\not\in{\IndexSubset\opt}\eqfinp
    \end{cases}
  \end{equation}
  By construction, $\dual$ has exactly $k$ nonzero coordinates. Denote
  \begin{equation}
    F=\convexhull\biggl(\ExposedFace(\beta_{\spacedim},\dual)\cup
    {\frac{1}{k}\ExposedFace(\gamma_{\spacedim},\dual)}\biggr)
    \eqfinp
  \end{equation}
  According to Theorem \ref{CDLDP.sec.3.2.thm.1}, $\Face$ is a facet of
  $\LpTopBall{\Ball}{1}{k}$. We will show that $C$ is contained in the cone
  spanned by $\Face$. By polarity, this cone belongs to
  $\NORMALCONE(\LpSupportBall{\Ball}{\infty}{k})$ and this will therefore prove the
  corollary. Consider a vector $\primal$ such that
  $\pi_{\overline{L}_k(z)}\primal$ is equal to~$z$ and $\overline{L}_k(x)$ to
  $\overline{L}_k(z)$. It suffices to show that $\primal$ belongs to the cone spanned
  by $\Face$. Indeed, since that cone is closed, it follows from
  (\ref{eq:ConeC}) that $C$ must be contained in it. Observe that $x$ can be
  decomposed as
  \begin{equation}
    x=\pi_{\IndexSubset\opt}\primal+\pi_{\llbracket1,{\spacedim}\rrbracket\mathord{\setminus}\IndexSubset\opt}\primal
    \eqfinp
  \end{equation}
  By construction, $x_i$ is equal to $m_k(z)y_i$ when $i$ belongs to
  $\IndexSubset\opt\mathord{\setminus}L_k(z)$. Hence,
  \begin{equation}
    \pi_{\IndexSubset\opt}\primal=\pi_{L_k(z)}\primal-m_k(z)\pi_{L_k(z)}y+m_k(z)y
    \eqfinp
  \end{equation}
  Since $\pi_{\overline{L}_k(z)}\primal$ is equal to~$z$ and $L_k(z)$ is a subset of
  $\overline{L}_k(z)$, the first term in the right-hand side is equal to
  $\pi_{L_k(z)}z$. As a consequence, $x$ can be rewritten into $x'+x''$ where
  \begin{equation}
    \left\{
    \begin{array}{l}
      \primal'=\pi_{L_k(z)}z-m_k(z)\pi_{L_k(z)}\dual  \eqfinv \\[\smallskipamount]
      \primal''=m_k(z)y+\pi_{\llbracket1,{\spacedim}\rrbracket\mathord{\setminus}\IndexSubset\opt}\primal\eqfinp
    \end{array}
  \right.
  \end{equation}
  Observe that $\primal'$ is contained in the cone spanned by
  $\ExposedFace(\beta_{\spacedim},\dual)$ and $\primal''$ in the cone spanned by
  $\ExposedFace(\gamma_{\spacedim},\dual)$. Hence, $x'+x''$ is contained in the
  cone spanned by $\Face$, as desired.
\end{proof}

We recall that the \emph{hypersimplex~$\delta_{d,k}$} is the convex hull of the
vertices of $[0,1]^{\spacedim}$ that have exactly $k$ nonzero coordinates. These
polytopes appear in algebraic combinatorics
\cite{GabrielovGelfandLosik1975,GelfandGoreskyMacPhersonSerganova1987} and in
convex geometry \cite{DezaHiriart-UrrutyPournin2021,Pournin2023,Pournin2024}. By
extension, we call \emph{hypersimplex} any polytope that coincides, up to a
bijective affine transformation, with the convex hull of such a subset of
vertices of the hypercube. It is observed in
\cite{DezaHiriart-UrrutyPournin2021} that $\LpSupportBall{\Ball}{\infty}{k}$ can be
decomposed into a union of hypersimplices with pairwise disjoint interiors and
that the proper faces of $\LpSupportBall{\Ball}{\infty}{k}$ are either hypersimplices
or isometric copies of $\LpSupportBall{\Ball}{\infty}{k}$, but for another ambient
dimension than~${\spacedim}$.
The situation for $\LpSupportBall{\Ball}{p}{k}$ is different as all of its
proper faces (including the nonexposed ones) are hypersimplices.

\begin{corollary}\label{CDLDP.sec.3.3.cor.1}
For any~$p$ satisfying $1<p<+\infty$,
all the proper faces of $\LpSupportBall{\Ball}{p}{k}$ are hypersimplices.
\end{corollary}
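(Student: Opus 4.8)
The plan is to use Theorem~\ref{th:faces-of-BTstar2k} to pin down the \emph{exposed} faces of $\LpSupportBall{\Ball}{p}{k}$ explicitly, and then to recover the (possibly non-exposed) proper faces by a general convexity argument. The first step records two elementary facts. First, every nonempty proper face $F$ of a full-dimensional compact convex body $K$ is a face of some proper exposed face of $K$: pick a point $u$ in the relative interior of $F$; then $u$ lies on $\partial K$ (a face meeting $\mathrm{int}\,K$ would be all of $K$), a supporting hyperplane at $u$ exposes a proper face $E\ni u$, and $F\subset E$ because for every $w\in F$ there is $w'\in F$ with $u$ in the open segment $(w,w')\subset K$, so $[w,w']\subset E$ as $E$ is a face of $K$; finally $F$ is a face of $E$ since $F$ is a face of $K$. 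Second, every nonempty proper face of a hypersimplex $\delta_{m,\ell}$ is itself a hypersimplex: such a face is cut out by fixing some coordinates of $[0,1]^{m}$ to $0$ and others to $1$, and deleting the fixed coordinates is a bijective affine map onto some $\delta_{m',\ell'}$. Since $\LpSupportBall{\Ball}{p}{k}$ is the unit ball of a norm, it is a full-dimensional compact convex body, so by these two facts it suffices to prove that every proper exposed face of $\LpSupportBall{\Ball}{p}{k}$ is a hypersimplex; and such a face equals $\ExposedFace\bigl(\LpSupportBall{\Ball}{p}{k},\dual\bigr)$ for some nonzero $\dual$ (an exposed face by a nonzero vector lies in a hyperplane, hence is proper).

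So fix a nonzero $\dual$ and put $L=L_k(\dual)$, $\overline L=\overline{L}_k(\dual)$ and $m=m_k(\dual)$. By Theorem~\ref{th:faces-of-BTstar2k}, together with the description of the maximizing sets recalled in the proof of Theorem~\ref{CDLDP.sec.3.3.thm.2}, the face $\ExposedFace\bigl(\LpSupportBall{\Ball}{p}{k},\dual\bigr)$ is the convex hull of the points $v_p(\pi_{\IndexSubset\opt}\dual)$, where $\IndexSubset\opt$ runs over the subsets with $L\subset\IndexSubset\opt\subset\overline L$ that, unless $m=0$, have exactly $k$ elements. In the degenerate case $m=0$ --- equivalently $\lzero\np{\dual}<k$ --- one has $\pi_{\IndexSubset\opt}\dual=\dual$ for every maximizing $\IndexSubset\opt$, so the face reduces to the single point $v_p(\dual)$, which is a hypersimplex.

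The crux is the generic case $m>0$, where $\cardinal L<k\leq\cardinal{\overline L}$ and the maximizing sets are exactly the $k$-element sets $\IndexSubset\opt$ with $L\subset\IndexSubset\opt\subset\overline L$. The essential observation is that $\norm{\pi_{\IndexSubset\opt}\dual}_q$ takes the \emph{same} value $N$ for all of them --- namely $N^q=\sum_{i\in L}|\dual_i|^q+(k-\cardinal L)\,m^q$ --- because $\IndexSubset\opt\setminus L$ is a $(k-\cardinal L)$-element subset of $\overline L\setminus L$, on each coordinate of which $|\dual_i|=m$. Hence, by the formula~\eqref{vp_def} for $v_p$,
\[
  v_p(\pi_{\IndexSubset\opt}\dual)=c+b\sum_{i\in\IndexSubset\opt\setminus L}\mathrm{sign}(\dual_i)\,e_i ,
\]
where $c=\sum_{i\in L}\mathrm{sign}(\dual_i)\bigl(|\dual_i|/N\bigr)^{q/p}e_i$ and $b=(m/N)^{q/p}>0$ are independent of $\IndexSubset\opt$, and $\IndexSubset\opt\setminus L$ ranges over all $(k-\cardinal L)$-element subsets of $\overline L\setminus L$. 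The composition of the translation by $-c$ with the invertible diagonal map $e_i\mapsto b\,\mathrm{sign}(\dual_i)\,e_i$ on the coordinates in $\overline L\setminus L$ (fixing the other coordinates) is a bijective affine transformation carrying the vertices $v_p(\pi_{\IndexSubset\opt}\dual)$ onto the set $\Bsetco{\sum_{i\in S}e_i}{S\subset\overline L\setminus L,\ \cardinal S=k-\cardinal L}$, i.e.\ onto the vertex set of $\delta_{m',\ell'}$ with $m'=\cardinal{\overline L\setminus L}$ and $\ell'=k-\cardinal L$. Since affine maps commute with convex hulls, $\ExposedFace\bigl(\LpSupportBall{\Ball}{p}{k},\dual\bigr)$ is a bijective affine image of that hypersimplex, hence a hypersimplex, which concludes the proof.

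The main obstacle is this generic step, and more precisely the uniformity $\norm{\pi_{\IndexSubset\opt}\dual}_q=N$ over all maximizing sets $\IndexSubset\opt$: it is exactly what forces all vertices of the exposed face to differ only through the fixed scalar $b$ and a $\pm1$ sign pattern supported on $\overline L\setminus L$, i.e.\ through a scaled, reflected copy of a hypersimplex vertex set. What makes this possible, and is special to $1<p<+\infty$, is that the face of $\Ball_p$ exposed by $\pi_{\IndexSubset\opt}\dual$ is a single vertex $v_p(\pi_{\IndexSubset\opt}\dual)$ (formula~\eqref{vp_def}); for $p=\infty$ that face is instead a possibly higher-dimensional face of the cube, so $\ExposedFace(\LpSupportBall{\Ball}{\infty}{k},\dual)$ is a convex hull of higher-dimensional pieces and need only be a hypersimplex \emph{or} a lower-dimensional copy of $\LpSupportBall{\Ball}{\infty}{k}$, whereas for $1<p<+\infty$ every proper face is a hypersimplex.
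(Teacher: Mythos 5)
Your proof is correct and follows essentially the same route as the paper's: Theorem~\ref{th:faces-of-BTstar2k} reduces the exposed case to the vertex set $\{v_p(\pi_{\IndexSubset\opt}\dual)\}$, the case split on whether $m_k(\dual)$ vanishes together with the constancy of $\norm{\pi_{\IndexSubset\opt}\dual}_q$ over the maximizing sets identifies the face with an affine copy of a hypersimplex, and non-exposed proper faces are handled as proper faces of exposed faces. You are merely more explicit than the paper about the affine bijection (the paper uses an orthogonal projection onto the coordinates in $\overline{L}_k(\dual)\setminus L_k(\dual)$) and about the two background facts that every proper face lies in a proper exposed face and that proper faces of hypersimplices are hypersimplices.
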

\begin{proof}
  Consider a proper face $\Face$ of $\LpSupportBall{\Ball}{p}{k}$. Let us first
  assume that $\Face$ is exposed by a vector $\dual$. If $m_k(\dual)$ is equal
  to $0$, then for every subset $\IndexSubset\opt$ of
  $\llbracket1,{\spacedim}\rrbracket$ that belongs to
  ${\argmax_{\cardinal{\IndexSubset}\leq k}{\|{\projection_{\IndexSubset}\dual}}\|_p}$,
  \begin{equation}
    \pi_{\IndexSubset\opt}\dual=\dual
  \end{equation}
  It then follows from Theorem \ref{th:faces-of-BTstar2k} that $\Face$ is the
  convex hull of a single point and therefore a $0$-dimensional
  hypersimplex. Let us now assume that $m_k(\dual)$ is positive.

  Denote $|\overline{L}_k(\dual)\mathord{\setminus}L_k(\dual)|$ by $n$ and let us
  identify $\mathbb{R}^n$ with the subspace of $\mathbb{R}^{\spacedim}$ made up of the points
  $\primal$ such that $\primal_i$ is equal to $0$ when $i$ belongs to either
  $L_k(\dual)$ or
  $\llbracket1,{\spacedim}\rrbracket\mathord{\setminus}\overline{L}_k(\dual)$. Denote
  by   $P$ the orthogonal projection of $\Face$ on $\mathbb{R}^n$. It follows from Theorem
  \ref{th:faces-of-BTstar2k} that $P/m_k(\dual)$
  is, up to flipping the signs of the coordinates, the convex hull of the
  points in $\{0,1\}^{\spacedim}$ with exactly $k-|L_k(\dual)|$ nonzero
  coordinates, as desired.

  Now assume that $\Face$ is not an exposed face of
  $\LpSupportBall{\Ball}{p}{k}$. In that case, $\Face$ is a proper face of an
  exposed face of $\LpSupportBall{\Ball}{p}{k}$. Hence by the above, $\Face$ is
  proper face of a hypersimplex. As all the proper faces of a hypersimplex are
  lower-dimensional hypersimplices, this completes the proof.
\end{proof}

\section{Conclusion}
Our original motivation, as exposed in~\S\ref{Motivation}, was to identify a class of norms which, when added as a
penalty term, promote sparsity in an optimization problem, \emph{but within a
  given sparsity budget}~$k$.
For this purpose, we have studied in Sect.~\ref{Support_identification} the
exposed faces of closed convex sets generated by $k$-sparse vectors, hence whose
extreme points are $k$-sparse.  We also have deduced support identification from
dual information.
Thus equipped, we have focused in
Sect.~\ref{The_case_of_generalized_top-k_and_k-support_norms} on the exposed faces of
the unit balls of so-called generalized $k$-support dual~norms, constructed from
$k$-sparse vectors and from a source norm.  In the cases of orthant-monotonic
and orthant-strictly monotonic source norms, we have obtained a characterization
of the intersection of the $k$-sparse vectors with the exposed faces of the $k$-support
norm.  Theorem~\ref{th:support_identification} makes the link with our original
motivation: we have provided dual conditions under which the primal optimal
solution of a minimization problem, penalized by a $k$-support dual~norm, is
$k$-sparse.
Going back to the original work of Tibshirani~\cite{Tibshirani:1996}, the
intuition --- behind proposing least-squares regression with an $\ell_1$-norm penalty
to achieve sparsity --- is that the kinks of the $\ell_1$-unit ball are located at
sparse points (see Figure~\ref{fig:Figure2_Tibshirani}).  In
Sect.~\ref{Geometry_of_lptopnorm{q}{k}_and__lpsupportnorm{p}{k}s_unit_balls}, we
have gone on in that direction by providing geometric descriptions of the faces
and normal cones of the unit balls of \lptopnorm{q}{k}s and
\lpsupportnorm{p}{k}s.  By contrast with the $\ell_1$-unit ball, faces intersect in
a subtle way, mixing kinks and smoother parts, as illustrated by
Figure~\ref{CDLDP.sec.3.3.fig.1} (left).
So, guided by sparsity, we have moved in this paper from optimization to the
geometry of unit balls.  \medskip

\noindent\textbf{Acknowledgements.} The third author is partially supported by
the Natural Sciences and Engineering Research Council of Canada Discovery Grant
program number RGPIN-2020-06846.

\appendix

\section{Proofs of Theorem~\ref{th:Support_identification} and Corollary~\ref{cor:Support_identification}} 
\label{Proofs}

We begin the section by proving the following preparatory lemma.

\begin{lemma}
  \label{lem:Support_identification}
  Consider a (primal) nonempty subset \( \Primal \) of \( \PRIMAL \).
  For any (dual) vector~$\dual$ contained in $\DUAL$ and any subset
  $\IndexSubset$ of $\Vset$, we have that
  \begin{subequations}
    \begin{align}
      \argmax_{\primal\in\ProjectionIndexSubset\np{\Primal}}\nscalv{\primal}{\dual}
      &=
    \ProjectionIndexSubset\Bp{\argmax_{\primalbis\in\Primal}\nscalv{\primalbis}{\ProjectionIndexSubset\dual}}
        \label{lem_eq:Support_identification_argmax}
\\        
    \max_{\primal\in\ProjectionIndexSubset\np{\Primal}}\nscalv{\primal}{\dual}
    &=
    \SupportFunction{\closedconvexhull\Primal}\np{\ProjectionIndexSubset\dual}
    \eqfinp 
    \label{lem_eq:Support_identification_max}
    \end{align}    
  \end{subequations}
\end{lemma}
\begin{proof}
 We prove~\eqref{lem_eq:Support_identification_argmax} by establishing 
  the following sequence of equivalences\footnote{%
    See Footnote~\ref{ft:optimal_solution} for the notation~$\opt$.}:
   \begin{align*}
    \primal\opt \in
    \argmax_{\primal\in\ProjectionIndexSubset\np{\Primal}}\nscalv{\primal}{\dual}
    \iff
    &
      \primal\opt \in \ProjectionIndexSubset\np{\Primal} \mtext{ and }
      \nscalv{\primal\opt}{\dual} \geq \nscalv{\primal}{\dual}
      \eqsepv \forall \primal\in\ProjectionIndexSubset\np{\Primal}
      \tag{by definition of~\( \argmax_{\primal\in\ProjectionIndexSubset\np{\Primal}}\nscalv{\primal}{\dual}\)}
    \\
    \iff
     &
       \primal\opt \in \ProjectionIndexSubset\np{\Primal} \mtext{ and }
       \nscalv{\primal\opt}{\dual} \geq \nscalv{\ProjectionIndexSubset\primalbis}{\dual}
       \eqsepv \forall \primalbis\in\Primal
       \tag{by definition of~\( \ProjectionIndexSubset\np{\Primal} \)}
    \\
    \iff
     &
       \primal\opt \in \ProjectionIndexSubset\np{\Primal} \mtext{ and }
      \nscalv{\ProjectionIndexSubset\primal\opt}{\dual} \geq \nscalv{\ProjectionIndexSubset\primalbis}{\dual}
      \eqsepv \forall \primalbis\in\Primal
      \intertext{because \(   \primal\opt \in \ProjectionIndexSubset\np{\Primal} \) belongs to
      the image of the orthogonal projection~\( \ProjectionIndexSubset \),
      hence \( \primal\opt=  \ProjectionIndexSubset\primal\opt \),}       
       \iff
     &
       \primal\opt \in \ProjectionIndexSubset\np{\Primal} \mtext{ and }
       \nscalv{\primal\opt}{\ProjectionIndexSubset\dual} \geq \nscalv{\primalbis}{\ProjectionIndexSubset\dual}
       \eqsepv \forall \primalbis\in\Primal
       \tag{as the orthogonal projection~\( \ProjectionIndexSubset \) is
       self-adjoint, see~\eqref{eq:orthogonal_projection_self-dual}}
    \\
    \iff
    &
      \primal\opt \in \ProjectionIndexSubset\np{\Primal} \mtext{ and }
      \primal\opt\in \argmax_{\primalbis\in\Primal}\nscalv{\primalbis}{\ProjectionIndexSubset\dual}
       \tag{by definition of~\(\argmax_{\primalbis\in\Primal}\nscalv{\primalbis}{\ProjectionIndexSubset\dual}\)}
     \\
    \iff
    &
      \primal\opt \in \ProjectionIndexSubset\np{\Primal} \mtext{ and }
      \ProjectionIndexSubset\primal\opt\in\ProjectionIndexSubset
      \Bp{ \argmax_{\primalbis\in\Primal}\nscalv{\primalbis}{\ProjectionIndexSubset\dual}}
    \intertext{where $\implies$ follows by applying the
      mapping~$\ProjectionIndexSubset$ to \( \primal\opt\in
      \argmax_{\primalbis\in\Primal}\nscalv{\primalbis}{\ProjectionIndexSubset\dual}\);
      regarding $\impliedby$, let $\primal'\in
      \argmax_{\primalbis\in\Primal}\nscalv{\primalbis}{\ProjectionIndexSubset\dual}$
      be such that
      $\ProjectionIndexSubset\primal\opt=\ProjectionIndexSubset \primal'$; 
      as $\nscalv{\primalbis}{\ProjectionIndexSubset\dual}=
      \nscalv{\ProjectionIndexSubset\primalbis}{\ProjectionIndexSubset\dual}$,
      we deduce that $\ProjectionIndexSubset\primal'\in
      \argmax_{\primalbis\in\Primal}\nscalv{\primalbis}{\ProjectionIndexSubset\dual}$,
      hence that $\ProjectionIndexSubset\primal\opt=\ProjectionIndexSubset \primal'\in
      \argmax_{\primalbis\in\Primal}\nscalv{\primalbis}{\ProjectionIndexSubset\dual}$;
      as \( \primal\opt=  \ProjectionIndexSubset\primal\opt \), we conclude that
      \( \primal\opt\in
      \argmax_{\primalbis\in\Primal}\nscalv{\primalbis}{\ProjectionIndexSubset\dual} \),}
    \iff
    &
      \primal\opt \in \ProjectionIndexSubset\np{\Primal} \mtext{ and }
      \primal\opt\in\ProjectionIndexSubset\Bp{
      \argmax_{\primalbis\in\Primal}\nscalv{\primalbis}{\ProjectionIndexSubset\dual}}
      \tag{as \( \primal\opt=  \ProjectionIndexSubset\primal\opt \).}            
  \end{align*}
  \bigskip
  Finally~\eqref{lem_eq:Support_identification_max} holds true because,
  as the orthogonal projection~\( \ProjectionIndexSubset \) is self-adjoint
  (see~\eqref{eq:orthogonal_projection_self-dual}),
  \[ 
    \max_{\primal\in\ProjectionIndexSubset\np{\Primal}}\nscalv{\primal}{\dual}
    =
    \max_{\primalbis\in\Primal}\nscalv{\ProjectionIndexSubset\primalbis}{\dual}
    =     \max_{\primalbis\in\Primal}\nscalv{\primalbis}{\ProjectionIndexSubset\dual}
    =\SupportFunction{\closedconvexhull\Primal}\np{\ProjectionIndexSubset\dual}
    \eqfinv
  \]
  by definition~\eqref{eq:support_function} and the well-known property
  \( \SupportFunction{\Primal}=\SupportFunction{\closedconvexhull\Primal} \) 
  of the support function~\( \SupportFunction{\Primal} \).
\end{proof}

\subsection{Proof of Theorem~\ref{th:Support_identification}}

\begin{proof}
  Consider a number \( k \) in \( \Vset \), a (primal) nonempty subset \( \Primal \) of \( \PRIMAL\),
  and a (dual) vector \( \dual \) in \( \DUAL \). First observe that
  \begin{equation*}
  \emptyset \neq  \OptimalSupports{\Primal,k}{\dual}
  \subset \ba{ \IndexSubset \subset\Vset, \cardinal{\IndexSubset}\leq k} 
  \subset 2^{\Vset}.
  \end{equation*}

  Second, the last inclusion in Equation~\eqref{eq:sparse_atomic_set} follows from
  the definition~\eqref{eq:sparse_atomic_set} of~$\AtomicSet$
  as, by~\eqref{eq:LevelSet_lzero_FlatRR_IndexSubset},
  \begin{equation*}
  \AtomicSet
  = \bigcup_{\cardinal{\IndexSubset}\leq k}
  \ProjectionIndexSubset\np{\Primal}
  \subset \bigcup_{\cardinal{\IndexSubset}\leq k} \FlatRR_{\IndexSubset}
  = {\LevelSet{\lzero}{k}}
  \end{equation*}

  Third, we prove~\eqref{eq:Support_identification}.
  We have that
  \begin{align*}
    \primal\opt \in {\Atom_{k}}
    \cap
    \ExposedFace(\closedconvexhull{\Atom_{k}},\dual)
    &\iff
      \primal\opt \in \argmax_{\primal\in\Atom_{k}} \nscalv{\primal}{\dual}
      \intertext{as \( \ExposedFace(\closedconvexhull{\Atom_{k}},\dual)=
      \argmax_{\primal\in\closedconvexhull{\Atom_{k}}}\nscalv{\primal}{\dual}\) and
      \(  \max_{\primal\in\Atom_{k}}      \proscal{\primal}{\dual}=
      \max_{\primal\in\closedconvexhull{\Atom_{k}}}\proscal{\primal}{\dual} \),
      }
      &\iff 
        \primal\opt \in {\Atom_{k}} \text{ and }
        \nscalv{\primal\opt}{\dual} =
        \max_{\primal\in\bigcup_{\cardinal{\IndexSubset}\leq k}
        \ProjectionIndexSubset\np{\Primal} }\nscalv{\primal}{\dual}
        \intertext{using the definition~\eqref{eq:sparse_atomic_set} of~$\Atom_{k}$
        and where, in all this proof, the subscript \( \cardinal{\IndexSubset}\leq k \)
        has to be understood as \( \IndexSubset \subset\Vset, \cardinal{\IndexSubset}\leq k \), }
        & \iff 
          \primal\opt \in {\Atom_{k}} \text{ and }
          \nscalv{\primal\opt}{\dual} = \max_{\cardinal{\IndexSubset}\leq k}
          \max_{\primal\in\ProjectionIndexSubset\np{\Primal} }\nscalv{\primal}{\dual}
    \\
    & \iff
      \primal\opt \in {\Atom_{k}} \text{ and there exists }
      \IndexSubset\opt \subset \Vset \eqsepv \cardinal{\IndexSubset\opt}\leq k
      \text{ such that}  
    \\
    &\hphantom{\iff\text{ }}
      \primal\opt \in \pi_{\IndexSubset\opt}\np{\Primal} \text{ and }
      \nscalv{\primal\opt}{\dual} = \max_{\cardinal{\IndexSubset}\leq k}
      \max_{\primal\in\ProjectionIndexSubset\np{\Primal} }\nscalv{\primal}{\dual}
      \intertext{because, by definition~\eqref{eq:sparse_atomic_set}
      of~$\Atom_{k}$, \( \primal\opt \in \bigcup_{\cardinal{\IndexSubset}\leq k}
      \ProjectionIndexSubset\np{\Primal} \),
      hence there exists \( \IndexSubset\opt \subset \Vset \) with \(
      \cardinal{\IndexSubset\opt}\leq k\)
      such that \( \primal\opt \in \pi_{\IndexSubset\opt}\np{\Primal} \), }
    & \iff
      \primal\opt \in {\Atom_{k}} \text{ and there exists }
      \IndexSubset\opt \subset \Vset \eqsepv \cardinal{\IndexSubset\opt}\leq k
      \text{ such that}  
    \\
    &\hphantom{\iff\text{ }}
      \primal\opt \in \pi_{\IndexSubset\opt}\np{\Primal} \text{ and }
      \nscalv{\primal\opt}{\dual} = \max_{\cardinal{\IndexSubset}\leq k}
      \max_{\primal\in\ProjectionIndexSubset\np{\Primal} }\nscalv{\primal}{\dual}
    \\
    &\hphantom{\iff\text{ }}\text{ and }
      \IndexSubset\opt \in
      \argmax_{\cardinal{\IndexSubset}\leq k}
      \max_{\primal\in\ProjectionIndexSubset\np{\Primal}}\nscalv{\primal}{\dual}
      \tag{by definition of \( \argmax_{\cardinal{\IndexSubset}\leq k}
      \max_{\primal\in\ProjectionIndexSubset\np{\Primal}}\nscalv{\primal}{\dual} \)}
    \\
    & \iff
      \primal\opt \in {\Atom_{k}} \text{ and there exists }
      \IndexSubset\opt \in
      \argmax_{\cardinal{\IndexSubset}\leq k}
      \max_{\primal\in\ProjectionIndexSubset\np{\Primal}}\nscalv{\primal}{\dual}
      \text{ such that}  
    \\
    &\hphantom{\iff\text{ }}
      \primal\opt \in \pi_{\IndexSubset\opt}\np{\Primal} \text{ and }
      \nscalv{\primal\opt}{\dual} = \max_{\primal\in\pi_{\IndexSubset\opt}\np{\Primal} }\nscalv{\primal}{\dual}
      \intertext{because \( \IndexSubset\opt \in\argmax_{\cardinal{\IndexSubset}\leq k}
      \max_{\primal\in\ProjectionIndexSubset\np{\Primal}}\nscalv{\primal}{\dual}
      \)
      and by definition of \( \argmax_{\cardinal{\IndexSubset}\leq k}
      \max_{\primal\in\ProjectionIndexSubset\np{\Primal}}\nscalv{\primal}{\dual} \),
      }
      & \iff
      \text{there exists }
      \IndexSubset\opt \in
      \argmax_{\cardinal{\IndexSubset}\leq k}
      \max_{\primal\in\ProjectionIndexSubset\np{\Primal}}\nscalv{\primal}{\dual}
    \\
    &\hphantom{\iff\text{ }}\text{such that }
      \primal\opt \in \argmax_{\primal\in\pi_{\IndexSubset\opt}\np{\Primal}}\nscalv{\primal}{\dual}
      \intertext{because \( \primal\opt \in 
      \argmax_{\primal\in\pi_{\IndexSubset\opt}\np{\Primal}}\nscalv{\primal}{\dual}
      \)
      implies that \( \primal\opt \in \pi_{\IndexSubset\opt}\np{\Primal} \),
      hence that \( \primal\opt \in {\Atom_{k}} \), by Definition~\eqref{eq:sparse_atomic_set}
      of~$\Atom_{k}$,}
      &\iff\text{there exists }
        \IndexSubset\opt \in
        \argmax_{\cardinal{\IndexSubset}\leq k}
        \max_{\primal\in\ProjectionIndexSubset\np{\Primal}}\nscalv{\primal}{\dual}
    \\
    &\hphantom{\iff\text{ }}\text{such that }
      \primal\opt \in
      \pi_{\IndexSubset\opt}\Bp{\argmax_{\primalbis\in\Primal}\nscalv{\primalbis}{\pi_{\IndexSubset\opt}\dual}}
      \tag{by~\eqref{lem_eq:Support_identification_argmax}}
      \eqfinv
      \\
      &\iff\text{there exists }
        \IndexSubset\opt \in
        \argmax_{\cardinal{\IndexSubset}\leq k}
        \SupportFunction{\closedconvexhull\Primal}\np{\ProjectionIndexSubset\dual}
        \tag{as \( \max_{\primal\in\ProjectionIndexSubset\np{\Primal}}\nscalv{\primal}{\dual} =
        \SupportFunction{\closedconvexhull\Primal}\np{\ProjectionIndexSubset\dual} \)
        by~\eqref{lem_eq:Support_identification_max}}
    \\
    &\hphantom{\iff\text{ }}\text{such that }
      \primal\opt \in
      \pi_{\IndexSubset\opt}\Bp{\Primal \cap
    \ExposedFace(\closedconvexhull\Primal,\pi_{\IndexSubset\opt}\dual)}
      \intertext{as \( \argmax_{\primalbis\in\Primal}\nscalv{\primalbis}{\pi_{\IndexSubset\opt}\dual}=
    \Primal \cap
    \ExposedFace(\closedconvexhull\Primal,\pi_{\IndexSubset\opt}\dual) \) by
      definition~\eqref{eq:ExposedFace} of the exposed face \(
  \ExposedFace(\closedconvexhull\Primal,\pi_{\IndexSubset\opt}\dual) \),}
    &\iff
      \primal\opt \in \Bsetco{\ProjectionIndexSubsetOpt\bp{\Primal \cap
      \ExposedFace(\closedconvexhull\Primal,\ProjectionIndexSubsetOpt\dual)}}
      {\IndexSubset\opt \in \OptimalSupports{\Primal,k}{\dual}}
      \tag{by using the notation~\eqref{eq:OptimalSupports}
      \( \OptimalSupports{\Primal,k}{\dual}=
      \argmax_{\cardinal{\IndexSubset}\leq k}
      \SupportFunction{\closedconvexhull\Primal}\np{\ProjectionIndexSubset\dual}
      \) }
      \eqfinp
  \end{align*}
  Thus, we have proven the equality~\eqref{eq:Support_identification}.\\\\
  We now prove Equation~\eqref{eq:Face_description} which can be rewritten as 
  \[
    \ExposedFace\bp{\closedconvexhull\AtomicSet,\dual}=
    \closedconvexhull\widehat{F}
    \mtext{ where } 
    \widehat{F} = 
    \bsetco{\pi_{\IndexSubset\opt}\bp{\Primal \cap
        \ExposedFace(\closedconvexhull\Primal,\pi_{\IndexSubset\opt}\dual)}}
    {\IndexSubset\opt \in \OptimalSupports{\Primal,k}{\dual} }
  \]
  is the right-hand side term in equality~\eqref{eq:Support_identification},
  which now writes
  \( \AtomicSet \cap \ExposedFace\bp{\closedconvexhull\AtomicSet,\dual} =
  \widehat{F} \).
  
  First, we have that
  \( \ExposedFace\bp{\closedconvexhull\AtomicSet,\dual} \supset \AtomicSet \cap
  \ExposedFace\bp{\closedconvexhull\AtomicSet,\dual} = \widehat{F} \), where the
  last equality is Equation~\eqref{eq:Support_identification}.
  An exposed face is closed convex. Thus,
  $\closedconvexhull \widehat{F} \subset
  \ExposedFace(\closedconvexhull\Atom_{k},\dual)$.  Similarly, since
  $\ExposedFace(\closedconvexhull\Atom_{k},\dual)$ is exposed, an extreme
  point~$e$ of $\ExposedFace(\closedconvexhull\Atom_{k},\dual)$ is also an
  extreme point of $\closedconvexhull\Atom_{k}$. Thus, it is also contained
  in~$\Atom_{k}$.  Using
  \( \AtomicSet \cap \ExposedFace\bp{\closedconvexhull\AtomicSet,\dual} =
  \widehat{F} \) (new form of Equation~\eqref{eq:Support_identification}), we
  get that $e \in \widehat{F}$. Thus, as $\widehat{F}$ contains all the extreme
  points of $\ExposedFace(\closedconvexhull\Atom_{k},\dual)$ and a convex set is
  the convex hull of its extreme points, we obtain that
  $\ExposedFace(\closedconvexhull\Atom_{k},\dual) \subset \closedconvexhull
  \widehat{F}$.  Third, from
  $\closedconvexhull \widehat{F} \subset
  \ExposedFace(\closedconvexhull\Atom_{k},\dual)$ and
  $\ExposedFace(\closedconvexhull\Atom_{k},\dual) \subset \closedconvexhull
  \widehat{F}$, we conclude that
  $\ExposedFace(\closedconvexhull\Atom_{k},\dual) = \closedconvexhull
  \widehat{F}$, finally giving Equation~\eqref{eq:Face_description}.
\end{proof}

\subsection{Proof of Corollary~\ref{cor:Support_identification}}

\begin{proof}
  The implication~\eqref{eq:Support_identification_Support_atom_opt} is a
  consequence of~\eqref{eq:Support_identification}.
  Indeed, as
  \begin{equation*}
    \pi_{\IndexSubset\opt}\bp{\Primal \cap
      \ExposedFace(\closedconvexhull\Primal,\pi_{\IndexSubset\opt}\dual)} \subset
    \FlatRR_{\IndexSubset\opt},
  \end{equation*}
  the support of any point 
  \( \atomopt \) in \( \Atom_{k} \cap
  \ExposedFace(\closedconvexhull\Atom_{k},\dual) \) is
  included in one of the subsets
  \( \IndexSubset\opt \) contained in \( \OptimalSupports{\Primal,k}{\dual} \)
  by~\eqref{eq:Support_identification}.\\\\
  Implication~\eqref{eq:Support_identification_Support_atom_opt} can be
  interpreted as follows: since \( \atomopt \in \Atom_{k} \), it follows
  from~\eqref{eq:sparse_atomic_set} that
  \begin{equation*} 
    \atomopt \in \bigcup_{\cardinal{\IndexSubset}\leq k} \FlatRR_{\IndexSubset}
  \end{equation*}
  and since \( \atomopt \) belongs to
  \( \ExposedFace(\closedconvexhull\Atom_{k},\dual) \), we can be more precise
  and obtain from~\eqref{eq:Support_identification} that
  \begin{equation*}
    \atomopt \in \pi_{\IndexSubset\opt}\bp{\Primal \cap
      \ExposedFace(\closedconvexhull\Primal,\pi_{\IndexSubset\opt}\dual)}.
  \end{equation*}
  As a consequence, $\atomopt$ belongs to $\FlatRR_{\IndexSubset\opt}$ or, equivalently,
  \( \Support{\atomopt} \) is a subset of \( \IndexSubset\opt \); the possible supports
  of~$\atomopt$ are the
  $\IndexSubset\opt\in \OptimalSupports{\Primal,k}{\dual} $, determined by the
  dual vector~\( \dual \) by means of~\eqref{eq:OptimalSupports}.\\\\
  Implication~\eqref{eq:Support_identification_Support_primal_opt} is a
  direct consequence
  of~\eqref{eq:Face_description}. 
  Indeed, as any
  \( \primal \) in \( \ExposedFace(\closedconvexhull\Atom_{k},\dual) \) can be expressed as a
  convex combination of elements of~$\FlatRR_{\IndexSubset\opt}$, with
  $\IndexSubset\opt$ in $\OptimalSupports{\Primal,k}{\dual}$,
  the support of~\( \primal \) is necessarily a subset of
  \begin{equation*}
    \bigcup_{ \IndexSubset\opt \in \OptimalSupports{\Primal,k}{\dual}}
    \IndexSubset\opt
  \end{equation*}
  as desired.
\end{proof}

\section{Summary table of generalized top-$k$ and $k$-support norms and dual~norms}
\label{Summary_table_on_generalized_top-k_and_k-support_norms}

In Table~\ref{tab:generalized_support_norms}, we provide a summary of
generalized top-$k$ and $k$-support norms,
and top-$k$ and $k$-support dual~norms,
that appeared in \cite{Chancelier-DeLara:2022_OSM_JCA,
   Chancelier-DeLara:2022_SVVA}; they rely on restriction norms (see
 Table~\ref{tab:Restriction_norms}).
 
\begin{table}[hbtp]
  \centering
  \begin{tabular}{c|c|c|c}  
    \hline\hline 
    norm 
    & unit ball 
      & polar unit ball 
    & dual norm 
    \\
    (on \( \RR^{\spacedim} \))
    & (\( \subset \RR^{\spacedim} \))
      & (\( \subset \RR^{\spacedim} \))
    & (on \( \RR^{\spacedim} \))
    \\
    \hline
  \( \TripleNorm{\cdot} \)
    & \( \TripleNormBall \) 
      & \( \TripleNormDualBall = \PolarSet{\TripleNormBall} \)
    & \( \TripleNormDual{\cdot} \)
  \\
    \hline\hline\hline 
     norm 
    & unit ball
      & polar unit ball
    & dual norm
    \\
   (on~$\FlatRR_{K}$)
    &  (\( \subset \FlatRR_{K} \))
    &  (\( \subset \FlatRR_{K} \))
    &    (on~$\FlatRR_{K}$)
    \\
    \hline\hline
restriction of \( \TripleNorm{\cdot} \) to $\FlatRR_{K}$ &&&
    \\
    \( \TripleNorm{\cdot}_{K} \)
    & \( \FlatRR_{K} \cap \TripleNormBall \) 
      & \( \projection_{K}\np{ \TripleNormDualBall } \)
    & \( \TripleNorm{\cdot}_{K,\star} \)
    \\
    & straightforward
      & deduced from 
    &
    \\
    && \cite[Eq.~(13a)]{Chancelier-DeLara:2022_OSM_JCA} &
        \\
    \hline
dual norm of \( \TripleNorm{\cdot}_{K} \) (on $\FlatRR_{K}$) &&&
    \\
    \( \TripleNorm{\cdot}_{K,\star} \)
     & \( \projection_{K}\np{ \TripleNormDualBall } \)
      & \( \FlatRR_{K} \cap \TripleNormBall \)
    & \( \TripleNorm{\cdot}_{K} \)
    \\
    & by polarity & by polarity & 
        \\
    & 
      &   \cite[Eq.~(13)]{Chancelier-DeLara:2022_CAPRA_OPTIMIZATION}        &
    \\
    & & \cite[Eq.~(13b)]{Chancelier-DeLara:2022_OSM_JCA}         &
    \\
    \hline\hline
restriction of \( \TripleNormDual{\cdot} \) to $\FlatRR_{K}$ &&&
    \\
    \( \TripleNorm{\cdot}_{\star,K} \)
     & \( \FlatRR_{K} \cap \TripleNormDualBall \) 
      &  \( \projection_K\np{\TripleNormBall} \) 
    & \( \TripleNorm{\cdot}_{\star,K,\star} \)
    \\
    &
      & \cite[Eq.~(13a)]{Chancelier-DeLara:2022_OSM_JCA}
    &
      \\
    \hline
 dual norm of \( \TripleNorm{\cdot}_{\star,K} \) (on $\FlatRR_{K}$) &&&
    \\   \( \TripleNorm{\cdot}_{\star,K,\star} \)
       & \( \projection_K\np{\TripleNormBall} \) 
     &  \( \FlatRR_{K} \cap \TripleNormDualBall \) 
   & \( \TripleNorm{\cdot}_{\star,K} \)
    \\
    \hline\hline 
  \end{tabular}
  \caption{Restriction norms on \( \FlatRR_{K} \) in~\eqref{eq:FlatRR}, where $K
    \subset \ic{1,\spacedim}$ (see
    \cite[\S2.2.1]{Chancelier-DeLara:2022_SVVA},
    \cite[Definition~3.1]{Chancelier-DeLara:2022_CAPRA_OPTIMIZATION},
    \cite[Definition~1)]{Chancelier-DeLara:2022_OSM_JCA})} 
  \label{tab:Restriction_norms}
\end{table}

\begin{table}[hbtp]
  \centering
  \small
  \begin{tabular}{c|c|c|c}  
    \hline\hline 
    source norm 
    & unit ball 
    & polar unit ball 
    & dual norm 
    \\
    (on \( \RR^{\spacedim} \))
    & (\( \subset \RR^{\spacedim} \))
    & (\( \subset \RR^{\spacedim} \))
    & (on \( \RR^{\spacedim} \))
    \\
    \hline
    \( \TripleNorm{\cdot} \)
    & \( \TripleNormBall \) 
    & \( \TripleNormDualBall = \PolarSet{\TripleNormBall} \)
    & \( \TripleNormDual{\cdot} \)
    \\
    \hline\hline\hline
    norm 
    & unit ball
    & polar unit ball
    & dual norm
    \\
    (on \( \RR^{\spacedim} \))
    & (\( \subset \RR^{\spacedim} \))
    & (\( \subset \RR^{\spacedim} \))
    & (on \( \RR^{\spacedim} \))
    \\
    \hline\hline
    top-$k$ norm & & & $k$-support
    \\
    \( \TopNorm{\TripleNorm{\cdot}}{k} \)
    & \( \intercardK \Converse{\projection_{K}} \np{\FlatRR_{K} \cap \TripleNormBall} \)
    &  \( \closedconvexhull\bp{ \unioncardK 
      \projection_{K}\np{ \TripleNormDualBall } } \)
    & norm 
    \\
    \(= \supcardK \TripleNorm{\projection_K\cdot}_K \) 
    & straightforward & by polarity
    &
      \( \SupportNorm{\TripleNorm{\cdot}}{k} \) 
    \\
    \cite[Eq.~(14)]{Chancelier-DeLara:2022_OSM_JCA}
    & 
      \( =\intercardK \PolarSet{\bp{\projection_K\np{\TripleNormDualBall}}} \) 
    &
    &
    \\
    &  easily proved
    &
    &
    \\
    \hline
    $k$-support norm &&& top-$k$ norm
    \\
    \( \SupportNorm{\TripleNorm{\cdot}}{k} \)
    & \( \closedconvexhull\bp{\unioncardK
      \projection_{K}\np{ \TripleNormDualBall }  } \)
    &
      \(\intercardK \Converse{\projection_{K}} \np{\FlatRR_{K} \cap \TripleNormBall} \)
    & \( \TopNorm{\TripleNorm{\cdot}}{k} \) 
    \\
    \cite[Eq.~(15)]{Chancelier-DeLara:2022_OSM_JCA}
    & \cite[Eq.~(22)]{Chancelier-DeLara:2022_OSM_JCA}                  
    &     \( = \intercardK \PolarSet{\bp{\projection_K\np{\TripleNormDualBall}}} \) 
    &
    \\
    & & by polarity &
    \\
    \hline\hline
    top-$k$ dual~norm & & & $k$-support 
    \\
    \( \TopDualNorm{\TripleNorm{\cdot}}{k}
    \)
    & \(\intercardK \Converse{\projection_{K}} \np{\FlatRR_{K} \cap \TripleNormDualBall} \)
    &  \( \closedconvexhull\bp{\unioncardK 
      \projection_{K} \np{ \TripleNormBall } } \)
    &  dual~norm 
    \\
    \( =
    \supcardK \TripleNorm{\projection_K\cdot}_{\star,K} \)
    &      \( = \intercardK \PolarSet{\bp{\projection_K\np{\TripleNormBall}}} \) 
    &
    &
      \( \SupportDualNorm{\TripleNorm{\cdot}}{k} \)
    \\
    \cite[Eq.~(10)]{Chancelier-DeLara:2022_SVVA}
    &
    &
    &
    \\
    \hline
    \hline
    $k$-support dual~norm &&& top-$k$
    \\
    \( \SupportDualNorm{\TripleNorm{\cdot}}{k} \)
    & \( \closedconvexhull\bp{\unioncardK 
      \projection_{K} \np{ \TripleNormBall } } \)
    & \(\intercardK  \Converse{\projection_{K}} \np{\FlatRR_{K} \cap
      \TripleNormDualBall} \)
    &    dual~norm
    \\
    \cite[Eq.~(11)]{Chancelier-DeLara:2022_SVVA}
    &
    & \( = \intercardK 
      \PolarSet{\bp{\projection_K\np{\TripleNormBall}}} \)
    &
      \( \TopDualNorm{\TripleNorm{\cdot}}{k} \)
    \\
    \hline\hline
  \end{tabular}
  \caption{Generalized top-$k$ norms, $k$-support norms  (see
    \cite[Definition~9]{Chancelier-DeLara:2022_OSM_JCA}),
    and top-$k$ dual~norms,
    $k$-support dual~norms (see
    \cite[Definition~3]{Chancelier-DeLara:2022_SVVA}),
    indexed by $k \in \ic{1,\spacedim}$.
    We immediately see that the generalized
    top-$k$ dual~norms and $k$-support dual~norms --- deduced from a source norm --- 
    are exactly the generalized top-$k$ norms and $k$-support norms  --- but deduced
    from the dual norm of the source norm.}
  \label{tab:generalized_support_norms}
\end{table} 

\newcommand{\noopsort}[1]{} \ifx\undefined\allcaps\def\allcaps#1{#1}\fi


\begin{thebibliography}{10}

\bibitem{Argyriou-Foygel-Srebro:2012}
A.~Argyriou, R.~Foygel, and N.~Srebro.
\newblock Sparse prediction with the $k$-support norm.
\newblock In {\em Proceedings of the 25th International Conference on Neural
  Information Processing Systems - Volume 1}, NIPS'12, pages 1457--1465, USA,
  2012. Curran Associates Inc.

\bibitem{Bach-Jenatton-Mairal-Obozinski:2012}
F.~Bach, R.~Jenatton, J.~Mairal, and G.~Obozinski.
\newblock Optimization with sparsity-inducing penalties.
\newblock {\em Foundations and Trends\textsuperscript{\textregistered} in
  Machine Learning}, 4(1):1--106, January 2012.

\bibitem{Bauer-Stoer-Witzgall:1961}
F.~L. Bauer, J.~Stoer, and C.~Witzgall.
\newblock Absolute and monotonic norms.
\newblock {\em Numer. Math.}, 3:257--264, 1961.

\bibitem{Bauschke-Combettes:2017}
H.~H. Bauschke and P.~L. Combettes.
\newblock {\em Convex analysis and monotone operator theory in {H}ilbert
  spaces}.
\newblock CMS Books in Mathematics/Ouvrages de Math\'ematiques de la SMC.
  Springer-Verlag, New York, second edition, 2017.

\bibitem{Boyer-Chambolle-DeCastro-Duval-deGournay-Weiss:2019}
C.~Boyer, A.~Chambolle, Y.~{De Castro}, V.~Duval, F.~{de Gournay}, and
  P.~Weiss.
\newblock On representer theorems and convex regularization.
\newblock {\em SIAM Journal on Optimization}, 29(2):1260--1281, 2019.

\bibitem{Candes:2008}
E.~J. Cand{\`e}s.
\newblock The restricted isometry property and its implications for compressed
  sensing.
\newblock {\em Comptes Rendus Math{\'e}matique}, 346(9):589--592, 2008.

\bibitem{Candes:ICM2014}
E.~J. Cand\`es.
\newblock Mathematics of sparsity (and a few other things).
\newblock In {\em Proceedings of the International Congress of Mathematicians},
  2014.

\bibitem{Chancelier-DeLara:2022_SVVA}
J.-P. Chancelier and M.~{De Lara}.
\newblock Capra-convexity, convex factorization and variational formulations
  for the $\ell_0$ pseudonorm.
\newblock {\em Set-Valued and Variational Analysis}, 30:597--619, 2022.

\bibitem{Chancelier-DeLara:2022_CAPRA_OPTIMIZATION}
J.-P. Chancelier and M.~{De Lara}.
\newblock Constant along primal rays conjugacies and the $\ell_0$ pseudonorm.
\newblock {\em Optimization}, 71(2):355--386, 2022.

\bibitem{Chancelier-DeLara:2022_OSM_JCA}
J.-P. Chancelier and M.~{De Lara}.
\newblock Orthant-strictly monotonic norms, generalized top-$k$ and $k$-support
  norms and the $\ell_0$ pseudonorm.
\newblock {\em Journal of Convex Analysis}, 30(3):743--769, 2023.

\bibitem{Chandrasekaran-Recht-Parrilo-Willsky:2012}
V.~Chandrasekaran, B.~Recht, P.~A. Parrilo, and A.~S. Willsky.
\newblock The convex geometry of linear inverse problems.
\newblock {\em Foundations of Computational Mathematics}, 12(6):805--849, 2012.

\bibitem{DezaHiriart-UrrutyPournin2021}
A.~Deza, J.-B. Hiriart-Urruty, and L.~Pournin.
\newblock Polytopal balls arising in optimization.
\newblock {\em Contributions to Discrete Mathematics}, 16(3):125--138, 2021.

\bibitem{Drusvyatskiy-Vavasis-Wolkowicz:2015}
D.~Drusvyatskiy, S.~A. Vavasis, and H.~Wolkowicz.
\newblock Extreme point inequalities and geometry of the rank sparsity ball.
\newblock {\em Mathematical Programming}, 152:521--544, 2015.

\bibitem{Fadili-Malick-Peyre:2018}
J.~Fadili, J.~Malick, and G.~Peyr\'{e}.
\newblock Sensitivity analysis for mirror-stratifiable convex functions.
\newblock {\em SIAM Journal on Optimization}, 28(4):2975--3000, 2018.

\bibitem{Fan-Jeong-Sun-Friedlander:2020}
Z.~Fan, H.~Jeong, Y.~Sun, and M.~P. Friedlander.
\newblock Atomic decomposition via polar alignment.
\newblock {\em Foundations and Trends\textsuperscript{\textregistered} in
  Optimization}, 3(4):280--366, 2020.

\bibitem{GabrielovGelfandLosik1975}
A.~M. Gabri{\'e}lov, I.~M. Gel'fand, and M.~V. Losik.
\newblock Combinatorial computation of characteristic classes.
\newblock {\em Functional Analysis and its Applications}, 9:103--115, 1975.

\bibitem{GelfandGoreskyMacPhersonSerganova1987}
I.~M. Gel'fand, M.~Goresky, R.~D. MacPherson, and V.~Serganova.
\newblock Combinatorial geometries, convex polyhedra and {Schubert} cells.
\newblock {\em Advances in Mathematics}, 63:301--316, 1987.

\bibitem{Gotoh-Takeda-Tono:2018}
J.-Y. Gotoh, A.~Takeda, and K.~Tono.
\newblock {DC} formulations and algorithms for sparse optimization problems.
\newblock {\em Mathematical Programming}, 169(1):141--176, 2018.

\bibitem{Gotoh-Uryasev:2016}
J.-Y. Gotoh and S.~Uryasev.
\newblock Two pairs of families of polyhedral norms versus $\ell _p$ -norms:
  proximity and applications in optimization.
\newblock {\em Mathematical Programming}, 156:391--431, 2016.

\bibitem{Gries:1967}
D.~Gries.
\newblock Characterization of certain classes of norms.
\newblock {\em Numerische Mathematik}, 10:30--41, 1967.

\bibitem{Gries-Stoer:1967}
D.~Gries and J.~Stoer.
\newblock Some results on fields of values of a matrix.
\newblock {\em SIAM Journal on Numerical Analysis}, 4(2):283--300, 1967.

\bibitem{Jenatton-Audibert-Bach:2011}
R.~Jenatton, J.-Y. Audibert, and F.~Bach.
\newblock Structured variable selection with sparsity-inducing norms.
\newblock {\em Journal of Machine Learning Research}, 12(84):2777--2824, 2011.

\bibitem{LeFranc-Chancelier-DeLara:2022}
A.~{Le Franc}, J.-P. Chancelier, and M.~{De Lara}.
\newblock The capra-subdifferential of the $\ell_0$ pseudonorm.
\newblock {\em Optimization}, 73(4):1229--1251, 2024.

\bibitem{McDonald-Pontil-Stamos:2016}
A.~M. McDonald, M.~Pontil, and D.~Stamos.
\newblock New perspectives on $k$-support and cluster norms.
\newblock {\em Journal of Machine Learning Research}, 17(155):1--38, 2016.

\bibitem{Mirsky:1960}
L.~Mirsky.
\newblock Symmetric gauge functions and unitarily invariant norms.
\newblock {\em The Quarterly Journal of Mathematics}, 11(1):50--59, 1960.

\bibitem{Obozinski-Bach:hal-01412385}
G.~Obozinski and F.~Bach.
\newblock {A unified perspective on convex structured sparsity: Hierarchical,
  symmetric, submodular norms and beyond}.
\newblock Preprint {\verb!hal-01412385!}, 2016.

\bibitem{Pournin2023}
L.~Pournin.
\newblock Shallow sections of the hypercube.
\newblock {\em Israel Journal of Mathematics}, 255:685--704, 2023.

\bibitem{Pournin2024}
L.~Pournin.
\newblock Local extrema for hypercube sections.
\newblock {\em Journal d'Analyse Math{\'e}matique}, 152:557--594, 2024.

\bibitem{Rockafellar-Wets:1998}
R.~T. Rockafellar and R.~J.-B. Wets.
\newblock {\em Variational Analysis}.
\newblock Springer-Verlag, Berlin, 1998.

\bibitem{Schneider:2014}
R.~Schneider.
\newblock {\em {Convex bodies: the {Brunn}-{Minkowski} theory}}.
\newblock Cambridge University Press, second edition, 2014.

\bibitem{Tibshirani:1996}
R.~Tibshirani.
\newblock Regression shrinkage and selection via the lasso.
\newblock {\em Journal of the Royal Statistical Society. Series B
  (Methodological)}, 58(1):267--288, 1996.

\bibitem{Wu-Ding-Sun-Toh:2014}
B.~Wu, C.~Ding, D.~Sun, and K.-C. Toh.
\newblock On the {Moreau-Yosida} regularization of the vector $k$-norm related
  functions.
\newblock {\em SIAM Journal on Optimization}, 24(2):766--794, 2014.

\bibitem{Ziegler1995}
G.~M. Ziegler.
\newblock {\em Lectures on Polytopes}, volume 152 of {\em Graduate Texts in
  Mathematics}.
\newblock Springer, 1995.

\end{thebibliography}
\end{document}